\theoremstyle{plain}
\newtheorem{thm}[subsection]{Theorem}
\newtheorem{lem}[subsection]{Lemma}
\newtheorem{prop}[subsection]{Proposition}
\newtheorem{cor}[subsection]{Corollary}
\newtheorem{ex}[subsection]{Example}
\newtheorem{rem}[subsection]{Remark}
\newtheorem{defn}[subsection]{Definition}
\def\equivpiece{ncyn}
\def\onetriangle{ngrn}
\newcommand{\ga}{\alpha}
\newcommand{\gD}{\Delta}
\newcommand{\gl}{\lambda}
\newcommand{\gk}{\kappa}
\newcommand{\gm}{\mu}
\newcommand{\gn}{\nu}
\newcommand{\gx}{\xi}
\newcommand{\go}{\omega}
\newcommand{\gP}{\Phi}
\newcommand{\gr}{\rho}
\newcommand{\gs}{\sigma}
\newcommand{\bd}{\partial}
\newcommand{\lmn}{{_{\gl,\gm}^\gn}}
\newcommand{\lmnp}{{_{\gl,\gm}^{\gn\,+}}}
\newcommand{\gmk}{{\gm/\gk}}
\newcommand{\cB}{\mathcal{B}}
\newcommand{\cH}{\mathcal{H}}
\newcommand{\cL}{\mathcal{L}}
\newcommand{\cP}{\mathcal{P}}
\newcommand{\cR}{\mathcal{R}}
\newcommand{\fa}{a}
\newcommand{\fb}{b}
\newcommand{\bZ}{\mathbb{Z}}
\newcommand{\bN}{\mathbb{N}}
\newcommand{\bC}{\mathbb{C}}
\newcommand{\sgn}{\mathop{\rm sgn}\nolimits}
\newcommand{\xy}{(x\,|\,y)}
\newcommand{\ol}{\overline}
\newcommand{\wt}{\widetilde}
\newcommand{\es}{\emptyset}
\newcommand{\wh}{\widehat}
\newcommand{\ph}{\phantom}
\def\ni{\noindent}
\newcommand{\vs}{\vspace{1em}}
\title[Equivariant Littlewood-Richardson Skew Tableaux]{Equivariant Littlewood-Richardson\\ Skew Tableaux}
\author{Victor Kreiman}
\address{Department of Mathematics, University of Georgia\\
Athens, GA 30602 USA}
\email{vkreiman@math.uga.edu}
\date{\today}
\begin{document}
\maketitle

\begin{abstract}
We give a positive equivariant Littlewood-Richardson rule also
discovered independently by Molev. Our proof generalizes a proof
by Stembridge of the ordinary Littlewood-Richardson rule. We
describe a weight-preserving bijection between our indexing
tableaux and the Knutson-Tao puzzles.
\end{abstract}

\setcounter{tocdepth}{1} \tableofcontents

\section{Introduction}\label{s.introduction}

In \cite{Mo-Sa}, Molev and Sagan introduced a rule in terms of
barred tableaux for computing the structure constants $c\lmn$ for
products of two factorial Schur functions. Knutson and Tao
\cite{Kn-Ta} realized that under a suitable specialization these
are the structure constants $C\lmn$ for products of two Schubert
classes in the equivariant cohomology ring of the Grassmannian.
Knutson and Tao \cite{Kn-Ta} also gave a new rule for computing
$C\lmn$, i.e., an equivariant Littlewood-Richardson rule, which is
manifestly positive in the sense of Graham \cite{Gr}. Their rule
was expressed in terms of puzzles, generalizations of
combinatorial objects first introduced by Knutson, Tao, and
Woodward \cite{Kn-Ta-Wo}.

We describe a new nonnegative equivariant Littlewood-Richardson
rule, expressed in terms of skew barred tableaux, which was also
discovered independently by Molev \cite{Mo1}. By nonnegative we
mean that all of the coefficients are either positive or zero;
restricting to the positive coefficients then yields a positive
rule. In our proof, we compute the structure constants $c\lmn$ (as
do both \cite{Mo-Sa} and \cite{Mo1}), and then determine the
structure constants $C\lmn$ by specialization (as does
\cite{Mo1}). Our strategy for deriving the structure constants
$c\lmn$ is to generalize a concise proof by Stembridge
\cite{Stem1} of the ordinary Littlewood-Richardson rule from Schur
functions to factorial Schur functions. This method in fact yields
a more general result, namely, a generalization of Zelevinsky's
extension of the Littlewood-Richardson rule \cite{Ze}.

We illustrate a weight-preserving bijction $\gP$ between the skew
barred tableaux indexing positive coefficients and the Knutson-Tao
puzzles, thus giving a new proof of Knutson and Tao's equivariant
Littlewood-Richardson rule, and also demonstrating that our
positive rule is really the same rule as Knutson and Tao's, just
expressed in terms of different combinatorial indexing sets. We
extend $\gP$ to a bijection from all skew barred tableaux indexing
nonnegative coefficients to the set of trapezoid puzzles, which
are generalizations of puzzles. Our representation of the
bijections generalizes Tao's `proof without words' \cite[Figure
11]{Va}, which gives a bijection between tableaux and puzzles in
the nonequivariant setting.

The results of this paper were presented at the AMS Sectional
Meeting, Santa Barbara, CA, April 2005, and the University of
Georgia Algebra Seminar, August 2006.

\section{Statement of Results}\label{s.results}

Let $\bN$ denote the set of nonnegative integers, and let $n\geq
d$ be fixed positive integers. For $m\in\bN$, define $m':=d+1-m$.
For $\gl=(\gl_1,\ldots,\gl_d)\in\bN^d$, define
$|\gl|=\gl_1+\cdots+\gl_d$. Denote by $\cP_d$ the set of all such
$\gl$ which are \textbf{partitions}, i.e., such that
$\gl_1\geq\cdots\geq \gl_d$, and by $\cP_{d,n}$ the set of all
such partitions for which $\gl_1\leq n-d$. Let
$\gl=(\gl_1,\ldots,\gl_d), \gm=(\gm_1,\ldots,\gm_d),
\gr=(d-1,d-2,\ldots,0)$, and $1=(1,\ldots,1)$ be fixed elements of
$\cP_d$. For any sequence $i=i_1,i_2,\ldots,i_t$,
$i_j\in\{1,\ldots,d\}$, define the \textbf{content of $i$} to be
$\go(i)=(\gx_1,\ldots,\gx_d)\in\bN^d$, where $\gx_k$ is the number
of $k$'s in the sequence.

\subsection{Defining the Structure Constants $c\lmn$ for Products of Factorial
 Schur Functions}
A \textbf{reverse Young diagram} is a right and bottom justified
array of boxes. To $\gm$ we associate the reverse Young diagram
whose bottom row has length $\gm_1$, next to bottom row has length
$\gm_2$, etc. We also denote this reverse Young diagram by $\gm$.
The columns of a reverse Young diagram are numbered from right to
left and the rows from bottom to top.

\begin{figure}[!h]
\begin{center}
\input{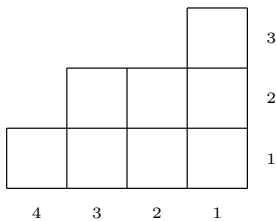}
\end{center}
\caption{\label{f.apic10_trev} The reverse Young diagram
$(4,3,1)$, with rows and columns numbered.}
\end{figure}

A \textbf{reverse tableau of shape $\gm$} is a filling of each box
of $\gm$ with an integer in $\{1,\ldots,d\}$ in such a way that
the entries weakly increase along any row from left to right and
strictly increase along any column from top to bottom. Let
$\cR(\gm)$ denote the set of all reverse tableaux of shape $\gm$.
Let $x_1,\ldots,x_d$ be a finite set of variables and
$(y_i)_{i\in\bN_{>0}}$ an infinite set of variables.  For
$R\in\cR(\gm)$, define
\begin{equation*}
\xy^R=\prod_{\fa\in R}\left(x_{\fa}-y_{a'+c(a)-r(a)}\right),
\end{equation*}
where for entry $a\in R$, $c(a)$ and $r(a)$ are the column and row
numbers of $a$ respectively. The \textbf{factorial Schur function}
is defined to be
\begin{equation*}
s_{\gm}\xy=\sum_{R\in \cR(\gm)}\xy^R.
\end{equation*}
Factorial Schur functions are special cases of Lascoux and
Sch\"utzenberger's double Schubert polynomials \cite{La-Sc1,
La-Sc2}. Various versions of factorial Schur functions and their
properties have been introduced and studied by \cite{Bi-Lo},
\cite{Ch-Lo}, \cite{Go-Gr}, \cite{La}, \cite{Mac1}, \cite{Mac2},
\cite{Mo1}, and \cite{Mo2} (see \cite{Mi}, \cite{Mo1}, and
\cite{Mo-Sa} for more discussion of these polynomials).

We check that our definition of factorial Schur function agrees
with the definition in \cite{Mo-Sa}, which is expressed in terms
of Young tableaux. Replacing each entry $a$ in a reverse tableau
$R$ by $a'$ and rotating the resulting tableau by $180$ degrees,
one obtains a Young tableau $T$. This operation defines a
bijection between reverse tableax of shape $\gm$ and Young
tableaux of shape $\gm$. The polynomials $\xy^T$, as defined in
\cite{Mo-Sa}, and $\xy^R$, as defined above, are related by a
fixed permutation on the indices of the $x_i$'s, namely the
involution $i\mapsto i'$. Thus the equivalence of the two
definitions follows from the fact that factorial Schur functions
are symmetric in the $x_i$'s. (Corollary \ref{c.alternant} also
establishes the equivalence of the two definitions.)

From the definition of $s_\gm\xy$, one sees that
\begin{equation*}
s_\gm\xy=s_\gm(x)+\text{ terms of lower degrees in the
}x_i\text{'s},
\end{equation*}
where $s_\gm(x)$ is the Schur function in $x_1,\ldots,x_d$. Since
the Schur functions form a $\bC$-basis for
$\bC[x_1,\ldots,x_d]^{S_d}$, the factorial Schur functions must
form a $\bC[y_i]$-basis for $\bC[y_i][x_1,\ldots,x_d]^{S_d}$. Thus
\begin{equation}\label{e.fact_lr_defn}
s_\gl\xy s_\gm\xy=\sum c_{\gl,\gm}^{\gn}s_{\gn}\xy,
\end{equation}
for some polynomials $c_{\gl,\gm}^{\gn}\in \bC[y_i]$, where the
summation is over all $\gn\in\cP_d$.

Also from the definition one sees that $s_\gm\xy$ is a homogeneous
polynomial of degree $|\gm|$. Therefore
$|\gl|+|\gm|-|\gn|=\deg(c\lmn)$. If $|\gl|+|\gm|-|\gn|=0$, then
$c\lmn\in\bC$ is the ordinary Littlewood-Richardson coefficient
(see \cite{Fu}, \cite{Li-Ri}, \cite{Sa}).

\subsection{Computing the Structure Constants $c\lmn$}

The \textbf{skew diagram $\gl*\gm$} is obtained by placing the
Young diagram $\gl$ above and to the right of the reverse Young
diagram $\gm$ (see Figure \ref{f.apic11_tyam}). A \textbf{skew
barred tableau $L$ of shape $\gl*\gm$} is a filling of each box of
the subdiagram $\gl$ of $\gl*\gm$ with an element of
$\{1,\ldots,d\}$ and each box of the subdiagram $\gm$ of $\gl*\gm$
with an element of $\{1,\ldots,d\}\cup\{\ol{1},\ldots,\ol{d}\}$,
in such a way that the values of the entries, without regard to
whether or not they are barred, weakly increase along any row from
left to right and strictly increase along any column from top to
bottom. The \textbf{unbarred column word of $L$}, denoted by
$L^u$, is the sequence of unbarred entries of $L$ beginning at the
top of the rightmost column, reading down, then moving to the top
of the next to rightmost column and reading down, etc (the barred
entries are just skipped over in this process).  We say that that
the unbarred column word of $L$ is \textbf{Yamanouchi} if, when
one writes down the word and stops at any point, one will have
written at least as many ones as twos, at least as many twos as
threes, $\ldots$, at least as many $(d-1)$'s as $d$'s. The
\textbf{unbarred content of $L$} is $\go(L^u)$, the content of the
unbarred column word.

\begin{defn} An \textbf{equivariant Littlewood-Richardson skew tableau}
is a skew barred tableau whose unbarred column word is Yamanouchi.
We denote the set of all equivariant Littlewood-Richardson skew
tableaux of shape $\gl*\gm$ and unbarred content $\gn$ by
$\cL\cR\lmn$.
\end{defn}
\ni We remark that this definition forces the $i$-th row of $\gl$
to consist of $\gl_i$ unbarred $i$'s.

\begin{figure}[!h]
\begin{center}
\input{apic11_tyam}
\end{center}
\caption{\label{f.apic11_tyam} An equivariant
Littlewood-Richardson skew tableau of shape $(2,1,1)*(4,3,1)$ and
unbarred content $(3,3,2,1)$. The unbarred column word,
$1,1,2,3,2,4,3,1,2$, is Yamanouchi, as required.}
\end{figure}

For $L$ a skew barred tableau and $a\in L$, denote by $L^u_{<a}$
the portion of the unbarred column word of $L$ which comes before
reaching $a$ when reading entries from $L$. Define
\begin{equation}\label{e.c_L}
c_L=\prod_{\fa\in L\atop \fa\text{
barred}}\left(y_{|\fa|'+\go(L^u_{<\fa})_{|\fa|}}-y_{|\fa|'+c(\fa)-r(\fa)}\right),
\end{equation}
where $r(\fa)$ and $c(\fa)$ are the row and column numbers of
$\fa$ considered as entries of $\gm$ (see Figure
\ref{f.apic10_trev}),  and $|\fa|'=d+1-|\fa|$ (we use the absolute
value symbol, $|a|$, to stress that we are interested in the
integer {\it value} of the barred entry $a$). As usual, the
trivial product is defined to be $1$. The main result of this
paper is the following

\begin{thm}\label{t.lr_coeff}
$\displaystyle c\lmn=\sum_{L\in\cL\cR_{\gl,\gm}^{\gn}} c_L$.
\end{thm}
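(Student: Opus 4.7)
The plan is to follow Stembridge's concise proof \cite{Stem1} of the classical Littlewood--Richardson rule, generalized so as to keep track of the $y$-weights contributed by the $x-y$ factors in the factorial Schur functions. The starting point is to expand $s_\gl\xy\,s_\gm\xy$: the factor $s_\gl\xy$ is expanded via its defining sum over reverse tableaux of shape $\gl$, and the factor $s_\gm\xy$ is expanded over $\cR(\gm)$, but with each factor $x_a - y_{a'+c(\fa)-r(\fa)}$ rewritten by the telescoping identity
\begin{equation*}
x_a - y_{a'+c(\fa)-r(\fa)} \;=\; \bigl(x_a - y_{a'+k}\bigr) \;+\; \bigl(y_{a'+k} - y_{a'+c(\fa)-r(\fa)}\bigr),
\end{equation*}
where $k$ is chosen to encode the unbarred column word read up to $\fa$. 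Summing over all choices of splitting, each box of $\gm$ is naturally marked as either \emph{unbarred} (contributing an $x-y$ factor that merges with the reverse-tableau expansion of $s_\gl\xy$) or \emph{barred} (contributing a pure $y-y$ difference). This rewrites $s_\gl\xy\,s_\gm\xy$ as a sum over \emph{all} skew barred tableaux $L$ of shape $\gl * \gm$, in which each summand factors as $c_L$, as defined in (\ref{e.c_L}), times the monomial contribution of the unbarred portion.

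The second step is a sign-reversing, $c_L$-preserving involution $\iota$, in the style of Stembridge, on the subset of skew barred tableaux whose unbarred column word $L^u$ is not Yamanouchi. The involution locates the first prefix of $L^u$ at which the Yamanouchi condition fails---some $k+1$ without a preceding unmatched $k$---selects a canonical pair of unbarred entries of values $k$ and $k+1$ responsible for the failure, and swaps them locally. The swap is designed so that, viewed through the alternant expansion of $s_{\go(L^u)}\xy$, the involution cancels two terms with opposite signs, and simultaneously preserves each count $\go(L^u_{<\fa})_{|\fa|}$ at every barred $\fa$, so that $c_L = c_{\iota(L)}$. Consequently all non-Yamanouchi terms cancel, and only the elements of $\bigsqcup_\gn \cL\cR\lmn$ survive. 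Identifying the coefficient of $s_\gn\xy$ on both sides of (\ref{e.fact_lr_defn}) then yields the formula $c\lmn = \sum_{L \in \cL\cR\lmn} c_L$.

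The main obstacle is the simultaneous sign and weight control in the involution. In the classical setting the Stembridge swap may move entries freely along rows because there are no $y$-weights to respect, but here each barred entry $\fa$ carries a factor depending on $\go(L^u_{<\fa})_{|\fa|}$, the multiplicity of $|\fa|$ in the unbarred column word read up to $\fa$. The swap must therefore be localized to unbarred entries lying between appropriate barred markers and must permute values $k$ and $k+1$ in a way that does not alter these multiplicities at any barred position. Designing such a swap, and checking that the resulting map is genuinely an involution on non-Yamanouchi skew barred tableaux, is the core combinatorial step; once this is established, the remainder of the argument is a direct matching of coefficients.
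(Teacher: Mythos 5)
Your overall strategy matches the paper's in outline: split each factor of $s_\gm\xy$ by a telescoping identity into an ``unbarred'' $x-y$ part and a ``barred'' $y-y$ part, obtain a signed sum over barred objects, and cancel the non-Yamanouchi terms by a sign-reversing, weight-preserving involution. But the step you defer as ``the core combinatorial step'' is precisely where the difficulty lies, and as described it would fail. You ask for an involution $\iota$ on non-Yamanouchi skew barred tableaux that is sign-reversing and satisfies $c_L=c_{\iota(L)}$ factor by factor, i.e., that preserves both $\go(L^u_{<\fa})_{|\fa|}$ and the positional index $|\fa|'+c(\fa)-r(\fa)$ for every barred $\fa$. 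A Bender--Knuth-type swap necessarily changes values $k\leftrightarrow k+1$ and moves entries between boxes, so for a barred entry both indices of its factor $y_i-y_j$ in (\ref{e.c_L}) generically change; the paper explicitly remarks that no suitable set of involutions on $\cB(\gm)$ could be found. The paper's resolution is to expand each binomial $(y_e-y_f)$ into its two monomials, replacing barred tableaux by \emph{hatted} tableaux as in (\ref{e.lr_coeff_hatted_coeff}), and to build involutions $s_i$ on $\cH(\gm)$ for which the index $e$ is preserved along one correspondence $b_l$ (applied to left-hatted entries) while the index $f$ is preserved along a different correspondence $b_r$ (applied to right-hatted entries); see Lemma \ref{l.invol_properties}(iv),(v). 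The cancellation then happens monomial by monomial, and only after summing over the $2^m$ hatted liftings of a barred tableau does one recover the binomial weight; the involution does not descend to $\cB(\gm)$, so the object you posit does not obviously exist.

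A secondary gap: you expand $s_\gl\xy$ over reverse tableaux of shape $\gl$ and then ``identify the coefficient of $s_\gn\xy$,'' but nothing in your setup explains how the surviving unbarred contributions reassemble into factorial Schur functions, nor where the signs needed by your involution come from. The paper instead multiplies the alternant $a_{\gl+\gr}\xy$ (not $s_\gl\xy$) by $s_\gm\xy$, proves $a_{\gl+\gr}\xy s_\gm\xy=\sum_B c_{\gl*B}\,a_{\gl+\gr+\go(B^u)}\xy$ (Lemma \ref{l.lra}, whose proof itself requires a symmetrization over $S_d$ carried out with the same involutions), kills the non-dominant alternants (Lemma \ref{l.bad_guys_vanish}), and divides by $a_\gr\xy$ via the bialternant formula (Corollary \ref{c.alternant}). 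The determinant supplies the signs, and the recombination into $s_\gn\xy$ is automatic. You would need to supply both of these mechanisms before your final coefficient comparison is meaningful.
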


\begin{ex} Let $L$ be the equivariant Littlewood-Richardson skew tableau of Figure
\ref{f.apic11_tyam}.
Suppose that $d=4$. Consider the entry $\fa=\ol{1}$ in row 2,
column 2 of $\gm$. We have $L^u_{<\fa}=1,1,2,3,2,4$, so
$\go(L^u_{<\fa})=(2,2,1,1)$. Thus
$|\fa|'+\go(L^u_{<\fa})_{|\fa|}=(d+1-(1))+(2,2,1,1)_{1}=4+2=6$.
Also, $|\fa|'+c(\fa)-r(\fa)=(d+1-(1))+2-2=4$. Therefore the
contribution of this entry to $c_L$ is $y_6-y_4$.

Similarly, one computes the contribution of the entry $\ol{2}$ in
row 1, column 3 to be $y_5-y_5$ and the contribution of the entry
$\ol{3}$ in row 2, column 1 to be $y_3-y_1$. Therefore
$c_{L}=(y_5-y_5)(y_6-y_4)(y_3-y_1)$, which
equals $0$. 

\end{ex}

\subsection{Nonnegativity and Positivity}

If $L\in\cL\cR\lmn$, then we write $c_L>0$ if each factor in
(\ref{e.c_L}) is of the form $y_i-y_j$ with $i>j$. We write
$c_{L}\geq 0$ if either $c_{L}>0$ or $c_{L}=0$.
\begin{prop}\label{p.nonnegativity} If $L\in\cL\cR\lmn$, then $c_{L}\geq 0$.
\end{prop}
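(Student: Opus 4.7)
The proposition states that $c_L \geq 0$, which by the definition preceding it means either every factor in (\ref{e.c_L}) is of the form $y_i - y_j$ with $i > j$, or some factor is $y_i - y_i$ (so $c_L = 0$ as a polynomial). Writing each factor as $y_{|\fa|' + \go(L^u_{<\fa})_{|\fa|}} - y_{|\fa|' + c(\fa) - r(\fa)}$ and setting $k := |\fa|$, $r := r(\fa)$, $c := c(\fa)$, the content of the proposition is that either
\[
\go(L^u_{<\fa})_k \geq c - r \qquad (\ast)
\]
holds for every barred $\fa$ in $L$, or else $(\ast)$ is attained with equality by some barred $\fa$. The case $c \leq r$ makes $(\ast)$ automatic, so I focus on barred entries with $c > r$.

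I would first establish a structural lemma about positions carrying value $k$ in $\gm$: if $(r_1, c_1), (r_2, c_2) \in \gm$ both carry value $k$ with $r_1 > r_2$, then $c_1 < c_2$. This follows by combining weak row increase (forcing the entry of row $r_2$ at column $c_1$ to be $\geq k$ when $c_1 \leq c_2$) with strict column increase in column $c_1$ (forcing the entry of row $r_1 > r_2$ there to be strictly less than $k$, contradicting the assumption). In particular, the value-$k$ positions of $\gm$ in row $r$ form a contiguous column-range $[c_\ell, c_h]$ containing $c$, and every $k$ of $\gm$ in rows above $r$ lies strictly to the right of $c_\ell$, hence appears in $L^u_{<\fa}$ whenever it is unbarred.

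Next I would track how the running count $\go(L^u_{<\cdot})_k$ evolves as we sweep through the row-$r$ $k$-block. Between any two consecutive $k$-entries of the block, the intervening positions lie above row $r$ in the intermediate columns and therefore carry values $< k$ by strict column increase, so the count is constant and increments by exactly one at each unbarred $k$ of the block. This yields, for the $i$-th barred entry $\fa$ of the block (indexed from zero), the identity
\[
\go(L^u_{<\fa})_k - (c - r) = E_0 - i, \qquad E_0 := \go(L^u_{<(r, c_\ell)})_k - (c_\ell - r).
\]
If $E_0 \geq 0$, the excesses $E_0, E_0 - 1, E_0 - 2, \ldots$ either exhaust the barred entries of the block while remaining strictly positive (all corresponding factors good) or first hit zero at $i = E_0$ (producing a zero factor that forces $c_L = 0$).

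The final --- and most delicate --- step is the bootstrap $E_0 \geq 0$, i.e.,
\[
\gl_k + \#\{\text{unbarred } k\text{'s in rows} > r \text{ of } \gm\} \;\geq\; c_\ell - r,
\]
where the $k$-th row of $\gl$ contributes $\gl_k$ unbarred $k$'s, all read before any entry of $\gm$. Here the Yamanouchi property of $L^u$ is essentially used, applied to the prefix ending just before $(r, c_\ell)$ and combined with the rigid cascade of values forced into the columns to the right of $c_\ell$ by the strict column condition; an induction downward on $r$ reduces the bound, for the topmost row of $\gm$ carrying a $k$, to the Yamanouchi property of the $\gl$-prefix, which is automatic since row $i$ of $\gl$ is filled by $i$'s. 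The principal obstacle is this bootstrap, since Yamanouchi naturally provides upper bounds on $k$-counts while a lower bound is what we need; extracting it exploits the structural lemma together with the fact that strict column increase in the columns to the right of $c_\ell$ supplies sufficiently many occurrences of $k-1, k-2, \ldots$ that Yamanouchi then propagates into the required occurrences of $k$.
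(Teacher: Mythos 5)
Your reduction and your within-block bookkeeping are sound, and they reproduce in essence the paper's observation (Lemma \ref{l.tech.np}(ii)) that the excess $\gD(\fa)=\go(L^u_{\leq\fa})_{|\fa|}-c(\fa)+r(\fa)$ drops by at most one per box as one moves left along a row, and drops only at barred boxes. But the proof has a genuine gap at exactly the point you flag as most delicate: the bootstrap $E_0\geq 0$ is supported only by a vague sketch, and it is in fact false. Take $d=2$, $\gl=(0,0)$, $\gm=(2,0)$, and let $L$ be the skew barred tableau whose single row of $\gm$ reads $\ol{1},\ol{2}$ from left to right (so $\ol{1}$ occupies column $2$ and $\ol{2}$ occupies column $1$). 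The unbarred column word is empty, hence Yamanouchi, so $L\in\cL\cR_{\gl,\gm}^{\gn}$ with $\gn=(0,0)$. For $k=1$ the block is the single cell $(r,c_\ell)=(1,2)$, and $E_0=0-(2-1)=-1<0$. The Proposition still holds for this $L$, but only because the $\ol{2}$ in column $1$ contributes the factor $y_1-y_1=0$ --- a zero coming from a \emph{different} value-block of the same row. This is the general mechanism: when a barred entry has negative excess, the compensating zero factor need not lie in its own constant-value block, so an argument that treats each block in isolation cannot close, and no unconditional lower bound on $E_0$ is available.

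The paper's proof runs your one-step-decrease argument across the \emph{whole} row rather than block by block: it starts at the rightmost box of the row, where the excess is automatically nonnegative because $c(\fa)=1\leq r(\fa)$, and it invokes the Yamanouchi property precisely at the transitions between boxes of different values (if $m$ sits one box to the left of $\fa$ then $|m|\leq|\fa|$, so $\go(w)_{|m|}\geq\go(w)_{|\fa|}$ for the relevant prefix $w$) to show that the excess decreases by at most one per box and only at barred boxes. A discrete intermediate-value argument then produces a barred box of excess exactly zero whenever the excess ever becomes negative. To repair your proof you would have to replace the false claim $E_0\geq 0$ by the disjunction ``either $E_0\geq 0$ for every block, or some barred entry of $L$ has excess exactly $0$,'' and establishing that disjunction requires the same cross-block comparison along the row.
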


\ni Let $\cL\cR\lmnp$ be the set of $L\in\cL\cR\lmn$ for which
$c_L>0$. By Proposition \ref{p.nonnegativity}, we can restrict the
summation in Theorem \ref{t.lr_coeff} to such $L$:
\begin{cor}\label{c.lr_coeff}
$\displaystyle c\lmn=\sum_{L\in\cL\cR\lmnp} c_L$.
\end{cor}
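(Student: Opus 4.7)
The plan is to obtain the corollary as an immediate consequence of Theorem \ref{t.lr_coeff} and Proposition \ref{p.nonnegativity}, with no additional combinatorial work. Theorem \ref{t.lr_coeff} already provides the identity
$$c\lmn = \sum_{L \in \cL\cR\lmn} c_L,$$
so the only task is to show that the indexing set on the right can be replaced by $\cL\cR\lmnp$ without changing the value of the sum.

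To execute this, I partition $\cL\cR\lmn$ into the subset $\cL\cR\lmnp$, on which $c_L > 0$ by definition, and its complement. By Proposition \ref{p.nonnegativity}, for every $L \in \cL\cR\lmn$ either $c_L > 0$ or $c_L = 0$, so the complement consists precisely of those $L$ for which $c_L = 0$ (as a polynomial in the $y_i$). Splitting the sum from Theorem \ref{t.lr_coeff} along this partition gives
$$c\lmn = \sum_{L \in \cL\cR\lmnp} c_L \; + \sum_{\substack{L \in \cL\cR\lmn \\ L \notin \cL\cR\lmnp}} c_L,$$
and since each summand in the second sum is identically zero, that sum vanishes term by term. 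This leaves exactly the claimed identity.

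There is no real obstacle here, since the substantive content has been isolated into the two results the corollary cites: Theorem \ref{t.lr_coeff} supplies the combinatorial formula, and Proposition \ref{p.nonnegativity} supplies the dichotomy that each summand is either positive in the prescribed sense or identically zero. The corollary is simply the observation that the zero summands may be discarded from a finite sum, sharpening Theorem \ref{t.lr_coeff} into the manifestly positive form of Graham \cite{Gr}.
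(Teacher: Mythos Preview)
Your proposal is correct and matches the paper's own argument exactly: the paper simply remarks that by Proposition~\ref{p.nonnegativity} one can restrict the summation in Theorem~\ref{t.lr_coeff} to those $L$ with $c_L>0$, which is precisely the partition-and-discard reasoning you give.
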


\ni One could, of course, use (\ref{e.c_L}), the definition of
$c_{L}$, to distinguish between $c_{L}>0$ and $c_{L}=0$: $c_{L}>0$
if and only if $\go(L^u_{<\fa})_{|\fa|}>c(\fa)-r(\fa)$ for all
barred $\fa\in L$. The following Proposition gives a number of
other tests for more efficiently making this determination.
\begin{prop}\label{p.positivity_crit} If $L\in\cL\cR\lmn$, then the following are equivalent:

\ni 1. $c_{L}>0$

\ni 2. $\go(L^u_{<\fa})_{|\fa|}> c(\fa)-r(\fa)$ for all barred
$\fa\in L$.

\ni 3. $\go(L^u_{<\fa})_{|\fa|}> c(\fa)-r(\fa)$ for all barred
$\fa\in L$ with $r(\fa)=1$.

\ni 4. $\go(L^u_{<\fa})_{|\fa|}\geq c(\fa)$ for all barred $\fa\in
L$

\ni 5. $\go(L^u_{<\fa})_{|\fa|}\geq c(\fa)$ for all barred $\fa\in
L$ with $r(\fa)=1$.
\end{prop}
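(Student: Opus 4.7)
The plan. The implications among $(1)$--$(5)$ split into trivial cases and one main step. From the definition in (\ref{e.c_L}), the factor $y_i-y_j$ contributed by a barred entry $\fa$ is positive precisely when $i>j$, which unpacks to $\omega(L^u_{<\fa})_{|\fa|}>c(\fa)-r(\fa)$; this gives $(1)\iff(2)$. The equivalence $(3)\iff(5)$ is a direct integer rewriting, since for $r(\fa)=1$ both inequalities become $\omega(L^u_{<\fa})_{|\fa|}\geq c(\fa)$. The restrictions $(2)\Rightarrow(3)$ and $(4)\Rightarrow(5)$ are immediate, and $(4)\Rightarrow(2)$ follows from $c(\fa)-r(\fa)+1\leq c(\fa)$ whenever $r(\fa)\geq 1$. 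The entire claim therefore reduces to proving $(5)\Rightarrow(4)$.

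I would prove $(5)\Rightarrow(4)$ by induction on $r=r(\fa)$ for a barred $\fa$ at $(r,c)$ with value $v$; the base case $r=1$ is (5) itself. For $r\geq 2$, strict column increase forces the entries at rows $1,2,\dots,r-1$ of column $c$ to have values $v_1>v_2>\cdots>v_{r-1}$, all strictly greater than $v$. The key combinatorial observation is that every unbarred entry read strictly between $\fa$ and any position $(r_0,c)$ with $r_0<r$ lies in column $c$ with value in the open interval $(v,v_{r_0})$, so $\omega(L^u_{<(r_0,c)})_v=\omega(L^u_{<\fa})_v$ and $\omega(L^u_{<(r_0,c)})_{v_{r_0}}=\omega(L^u_{<\fa})_{v_{r_0}}$. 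Consequently, if any of $(1,c),(2,c),\dots,(r-1,c)$ is barred, we apply either (5) (when $r_0=1$) or the inductive hypothesis (when $r_0>1$) to obtain $\omega(L^u_{<(r_0,c)})_{v_{r_0}}\geq c$, and the Yamanouchi inequality $\omega_v\geq\omega_{v_{r_0}}$ on the prefix $L^u_{<(r_0,c)}$ (which uses $v<v_{r_0}$) yields $\omega(L^u_{<\fa})_v\geq c$.

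The main obstacle is the remaining subcase, in which every entry of column $c$ at rows $1,\dots,r-1$ is unbarred, so the induction cannot descend within column $c$. The plan here is to invoke (5) at a barred row-1 entry $(1,c^*)$ in a column $c^*>c$ whose value $v^*$ is at least $v$; the weakly-decreasing behavior of row-1 values together with the staircase arrangement of equal-valued entries in $\mu$ makes this the natural candidate. Granting its existence, applying (5) at $(1,c^*)$ gives $\omega(L^u_{<(1,c^*)})_{v^*}\geq c^*$, Yamanouchi then gives $\omega(L^u_{<(1,c^*)})_v\geq c^*$, and a direct count shows that the unbarred $v$'s read between $\fa$ and $(1,c^*)$ number at most $c^*-c$ (at most one per intervening column, with none in column $c$ below $\fa$ or in column $c^*$ above row 1, by column-strictness), whence $\omega(L^u_{<\fa})_v\geq c^*-(c^*-c)=c$. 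The delicate point I expect to be the crux is verifying that the required $(1,c^*)$ must exist whenever (4) is in danger of failing at $\fa$; this relies on the global Yamanouchi constraint on $L^u$ combined with the structural analysis of how values are distributed across the rows and columns of $\mu$.
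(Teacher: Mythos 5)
Your reductions are all correct: $1\iff 2$ is immediate from (\ref{e.c_L}), the implications $4\Rightarrow 2\Rightarrow 3\iff 5$ are trivial, and the proposition does come down to $5\Rightarrow 4$. Your first subcase is also sound: if some entry of column $c$ below $\fa$ is barred, the count of unbarred $|\fa|$'s is unchanged between the two reading positions and the Yamanouchi inequality transfers the bound from $v_{r_0}$ to $v$. The problem is the remaining subcase, which is the actual content of the proposition, and there the witness you are counting on does not exist in general. Take $d=3$, $\gl=(1)$, $\gm=(3,2)$, with $L|_\gm$ having row $2$ equal to $\ol{1},\ol{1}$ (columns $2,1$) and row $1$ equal to $1,2,\ol{2}$ (columns $3,2,1$); the unbarred column word is $1,2,1$, which is Yamanouchi. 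At $\fa=(2,2)$ one has $\go(L^u_{<\fa})_1=1<2=c(\fa)$, so condition $4$ fails; the unique entry below $\fa$ in its column is the unbarred $2$ at $(1,2)$, so your subcase applies; and the only bottom-row box in a column $c^*>2$ is the unbarred $1$ at $(1,3)$. No barred $(1,c^*)$ with $c^*>c$ exists. (Condition $5$ does fail in this tableau, but at $(1,1)$, i.e., strictly to the \emph{right} of column $c$.) Hence the existence of your left-hand witness cannot be deduced from the failure of $4$ at $\fa$ together with the subcase hypotheses; it could only come from the global hypothesis $5$, and extracting it that way would already require the implication you are trying to prove.

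The paper's argument runs in the opposite direction. Setting $\gD(\fa)=\go(L^u_{\leq\fa})_{|\fa|}-c(\fa)+r(\fa)$, the Yamanouchi condition makes $\fa\mapsto\gD(\fa)-r(\fa)$ weakly decreasing down each column, so a failure of condition $4$ at $\fa$ forces $\gD\leq 0$ at the bottom entry of column $c(\fa)$. Since $\gD\geq 0$ at the rightmost box of the bottom row, and since a leftward step along a row decreases $\gD$ by at most one and does so only when the left box is barred, there must be a barred bottom-row entry $\fb$ with $\gD(\fb)=0$ at some column $\leq c(\fa)$, and this $\fb$ violates conditions $3$ and $5$. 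You should replace the search for a witness to the left by this downward-then-rightward propagation of the deficit.
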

\ni
If $L\in\cL\cR\lmn$ satisfies any of these equivalent conditions,
then we say that $L$ is \textbf{positive}. It is obvious that
$4\implies 2\implies 3\iff 5$.  Condition 3 states that it
suffices to check barred entries on the bottom row of $L$ for
positivity. Condition 4 has the following interpretation: for any
barred entry $a\in L$, the corresponding factor $y_i-y_j$ in
$c_{L}$ satisfies $i-j\geq r(a)$ (which of course implies $i-j>0$,
the condition required for positivity).

\begin{ex}\label{e.tab_lr1} Let $d=3$, $\gl=(1,1)$, $\gm=(3,2)$, and $\gn=(3,2,1)$.
We list all $L\in\cL\cR\lmnp$, and for each $L$ we give $c_{L}$:
\begin{center}
\psset{unit=.6cm,linewidth=.02}

$
\begin{array}{c@{\hspace{3.5em}}c@{\hspace{3.5em}}c@{\hspace{3.5em}}c}
\pspicture(1,-1)(4,4)
\psset{linecolor=\equivpiece}
\psframe*(0,0)(1,1)
\psset{linecolor=black}
\psset{linewidth=.02}
\psline(1,2)(3,2) \psline(0,1)(3,1) \psline(0,0)(3,0)
\psline(0,0)(0,1) \psline(1,0)(1,2) \psline(2,0)(2,2)
\psline(3,0)(3,2)
\rput{0}(.5,.5){$\ol{1}$} \rput{0}(1.5,.5){$2$}
\rput{0}(2.5,.5){$3$}
\rput{0}(1.5,1.5){$1$} \rput{0}(2.5,1.5){$1$}

\psline(3,2)(4,2) \psline(3,3)(4,3) \psline(3,4)(4,4)
\psline(3,2)(3,4) \psline(4,2)(4,4)
\rput{0}(3.5,2.5){$2$} \rput{0}(3.5,3.5){$1$}

\rput{0}(1.5,-.5){$c_L=y_6-y_5$}
\endpspicture
&

\pspicture(0,-1)(3,2)
\psset{linecolor=\equivpiece}
\psframe*(1,1)(2,2)
\psset{linecolor=black}
\psline(1,2)(3,2) \psline(0,1)(3,1) \psline(0,0)(3,0)
\psline(0,0)(0,1) \psline(1,0)(1,2) \psline(2,0)(2,2)
\psline(3,0)(3,2)
\rput{0}(.5,.5){$1$} \rput{0}(1.5,.5){$2$}
\rput{0}(2.5,.5){$3$}
\rput{0}(1.5,1.5){$\ol{1}$} \rput{0}(2.5,1.5){$1$}

\psline(3,2)(4,2) \psline(3,3)(4,3) \psline(3,4)(4,4)
\psline(3,2)(3,4) \psline(4,2)(4,4)
\rput{0}(3.5,2.5){$2$} \rput{0}(3.5,3.5){$1$}

\rput{0}(1.5,-.5){$c_L=y_5-y_3$}
\endpspicture
&

\pspicture(0,-1)(3,2)
\psset{linecolor=\equivpiece}
\psframe*(2,1)(3,2)
\psset{linecolor=black}
\psset{linewidth=.02}
\psline(1,2)(3,2) \psline(0,1)(3,1) \psline(0,0)(3,0)
\psline(0,0)(0,1) \psline(1,0)(1,2) \psline(2,0)(2,2)
\psline(3,0)(3,2)
\rput{0}(.5,.5){$1$} \rput{0}(1.5,.5){$2$}
\rput{0}(2.5,.5){$3$}
\rput{0}(1.5,1.5){$1$} \rput{0}(2.5,1.5){$\ol{1}$}

\psline(3,2)(4,2) \psline(3,3)(4,3) \psline(3,4)(4,4)
\psline(3,2)(3,4) \psline(4,2)(4,4)
\rput{0}(3.5,2.5){$2$} \rput{0}(3.5,3.5){$1$}

\rput{0}(1.5,-.5){$c_L=y_4-y_2$}
\endpspicture
&

\pspicture(0,-1)(3,2)
\psset{linecolor=\equivpiece}
\psframe*(2,1)(3,2)
\psset{linecolor=black}
\psset{linewidth=.02}
\psline(1,2)(3,2) \psline(0,1)(3,1) \psline(0,0)(3,0)
\psline(0,0)(0,1) \psline(1,0)(1,2) \psline(2,0)(2,2)
\psline(3,0)(3,2)
\rput{0}(.5,.5){$1$} \rput{0}(1.5,.5){$2$}
\rput{0}(2.5,.5){$3$}
\rput{0}(1.5,1.5){$1$} \rput{0}(2.5,1.5){$\ol{2}$}

\psline(3,2)(4,2) \psline(3,3)(4,3) \psline(3,4)(4,4)
\psline(3,2)(3,4) \psline(4,2)(4,4)
\rput{0}(3.5,2.5){$2$} \rput{0}(3.5,3.5){$1$}

\rput{0}(1.5,-.5){$c_L=y_3-y_1$}
\endpspicture
\end{array}
$
\end{center}

Note that if $L$ has an unbarred $2$ in the upper right box of
$\gm$, then the unbarred column word of $L$ is not Yamanouchi, and
if $L$ has two unbarred $1$'s on the top row of $\gm$ and is not
the leftmost diagram, then $c_{L}=0$; thus we do not include such
$L$ among $\cL\cR\lmnp$. By Corollary \ref{c.lr_coeff},
$c_{\gl,\gm}^{\gn}=(y_6-y_5)+(y_5-y_3)+(y_4-y_2)+(y_3-y_1)$$=(y_6-y_1)+(y_4-y_2)$.

We list all $L\in\cL\cR_{\gm,\gl}^{\gn\, +}$, and for each $L$ we
give $c_{L}$:

\begin{center}
\psset{unit=.6cm,linewidth=.02}

$
\begin{array}{c@{\hspace{6em}}c}
\pspicture(0,-1)(4,4)
\psset{linecolor=\equivpiece}
\psframe*(0,1)(1,2)
\psset{linecolor=black}
\psset{linewidth=.02}
\psline(0,2)(1,2) \psline(0,1)(1,1) \psline(0,0)(1,0)
\psline(0,0)(0,2) \psline(1,0)(1,2)
\rput{0}(.5,.5){$3$}
\rput{0}(.5,1.5){$\ol{1}$}

\psline(1,2)(3,2) \psline(1,3)(4,3) \psline(1,4)(4,4)
\psline(1,2)(1,4) \psline(2,2)(2,4)
\psline(3,2)(3,4) \psline(4,3)(4,4)
\rput{0}(1.5,2.5){$2$} \rput{0}(2.5,2.5){$2$}
\rput{0}(1.5,3.5){$1$} \rput{0}(2.5,3.5){$1$}
\rput{0}(3.5,3.5){$1$}

\rput{0}(.5,-.5){$c_L=y_6-y_2$}
\endpspicture
&

\pspicture(0,-1)(1,2)
\psset{linecolor=\equivpiece}
\psframe*(0,1)(1,2)
\psset{linecolor=black}
\psset{linewidth=.02}
\psline(0,2)(1,2) \psline(0,1)(1,1) \psline(0,0)(1,0)
\psline(0,0)(0,2) \psline(1,0)(1,2)
\rput{0}(.5,.5){$3$}
\rput{0}(.5,1.5){$\ol{2}$}

\psline(1,2)(3,2) \psline(1,3)(4,3) \psline(1,4)(4,4)
\psline(1,2)(1,4) \psline(2,2)(2,4)
\psline(3,2)(3,4) \psline(4,3)(4,4)
\rput{0}(1.5,2.5){$2$} \rput{0}(2.5,2.5){$2$}
\rput{0}(1.5,3.5){$1$} \rput{0}(2.5,3.5){$1$}
\rput{0}(3.5,3.5){$1$}

\rput{0}(.5,-.5){$c_L=y_4-y_1$}
\endpspicture
\end{array}
$
\end{center}

By Corollary \ref{c.lr_coeff},
$c_{\gm,\gl}^{\gn}=(y_6-y_2)+(y_4-y_1)$. We see that
$c_{\gm,\gl}^{\gn}=c\lmn$. This is a general fact ensured by
(\ref{e.fact_lr_defn}); however, it is not apparent from the
statement of Corollary \ref{c.lr_coeff}.
\end{ex}

\begin{ex}
For cases where $\gm=\gn$, a formula for $c\lmn$ which produces a
different positive expression than Corollary \ref{c.lr_coeff}
appears in \cite{Bi}, \cite{Ik-Na}, and \cite{Kr}. For example,
using this formula, for $d=3$, $\gl=(2,1)$, and $\gm=\gn=(3,3,1)$,
one computes:
\begin{align*}
c\lmn=&(y_6-y_1)(y_6-y_3)(y_5-y_1)+(y_6-y_1)(y_5-y_4)(y_5-y_1).
\intertext{Using Corollary \ref{c.lr_coeff}:}
c\lmn=&(y_5-y_3)(y_5-y_1)(y_3-y_1)+(y_6-y_4)(y_5-y_1)(y_3-y_1)\\
&+(y_6-y_4)(y_6-y_3)(y_3-y_1)+(y_5-y_3)(y_4-y_3)(y_5-y_1)\\
&+(y_6-y_4)(y_4-y_3)(y_5-y_1)+(y_6-y_4)(y_6-y_3)(y_4-y_3)\\
&+(y_6-y_4)(y_5-y_4)(y_5-y_1)+(y_6-y_4)(y_5-y_4)(y_6-y_3)\\[1em]
c_{\gm,\gl}^{\gn}=&(y_6-y_4)(y_6-y_2)(y_5-y_2)+(y_5-y_3)(y_6-y_2)(y_5-y_2)\\
&+(y_6-y_4)(y_6-y_2)(y_2-y_1)+(y_5-y_3)(y_6-y_2)(y_2-y_1)\\
&+(y_6-y_4)(y_5-y_1)(y_2-y_1)+(y_5-y_3)(y_5-y_1)(y_2-y_1).
\end{align*}
\ni These three polynomials are, of course, equal.
\end{ex}


For $L\in\cL\cR\lmn$, $|\gl|+|\gm|-|\gn|=\#(\text{entries of
}L)-\#(\text{unbarred entries of }L)=\#(\text{barred entries of
}L)=\deg(c\lmn)$. In particular, if $|\gl|+|\gm|-|\gn|=0$, then
$L$ has no barred entries. When $|\gl|+|\gm|-|\gn|=0$, Theorem
\ref{t.lr_coeff} is the ordinary Littlewood-Richardson rule (see
\cite{Fu}, \cite{Li-Ri}, \cite{Sa}).

\subsection{Defining the Structure Constants $C\lmn$ for products of
two Schubert Classes in $H_T^*(Gr_{d,n})$}

The \textbf{Grassmannian} $Gr_{d,n}$ is the set of $d$-dimensional
complex subspaces of $\bC^n$.  Let $\{e_1,\ldots,e_n\}$ be the
standard basis for $\bC^n$. Consider the opposite standard flag,
whose $i$-th space is $\text{Span}(e_n,\ldots,e_{n-i+1})$.  For
$\gl\in\cP_{d,n}$, the (opposite) Schubert variety $X_\gl$ of
$Gr_{d,n}$ is defined by incident relations:
$$X_{\gl}=\{V\in Gr_{d,n}\mid \dim(V\cap F_i)\geq\dim(\bC^\gl\cap
F_i)\},\ i=1,\ldots,n,$$ where
$C^{\gl}=\text{Span}(e_{\gl_d+d},\ldots,e_{\gl_1+1})$. The
Schubert variety $X_\gl$ is invariant under the action of the
group $T=(\bC^*)^n$ on $Gr_{d,n}$. Thus it determines a class
$S_\gl$ in the equivariant cohomology ring $H_T^*(Gr_{d,n})$.

Let $V=Gr_{d,n}\times \bC^n$ be the trivial vector bundle on
$Gr_{d,n}$, with diagonal $T$-action, where $T$ acts naturally on
$Gr_{d,n}$ and on $\bC^n$ (thus $V$ is not equivariantly trivial).
Let $S=\{(w,v)\in V\mid v\in w\}$ be the tautological vector
bundle on $Gr_{d,n}$. Then $S$ is a $T$-invariant sub-bundle of
$V$. Let $Y_1,\ldots,Y_n$ be the equivariant Chern roots of $V^*$
and $X_1,\ldots,X_d$ the equivariant Chern roots of $S$. Then
$H_T^*(Gr_{d,n})$ is a free $\bC[Y_1,\ldots,Y_n]$-module, with the
Schubert classes forming a $\bC[Y_1,\ldots,Y_n]$-basis. Thus for
$\gl,\gm\in\cP_{d,n}$,
$$S_\gl S_\gm=\sum C_{\gl,\gm}^\gn S_\gn,$$ for some $C_{\gl,\gm}^{\gn}\in
\bC[Y_1,\ldots,Y_n]$, where the summation is over all
$\gn\in\cP_{d,n}$. We have (see \cite{Fu1}, \cite{Kn-Ta},
\cite{Mi})

\begin{prop}
For $\gl\in\cP_{d}$,
$S_\gl=s_\gl(X_1,\ldots,X_d,-Y_n,\ldots,-Y_1,0,0,\ldots)$.
\end{prop}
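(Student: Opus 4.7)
The natural strategy is $T$-equivariant localization at the fixed points of $Gr_{d,n}$. These fixed points $p_\gm$ are indexed by $\gm\in\cP_{d,n}$, with $p_\gm$ the coordinate subspace spanned by $\{e_{\gm_j+d+1-j}\mid 1\leq j\leq d\}$. Since $H_T^*(Gr_{d,n})$ is a free $\bC[Y_1,\ldots,Y_n]$-module and the fixed-point locus is finite, the restriction map
\[H_T^*(Gr_{d,n})\longrightarrow\bigoplus_{\gm\in\cP_{d,n}}H_T^*(p_\gm)\cong\bigoplus_\gm\bC[Y_1,\ldots,Y_n]\]
is injective, so it suffices to compare the two sides after restriction to each $p_\gm$.

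First I would compute the $T$-character of $S|_{p_\gm}$. Since $S|_{p_\gm}$ is spanned by the $e_k$'s for $k\in\{\gm_j+d+1-j\}_j$ and the $Y_i$'s are the equivariant Chern roots of $V^*$, the Chern roots $X_1,\ldots,X_d$ of $S$ restrict at $p_\gm$ to the multiset $\{-Y_{\gm_j+d+1-j}\}_j$. Substituting these in, the right-hand side of the proposition restricts to $s_\gl(\tau_\gm\,|\,y)$, where $y$ is the sequence $y_i=-Y_{n+1-i}$ for $i\leq n$ and $y_i=0$ for $i>n$, and $\tau_\gm$ is the corresponding ``partition point'' built from $\gm$ and $y$.

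The key classical input is the vanishing/interpolation characterization of factorial Schur polynomials (Okounkov, Molev, Mihalcea): $s_\gl(\tau_\gm\,|\,y)$ vanishes unless $\gl\subseteq\gm$, and when $\gl\subseteq\gm$ equals a specific product of differences $y_i-y_j$ indexed by the boxes of $\gl$. Under the substitution $y_i=-Y_{n+1-i}$, this product matches, sign for sign, the Knutson--Tao (Graham-positive) formula for $S_\gl|_{p_\gm}$ expressed as a product of $T$-weights of normal directions to $X_\gl$ at $p_\gm$. Agreement of both sides at every $p_\gm$, combined with the injectivity of localization, then yields the identity.

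The main obstacle is bookkeeping: reconciling the paper's reverse Young diagram conventions (the involution $m\mapsto m'=d+1-m$, the shift $y_{a'+c(a)-r(a)}$ in the definition of $s_\gm\xy$, the opposite-flag choice of $X_\gl$, and the reversed listing $-Y_n,\ldots,-Y_1$) with the standard localization formulas. I would pin these signs and indices down by first verifying the identity explicitly at the single fixed point $p_\gl$, where both sides reduce to an explicit product of $T$-weights, and then extending to all $p_\gm$ using the uniformity of the vanishing characterization. Once the dictionary is established the result is classical, and indeed the authors simply cite \cite{Fu1}, \cite{Kn-Ta}, and \cite{Mi}.
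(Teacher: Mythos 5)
The paper offers no proof of this Proposition at all --- it simply cites \cite{Fu1}, \cite{Kn-Ta}, and \cite{Mi} --- so your localization plan is being compared against the standard argument in those references rather than against anything in the text. Your overall strategy (restrict to the $T$-fixed points $p_\gm$, use injectivity of the localization map for the equivariantly formal space $Gr_{d,n}$, and compare the two sides fixed point by fixed point) is indeed the right one and is essentially what the cited sources do.

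There is, however, a genuine gap in the middle step. You assert that for $\gl\subseteq\gm$ the value $s_\gl(\tau_\gm\,|\,y)$ ``equals a specific product of differences $y_i-y_j$ indexed by the boxes of $\gl$,'' and that this matches a product of normal $T$-weights giving $S_\gl|_{p_\gm}$. Both halves of this claim are false for $\gm\supsetneq\gl$: the restriction $S_\gl|_{p_\gm}$ is a product of normal weights only at $\gm=\gl$ (where $p_\gl$ is a smooth point of $X_\gl$); at other fixed points it is in general a \emph{sum} of products (Billey's formula, or the excited-Young-diagram formula of \cite{Ik-Na}), and likewise $s_\gl(\tau_\gm\,|\,y)$ is a single product only on the diagonal $\gm=\gl$. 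So ``agreement of both sides at every $p_\gm$'' is not actually established by what you wrote. The standard repair is to invoke the uniqueness/characterization lemma (Knutson--Tao, or Okounkov's characterization of factorial Schur polynomials): a family of restrictions that vanishes at all $p_\gm$ with $\gm\not\supseteq\gl$, has the prescribed product value at $p_\gl$, and has degree at most $|\gl|$ in the $Y_i$'s is unique, hence must equal $(S_\gl|_{p_\gm})_\gm$. This only requires the explicit product computation at the single point $p_\gl$, which is exactly the computation you propose to do first; your closing remark about ``uniformity of the vanishing characterization'' gestures at this, but the argument needs to be run through the uniqueness lemma rather than through a pointwise match of (nonexistent) product formulas.
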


\ni Thus by specializing (\ref{e.fact_lr_defn}), we can determine
the structure constants $C\lmn$.

\begin{cor}\label{c.c_C} For $\gl,\gm,\gn\in \cP_{d,n}$,
$C_{\gl,\gm}^{\gn}=c_{\gl,\gm}^{\gn}(-Y_n,\ldots,-Y_1,0,0,\ldots)$.
\end{cor}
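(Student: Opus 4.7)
The plan is to obtain the corollary by directly specializing the factorial Schur function identity (\ref{e.fact_lr_defn}) to the equivariant setting. Into both sides of
$$s_\gl\xy\,s_\gm\xy=\sum_{\gn\in\cP_d}c\lmn\,s_\gn\xy$$
I substitute $x_i\mapsto X_i$ for $i=1,\ldots,d$ and $(y_1,y_2,\ldots)\mapsto(-Y_n,-Y_{n-1},\ldots,-Y_1,0,0,\ldots)$. By the proposition immediately preceding this corollary, the left-hand side becomes $S_\gl S_\gm$, and for each $\gn\in\cP_{d,n}$ the summand $s_\gn\xy$ on the right becomes $S_\gn$. The remaining terms, indexed by $\gn\in\cP_d\setminus\cP_{d,n}$ (i.e., $\gn_1>n-d$), must specialize to zero; this is the standard vanishing of factorial Schur functions outside the $d\times(n-d)$ rectangle, which is compatible with (and essentially the content of) the preceding proposition under the convention $S_\gn=0$ for $\gn\not\in\cP_{d,n}$.

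Granted this vanishing, the specialized identity reads
$$S_\gl S_\gm=\sum_{\gn\in\cP_{d,n}}c\lmn(-Y_n,\ldots,-Y_1,0,0,\ldots)\,S_\gn.$$
Comparing with the defining expansion $S_\gl S_\gm=\sum_{\gn\in\cP_{d,n}}C\lmn\,S_\gn$ and invoking the fact that $\{S_\gn:\gn\in\cP_{d,n}\}$ forms a $\bC[Y_1,\ldots,Y_n]$-basis of $H_T^*(Gr_{d,n})$, uniqueness of coefficients yields $C\lmn=c\lmn(-Y_n,\ldots,-Y_1,0,0,\ldots)$. The only nonroutine point in this derivation is the vanishing of $s_\gn\xy$ under the specialization for $\gn\not\in\cP_{d,n}$; once that is in hand, the corollary is immediate from the preceding proposition and uniqueness of the Schubert-basis expansion.
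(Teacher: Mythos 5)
Your proof is correct and takes essentially the same route as the paper, which likewise obtains the corollary by specializing (\ref{e.fact_lr_defn}) through the preceding proposition and matching coefficients in the Schubert basis. The one point you flag as nonroutine --- the vanishing of $s_\gn\xy$ under the specialization for $\gn\in\cP_d\setminus\cP_{d,n}$ --- is already covered by the proposition as stated, since it is asserted for all $\gl\in\cP_d$ with the convention $S_\gl=0$ outside $\cP_{d,n}$.
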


\subsection{Computing the Structure Constants $C\lmn$}
Let $\gl,\gm,\gn\in\cP_{d,n}$.  By Corollary \ref{c.c_C}, the
structure constant $C\lmn$ can be computed using the formula for
$c\lmn$. Let $L\in\cL\cR\lmn$. Define
\begin{equation}\label{e.C_L}
\begin{split}
C_{L}&=c_{L}(-Y_n,\ldots,-Y_1)\\
&=\prod_{\fa\in L\atop \fa\text{
barred}}\left(Y_{(n-d)+|\fa|-(c(\fa)-r(\fa))}-Y_{(n-d)+|\fa|-\go(L^u_{<\fa})_{|\fa|}}\right).
\end{split}
\end{equation}

We write $C_{L}>0$ if each factor in (\ref{e.C_L}) is of the form
$Y_i-Y_j$ with $i>j$, and we write $c_{L}\geq 0$ if either
$C_{L}>0$ or $C_{L}=0$. By (\ref{e.C_L}), $c_{L}\geq 0\iff
C_{L}\geq 0$, and $c_{L}=0\iff C_{L}=0$. Thus Propositions
\ref{p.nonnegativity} and \ref{p.positivity_crit} imply
\begin{cor}\label{c.nonnegativity}
$C_{L}\geq 0$, and $C_{L}>0\iff$ $L$ satisfies any of the
equivalent conditions of Proposition \ref{p.positivity_crit}.
\end{cor}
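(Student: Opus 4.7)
The plan is to read the corollary directly off of the explicit product formula (\ref{e.C_L}), with Propositions \ref{p.nonnegativity} and \ref{p.positivity_crit} used as black boxes on the $c_L$ side. For each barred entry $\fa\in L$ I will compare the factor it contributes to $c_L$, namely $y_{|\fa|'+\go(L^u_{<\fa})_{|\fa|}}-y_{|\fa|'+c(\fa)-r(\fa)}$, with the factor it contributes to $C_L$, namely $Y_{(n-d)+|\fa|-(c(\fa)-r(\fa))}-Y_{(n-d)+|\fa|-\go(L^u_{<\fa})_{|\fa|}}$. Using $|\fa|'=d+1-|\fa|$, the specialization $y_k\mapsto -Y_{n-k+1}$ carries the first factor to the second, and sends distinct subscripts in $\{1,\ldots,n\}$ to distinct $Y$-variables.

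I would then carry out two parallel checks. First, the factor of $C_L$ vanishes iff its two subscripts coincide, which occurs iff $\go(L^u_{<\fa})_{|\fa|}=c(\fa)-r(\fa)$, and this is precisely when the corresponding factor of $c_L$ vanishes; multiplying over all barred $\fa$ and using that $\bC[y_i]$ and $\bC[Y_j]$ are integral domains gives $c_L=0\iff C_L=0$. Second, writing the $C_L$-factor as $Y_i-Y_j$, one has $i>j$ iff $-(c(\fa)-r(\fa))>-\go(L^u_{<\fa})_{|\fa|}$, iff $\go(L^u_{<\fa})_{|\fa|}>c(\fa)-r(\fa)$, which is condition $2$ of Proposition \ref{p.positivity_crit} applied at $\fa$. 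Taking the product over all barred $\fa$ then shows that $C_L>0$ iff every such factor is strictly positive, iff $L$ satisfies the equivalent conditions of Proposition \ref{p.positivity_crit}.

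Combining these two equivalences with Proposition \ref{p.nonnegativity}, which asserts that $c_L$ is either strictly positive or zero, transfers the same dichotomy to $C_L$, yielding both $C_L\geq 0$ and the claimed characterization of strict positivity. No serious obstacle is expected: the argument is essentially a factor-by-factor index comparison inside the product (\ref{e.C_L}). The only mild care needed is to note that $\gl,\gm,\gn\in\cP_{d,n}$ together with $|\fa|\in\{1,\ldots,d\}$ and the column-strict growth of $L$ keep the relevant subscripts in the range where the specialization $y_k\mapsto -Y_{n-k+1}$ is injective, so that the $Y_i-Y_j$ in each factor are genuine differences of distinct variables exactly when their $c_L$-counterparts are.
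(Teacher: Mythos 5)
Your argument is correct and is essentially the paper's own: the paper likewise observes that each factor of $C_L$ in (\ref{e.C_L}) is the image of the corresponding factor of $c_L$ under the specialization, so that $c_L=0\iff C_L=0$ and $c_L>0\iff C_L>0$ factor by factor, and then invokes Propositions \ref{p.nonnegativity} and \ref{p.positivity_crit}. Your extra check that the relevant subscripts stay in $\{1,\ldots,n\}$ (using $\gl,\gm,\gn\in\cP_{d,n}$) is a worthwhile detail the paper leaves implicit.
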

\ni By Theorem \ref{t.lr_coeff}, Corollary \ref{c.c_C}, and
Corollary \ref{c.nonnegativity}, we have
\begin{cor}\label{c.equiv_lr_coeff} $\displaystyle
C\lmn=\sum_{L\in\cL\cR_{\gl,\gm}^{\gn}}
C_{L}=\sum_{L\in\cL\cR\lmnp} C_{L}$.
\end{cor}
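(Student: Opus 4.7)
The plan is to obtain this corollary by direct specialization, since all the pieces have already been assembled in the preceding statements. First I would invoke Corollary \ref{c.c_C}, which tells us that
\begin{equation*}
C\lmn = c\lmn(-Y_n,\ldots,-Y_1,0,0,\ldots).
\end{equation*}
Then I would apply Theorem \ref{t.lr_coeff} to expand the right-hand side as $\sum_{L\in\cL\cR\lmn} c_L$, and note that specialization commutes with the finite sum, so
\begin{equation*}
C\lmn = \sum_{L\in\cL\cR\lmn} c_L(-Y_n,\ldots,-Y_1,0,0,\ldots).
\end{equation*}

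Next I would observe that each summand on the right is exactly $C_L$ as defined in (\ref{e.C_L}); this is just the definition, so the first equality in the corollary is immediate. The only mild point worth spelling out is that the substitution $y_i \mapsto -Y_{n+1-i}$ converts a factor $y_i - y_j$ appearing in $c_L$ into $Y_{n+1-j} - Y_{n+1-i}$, which matches the formula in (\ref{e.C_L}) after the index shift built into the definition.

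Finally, to obtain the second equality I would use Corollary \ref{c.nonnegativity}: each $C_L$ is either strictly positive or zero, with $C_L = 0$ precisely when $c_L = 0$, i.e., precisely when $L \notin \cL\cR\lmnp$. Discarding the zero terms leaves
\begin{equation*}
C\lmn = \sum_{L\in\cL\cR\lmnp} C_L.
\end{equation*}

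There is really no obstacle to this proof beyond bookkeeping: since Theorem \ref{t.lr_coeff} provides the combinatorial identity for $c\lmn$, Corollary \ref{c.c_C} provides the specialization identity, and Corollary \ref{c.nonnegativity} identifies the vanishing terms, the corollary is simply their concatenation. The only thing one must verify carefully is the index arithmetic in the substitution $y_i \mapsto -Y_{n+1-i}$, i.e., that $(d+1-|\fa|)+k$ in the $y$-indexing becomes $(n-d)+|\fa|-k$ in the $Y$-indexing, which is routine.
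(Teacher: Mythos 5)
Your proposal is correct and follows exactly the route the paper takes: the corollary is stated there as an immediate consequence of Theorem \ref{t.lr_coeff}, Corollary \ref{c.c_C}, and Corollary \ref{c.nonnegativity}, with the first equality being the definition \eqref{e.C_L} of $C_L$ and the second coming from discarding the vanishing terms. Your check of the index arithmetic under $y_i\mapsto -Y_{n+1-i}$ is also accurate.
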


\begin{ex} We continue Example \ref{e.tab_lr1}.
For $n\geq 6$, $\gl,\gm\in\cP_{d,n}$. Thus for $\gn\in\cP_{d,n}$,
$C\lmn=(Y_{n+1-2}-Y_{n+1-6})+(Y_{n+1-1}-Y_{n+1-4})=
(Y_{n-1}-Y_{n-5})+(Y_{n}-Y_{n-3})$.
\end{ex}

\subsection{Equivalence of Molev's Results}
Our equivariant Littlewood-Richardson skew tableaux are
essentially the same as the Molev's indexing tableaux \cite{Mo1}.
To determine the tableau in \cite{Mo1} which corresponds to our
$L\in\cL\cR\lmn$, replace all barred entries of $L|_{\gm}$ by
unbarred entries and visa-versa, and then rotate the resulting
object by 180 degrees. If one makes this modification, then
Corollary \ref{c.lr_coeff} is equivalent to \cite[Theorem
2.1]{Mo1} after accounting for the relationship between double
Schur polynomials and factorial Schur functions (see
\cite[(1.9)]{Mo1}), and Corollary \ref{c.equiv_lr_coeff} is
identical to \cite[Corollary 3.1]{Mo1}.

In our notation, Molev's positivity criterion states that for
$L\in\cL\cR\lmn$, $c_{L}>0$ if and only if
\begin{equation}\label{e.molev_positivity}
\go(L^u)'_{c(\fa)}\geq |\fa| \text{ for all }\fa\in L\text{ with
}r(\fa)=1,
\end{equation}
where $\go(L^u)'$ is the {\it conjugate partition} to $\go(L^u)$
(in this case Molev calls $L|_\gm$ {\it $\gn$-bounded}). One can
re-express (\ref{e.molev_positivity}) as follows:
\begin{equation*}
\go(L^u)_{|\fa|}\geq c(\fa) \text{ for all }\fa\in L\text{ with
}r(\fa)=1.
\end{equation*}
It is not difficult to see that this condition is equivalent to
Proposition \ref{p.positivity_crit}.5.

Related and more general results have been achieved in several
directions. Robinson \cite{Ro} has given a Pieri rule in the
equivariant cohomology of the flag variety. McNamara \cite{Mc}
introduced factorial Grothendieck polynomials, generalizations of
factorial Schur functions, and has given a rule for computing the
structure constants for various of their products.

This paper is organized as follows. In Section \ref{s.tableaux},
we introduce various types of tableaux which will appear
throughout the paper. In Section \ref{s.proof_nonnegativity}, we
prove Propositions \ref{p.nonnegativity} and
\ref{p.positivity_crit}, the nonnegativity property and positivity
criteria of $c_L$. In Section \ref{s.proof_stembridge}, we outline
the main steps in our proof of Theorem \ref{t.lr_coeff}, whose two
difficult technical lemmas are proved in Sections
\ref{s.proof_lemma1} and \ref{s.proof_lemma2}. In Section
\ref{s.hatted_tableaux}, we define a set of involutions required
for the proofs of these two lemmas. In Section
\ref{s.bij_knutson-tao}, we describe a bijection between positive
equivariant Littlewood-Richardson skew tableaux and Knutson-Tao
puzzles.

\section{Several Types of Tableaux}\label{s.tableaux}

In this section we collect the definitions of the several types of
tableaux which we will encounter in the remainder of the paper:
reverse barred tableaux, reverse barred subtableaux, and reverse
hatted tableaux.  The latter two are refinements of the first.

A \textbf{reverse barred tableau of shape $\gm$} is a skew barred
tableau of shape $\es*\gm$; alternatively, it can be defined as a
reverse Young diagram of shape $\gm$, each of whose boxes is
filled with either an integer $k$ or a barred integer $\ol{k}$,
$k\in\{1,\ldots,d\}$, in such a way that the values of the
entries, without regard to whether or not they are barred, weakly
increase along any row from left to right and strictly increase
along any column from top to bottom. We denote the set of all
reverse barred tableaux of shape $\gm$ by $\cB(\gm)$. If
$B\in\cB(\gm)$, then define $\gl*B$ to be the skew barred tableau
obtained by placing the Young tableau whose $i$-th row consists of
$\gl_i$ $i$'s above and to the right of $B$. Then $B\mapsto \gl*B$
defines a bijection from $\{B\in\cB(\gm)\mid (\gl*B)^u$ is
Yamanouchi$\}$ to the equivariant Littlewood-Richardson skew
tableaux of shape $\gl*\gm$, whose inverse map is $L\mapsto
L|_\gm$. Any $a\in B$ also corresponds to an entry $a\in\gl*B$.
Define $B^u$ and $B^u_{<a}$ to be $(\es*B)^u$ and $(\es*B)^u_{<a}$
respectively.

A \textbf{reverse barred subtableaux of shape $\gm$} is a reverse
Young diagram $\gm$ each of whose boxes contains either an integer
$k$, a barred integer $\ol{k}$, or is empty, where
$k\in\{1,\ldots,d\}$. A \textbf{reverse subtableau of shape $\gm$}
is a reverse barred tableau of shape $\gm$ which has no barred
entries. We do not define any notion of row semistrictness or
column strictness for such objects, as no such conditions will be
required for our purposes. Denote the set of all reverse
subtableaux and reverse barred subtableaux of shape $\gm$ by
$\cR_{sub}(\gm)$ and $\cB_{sub}(\gm)$ respectively. We have the
following containments:
\begin{equation*}
\begin{array}{ccc}
\cR_{sub}(\gm)&\subset&\cB_{sub}(\gm)\\[.5em]
\cup&&\cup\\[.5em]
\cR(\gm)&\subset&\cB(\gm)
\end{array}
\end{equation*}
For $B\in\cB_{sub}(\gm)$ and $a\in B$, define $B^u$ and $B^u_{<a}$
just as for elements of $\cB(\gm)$, assuming that when reading the
unbarred column word of $B$, both barred entries and empty boxes
are skipped over. If $B\in \cB_{sub}(\gm)$, then define $\wt{B}\in
\cR_{sub}(\gm)$ to be the reverse subtableau obtained by removing
all bars from entries of $B$, i.e., replacing each barred entry of
$B$ by an unbarred entry of the same value.

A \textbf{reverse hatted tableau of shape $\gm$} is a reverse
Young diagram $\gm$ each of whose boxes is filled with either a(n)
(un-hatted) integer $k$, a {\it left} hatted integer $\check{k}$,
or a {\it right} hatted integer $\hat{k}$, $k\in\{1,\ldots,d\}$,
such that the values of the entries, without regard to whether or
not they are hatted, weakly increase along any row from left to
right and strictly increase along any column from top to bottom.
Denote the set of all reverse hatted tableaux of shape $\gm$ by
$\cH(\gm)$.  If $H$ is a reverse hatted tableau, then define
$\ol{H}$ to be the reverse barred tableau produced by replacing
all hats (right and left) by bars. Hence for a reverse barred
tableau $B$ with $m$ barred entries, there are $2^m$ reverse
hatted tableaux $H$ such that $\ol{H}=B$ (since each $\ol{k}$ of
$B$ can be replaced by either $\check{k}$ or $\hat{k}$). For $a\in
H$, define $H^u$ and $H^u_{<a}$ to be $\ol{H}^u$ and
$\ol{H}^u_{<a}$ respectively. Define $H^l$ (resp. $H^r$) to be the
set of left-hatted (resp. right-hatted) entries of $H$.

We next give two different ways to generalize the polynomial $c_L$
defined in Section \ref{s.results}. Let $\gx\in\bN^d$. For
$B\in\cB_{sub}(\gm)$, define
\begin{equation}\label{e.wc_B}
c_{\gx,B}=\prod_{\fa\in B\atop a\text{
barred}}(y_{e_{\gx,B}(a)}-y_{f_B(a)}),
\end{equation}
where $e_{\gx,B}(\fa):=(\gx+\go(B^u_{<\fa}))_{|\fa|}$ and
$f_B(\fa):=|a|'+c(a)-r(a)$, $\fa\in B$. For $H\in\cH(\gm)$, define
\begin{equation}\label{e.hatted_coeff}
d_{\gx,H}=\prod_{\fa\in H^{l}}y_{e_{\gx,H}(\fa)}\prod_{\fa\in
H^{r}}(-y_{f_H(\fa)}),
\end{equation}
where $e_{\gx,H}(\fa):=(\gx+\go(H^u_{<\fa}))_{|\fa|}$ and
$f_H(\fa):=|\fa|'+c(\fa)-r(\fa)$, $\fa\in H$. In both
(\ref{e.wc_B}) and (\ref{e.hatted_coeff}), the empty product is
defined to equal 1.

Let $B\in\cB(\gm)$.  By definition,
\begin{equation}\label{e.star_nostar}
c_{\gl*B}=c_{\gl+\gr+1,B}.
\end{equation}
In addition, the equation
\begin{equation}\label{e.lr_coeff_hatted_coeff}
c_{\gx,B}=\sum_{H\in\cH(\gm)\atop \ol{H}=B}d_{\gx,H}
\end{equation}
expresses $c_{\gx,B}$ by expanding (\ref{e.wc_B}) in terms of
monomials in the $y_i$'s. Combining (\ref{e.star_nostar}) and
(\ref{e.lr_coeff_hatted_coeff}), we have
\begin{equation}\label{e.lr_coeff_h_b}
c_{\gl*B}=\sum_{H\in\cH(\gm)\atop \ol{H}=B}d_{\gl+\gr+1,H}.
\end{equation}

If $R\in\cR_{sub}(\gm)$, then define $\xy^R=\prod_{\fa\in
R}\left(x_{\fa}-y_{f_R(\fa)}\right)$. This definition is
consistent with the definition of $\xy^R$, $R\in\cR(\gm)$, given
in Section \ref{s.results}.

\section{Proofs of Nonnegativity Property and Positivity Criteria}\label{s.proof_nonnegativity}

Let $L\in\cL\cR\lmn$, and let $B=L|_\gm$. For $\fa\in B$, which we
also view as an entry of $L$, define $L^u_{\leq \fa}$ to be
$L^u_{< \fa}$ if $\fa$ is barred, or $L^u_{< \fa}$ appended with
$\fa$ if $\fa$ is not barred. Define
\begin{equation*}
\gD(\fa)=\go(L^u_{\leq\fa})_{|\fa|}-c(\fa)+r(\fa).
\end{equation*}
If $\fa$ is barred, then $\go(L^u_{\leq\fa})=\go(L^u_{<\fa})$;
hence $\gD(\fa)$ gives the difference between the two indices
$i-j$ of the factor $y_i-y_j$ corresponding to $\fa$ in
(\ref{e.c_L}). Therefore Propositions \ref{p.nonnegativity} and
\ref{p.positivity_crit} are equivalent to the following two lemmas
respectively.
\begin{lem}\label{l.tech_nonneg}
If $\gD(\fa)<0$ for some barred $\fa\in B$, then $\gD(\fb)=0$ for
some barred $\fb\in B$.
\end{lem}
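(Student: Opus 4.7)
The plan is to reduce the lemma to a column-by-column analysis of $\gD$, combined with a global minimality argument.

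First I would simplify $\gD$. For any barred $\fa$ at position $(r,c)$ of $\gm$ with value $k=|\fa|$, column strictness forces $\fa$ to be the unique entry of value $k$ in column $c$, so no unbarred $k$ lies above $\fa$ in that column. Hence $\go(L^u_{<\fa})_k=\gl_k+N_k(c)$, where $N_k(c)$ counts the unbarred $k$'s in columns $1,\ldots,c-1$ of $\gm$. Setting $\pi(c):=(\gl_j+N_j(c))_{j=1}^d$ gives $\gD(\fa)=\pi(c)_k+r-c$. Yamanouchi applied to the prefix read just before column $c$ shows that $\pi(c)$ is a partition, and $\pi(c+1)-\pi(c)\in\{0,1\}^d$ records the unbarred content of column $c$.

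The core of the proof is a single-column analysis. Fix $c$, let $v_{\gm'_c}<v_{\gm'_c-1}<\cdots<v_1$ be the strictly increasing values in column $c$ read top to bottom, and set $f(r):=\pi(c)_{v_r}+r$, so that any barred entry at row $r$ of column $c$ has $\gD=f(r)-c$. Since $v_{r+1}<v_r$ (column strictness) and $\pi(c)_{v_{r+1}}\geq\pi(c)_{v_r}$ (partition property), $f$ is strictly increasing in $r$. Thus, if $\fa$ has $\gD(\fa)<0$ and some barred row $r'>r$ of column $c$ satisfies $f(r')=c$, that entry is the desired $\fb$ with $\gD=0$.

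Otherwise, I would choose $\fa$ with $c$ minimal among barred entries with $\gD<0$, so that every barred entry in columns $<c$ has $\gD\geq 0$; if any equals $0$, we are done. If not, I would propagate via the recursion $\pi(c+1)=\pi(c)+(u_1(c),\ldots,u_d(c))$: an \emph{unbarred} entry at $(r^*,c)$ with $f(r^*)=c$ and value $k^*:=v_{r^*}$ satisfies $\pi(c)_{k^*}=c-r^*$, hence $\pi(c+1)_{k^*}=(c+1)-r^*$, so a barred entry of value $k^*$ at row $r^*$ in column $c+1$ would have $\gD=0$; the partition property of $\pi(c+1)$ together with column strictness of $\gm$ should force such an entry to exist.

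The main obstacle is the sub-case where $f$ genuinely skips over the value $c$ inside column $c$, leaving no row $r^*$ at all with $f(r^*)=c$. Here I would exploit the minimality of $c$ together with the partition-gap structure of $\pi(c)$, using the chain $\gl=\pi(1)\subseteq\pi(2)\subseteq\cdots\subseteq\pi(\gm_1+1)=\gn$ to propagate the deficit across adjacent columns by induction on the column index, ultimately producing the desired barred $\fb$ with $\gD(\fb)=0$.
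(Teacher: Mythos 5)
Your preliminary reductions are sound: for a barred $\fa$ of value $k$ in box $(r,c)$ of $\gm$ one indeed has $\gD(\fa)=\pi(c)_k+r-c$ with $\pi(c)$ a partition (a prefix content of a Yamanouchi word), and your function $f$ is strictly increasing within a column (this is essentially Lemma \ref{l.tech.np}(iv) of the paper). But the proof does not close. The within-column step only succeeds under the extra hypothesis that some \emph{barred} entry of column $c$ happens to sit at a row $r'$ with $f(r')=c$, and your mechanism for the remaining cases --- propagation to column $c+1$ --- is unjustified: even when an unbarred entry at $(r^*,c)$ satisfies $f(r^*)=c$, nothing forces column $c+1$ to contain a barred entry of value $k^*$ at row $r^*$. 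Column $c+1$ lies further to the left in the reverse Young diagram, hence is weakly \emph{shorter}; it may have no box in row $r^*$ at all, and if it does, that box may be unbarred or carry a different value. Finally, the ``main obstacle'' sub-case is explicitly left as a plan (``I would exploit\dots ultimately producing'') rather than an argument. So there is a genuine gap.

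The missing idea is to walk along the \emph{row} of $\fa$ rather than its column. Let $m$ be the rightmost entry of that row, so $c(m)=1$ and $\gD(m)=\go(L^u_{\leq m})_{|m|}+r(m)-1\geq 0$, with equality forcing $m$ to be barred (an unbarred $m$ contributes itself to the count). Moving left by one box, the term $-c$ drops by $1$ while the count $\go(L^u_{\leq\,\cdot})_{|\cdot|}$ weakly increases (values weakly decrease leftward along a row, and prefix contents of the Yamanouchi word are partitions), so $\gD$ drops by at most $1$ per step; moreover any drop forces the new box to be barred, since an unbarred box adds itself to the count and cancels the $-1$. Since $\gD\geq 0$ at column $1$ and $\gD(\fa)<0$, the first box at which $\gD$ becomes $\leq 0$ has $\gD=0$ and is reached by a unit drop (or is $m$ itself with $\gD(m)=0$), hence is barred. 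This discrete intermediate-value argument along the row is the paper's proof, via Lemma \ref{l.tech.np}(i)--(iii).
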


\begin{lem}\label{l.tech_pos_crit} The following are equivalent:\\
(i) $\gD(a)> 0$ for all barred $\fa\in B$.

\ni (ii) $\gD(\fa)>0$ for all barred $\fa\in B$ with $r(\fa)=1$.

\ni (iii) $\gD(a)\geq r(\fa)$ for all barred $\fa\in B$.
\end{lem}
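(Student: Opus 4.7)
The implications $(iii) \Rightarrow (i) \Rightarrow (ii)$ follow immediately from $r(\fa) \geq 1$. The content is therefore $(ii) \Rightarrow (iii)$, which I would prove by strong induction on $r(\fa)$. The base case $r(\fa) = 1$ is supplied by (ii): in this case (iii) reduces to $\gD(\fa) \geq 1$, which is the same as $\gD(\fa) > 0$.

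For the inductive step, fix a barred $\fa \in B$ at $(r,c)$ with value $k$ and $r \geq 2$; the goal is $\go(L^u_{<\fa})_{k} \geq c$. The cell $(r-1,c)$ belongs to $\gm$ (reverse Young shape) and has value $k' > k$ by column strictness; because $\fa$ is barred, nothing is added to the column word at $\fa$, so $L^u_{<(r-1,c)} = L^u_{<\fa}$. If some $(r',c)$ with $r' < r$ is barred, pick $\fb$ at the largest such $r'$. The intermediate cells $(r-1,c), \ldots, (r'+1,c)$ are all unbarred, and column strictness forces their values $k_{r-1} < \cdots < k_{r'+1}$ to lie strictly between $k$ and $v(\fb)$, so these values do not change the count of $v(\fb)$'s in the column word. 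Consequently the inductive hypothesis at $\fb$ gives $\go(L^u_{<\fa})_{v(\fb)} = \go(L^u_{<\fb})_{v(\fb)} \geq c$, and the Yamanouchi property at the prefix $L^u_{<\fa}$ propagates this to $\go(L^u_{<\fa})_{k} \geq \go(L^u_{<\fa})_{v(\fb)} \geq c$ since $k < v(\fb)$.

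The main obstacle I expect is the remaining case, in which every $(r',c)$ with $r' < r$ is unbarred, so column $c$ below $\fa$ contributes only the increasing sequence $k_{r-1} < \cdots < k_1$, each strictly above $k$, and the row-induction offers no barred cell in column $c$ to invoke. For this case I would add a secondary induction on the column number, applied to barred $k$'s in row $r$ at columns to the right of $\fa$; such entries can lie only in the range $[c^*, c-1]$, where $[c^*, c]$ is the contiguous block of row-$r$ cells of value $k$ forced by row semistrictness and $v(r,c) = k$. The key structural observation is that in the rectangle of cells $(r', c')$ with $r' \leq r$ and $c' \leq c$, every value is $\geq k$ and every cell of value exactly $k$ lies on row $r$. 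Chaining the column-inductive bound at the rightmost barred $k$ in row $r$ with column $< c$ together with the unbarred $k$'s immediately to its right in row $r$ yields only $\go(L^u_{<\fa})_{k} \geq c-1$; closing this off-by-one to obtain the required $\geq c$ is the essential technical subtlety, and I expect it to come from a careful application of the Yamanouchi property at the prefix $L^u_{\leq (1,c)}$ after the $r-1$ extra unbarred values from column $c$ have been inserted, which forces a jump in $\go(L^u_{<\fa})_{k}$ relative to $\go(L^u_{<\fa})_{k_{r-1}}$ at the "boundary" between $k$ and the inserted values.
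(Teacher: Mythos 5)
Your reduction to (ii) $\Rightarrow$ (iii) and your treatment of the case in which some barred entry lies below $\fa$ in its column are correct; that step is essentially the paper's observation that $\gD(\cdot)-r(\cdot)$ weakly decreases down a column, specialized to barred entries. The genuine gap is the case you yourself flag: when every entry of column $c(\fa)$ below $\fa$ is unbarred, what you offer is not a proof. The secondary induction is not well-founded as described: its bottom entry is a barred $|\fa|$ in row $r(\fa)$ with no barred $|\fa|$ to its right and (possibly) no barred entry below it in its column, and for that entry you have no argument at all; when row $r(\fa)$ contains no barred $|\fa|$ to the right of $\fa$, the unbarred $|\fa|$'s of the block contribute only $c(\fa)-c^*\leq c(\fa)-1$. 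Even in the favorable sub-case you concede the chain stops at $c(\fa)-1$, and the missing $+1$ cannot be recovered by inspecting entries of value $|\fa|$ in row $r(\fa)$: the deficit is made up by $\gl_{|\fa|}$ and by $|\fa|$'s in columns to the right of the block (rows below $r(\fa)$), and the entry whose existence actually contradicts (ii) when (iii) fails generally sits on the \emph{bottom} row, in a column $\leq c(\fa)$, with a value different from $|\fa|$. An induction organized around barred $|\fa|$'s in row $r(\fa)$, plus an unspecified appeal to Yamanouchi at $L^u_{\leq(1,c(\fa))}$, has no mechanism for locating that witness, so the "off-by-one" is not a technical subtlety but the substance of the implication.

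For comparison, the paper closes exactly this case with two monotonicity facts (Lemma \ref{l.tech.np}): first, $\gD-r$ weakly decreases down a column \emph{through unbarred entries as well} (your column step only exploits barred ones), so a barred $\fa$ with $\gD(\fa)<r(\fa)$ forces the bottom entry $m$ of its column to satisfy $\gD(m)\leq 0$; second, along a row the value of $\gD$ drops by at most one per box moving left and only when the left box is barred, while the rightmost entry of the row has $\gD\geq 0$ with equality forcing it to be barred. A discrete intermediate-value argument along the bottom row then produces a barred $\fb$ with $r(\fb)=1$ and $\gD(\fb)=0$, contradicting (ii). If you wish to keep your inductive framing, the repair is to replace your unresolved case by this descent to the bottom row and leftward traversal, rather than by a same-row induction on entries of value $|\fa|$.
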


\ni Before proving these two lemmas, we first establish some
properties of $\gD$.

\begin{lem}\label{l.tech.np} The function $\gD:B\to\bZ$ satisfies the following
properties:

\ni (i) If $a\in B$ and $c(a)=1$, then $\gD(a)\geq 0$, with
equality implying that $a$ is barred.

\ni (ii) If one moves left by one box, then $\gD$ can decrease by
at most one. If it does decrease by one, then the left box must be
barred.

\ni (iii) If $\gD(\fa)\leq 0$ for some $\fa\in B$, then
$\gD(\fb)=0$ for some barred $\fb\in B$ on the same row as $\fa$.

\ni (iv) The function $a\mapsto \gD(a)-r(a)$ is weakly decreasing
as one moves down along any column.
\end{lem}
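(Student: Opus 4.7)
The plan is to express $\go(L^u_{\leq \fa})_{|\fa|}$ in a clean closed form and then reduce each of (i)--(iv) to a Yamanouchi inequality at a well-chosen prefix of $L^u$. Since $\gl$'s columns in $\gl*\gm$ sit entirely to the right of $\gm$'s (so $\gl$ is exhausted by $L^u$ before $\gm$ is reached), the remark after the definition of $\cL\cR\lmn$ forces the total number of unbarred $k$'s contributed to $L^u$ by $L|_{\gl}$ to be exactly $\gl_k$; moreover, column strictness in $\gm$ makes $\fa$ the unique entry of value $|\fa|$ in its column, so no $|\fa|$'s appear above $\fa$ there. Writing $A_k(c)$ for the number of unbarred $k$'s among the entries of $\gm$ in columns $1,\ldots,c$, I therefore obtain
\[
\go(L^u_{\leq \fa})_{|\fa|} \;=\; \gl_{|\fa|} + A_{|\fa|}(c(\fa)), \qquad \gD(\fa) \;=\; \gl_{|\fa|} + A_{|\fa|}(c(\fa)) - c(\fa) + r(\fa).
\]

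Property (i) is then immediate: when $c(\fa)=1$, $A_{|\fa|}(1) = [\fa\text{ unbarred}]$, so $\gD(\fa) = \gl_{|\fa|} + [\fa\text{ unbarred}] + r(\fa) - 1 \ge 0$ with equality forcing $\gl_{|\fa|}=0$, $r(\fa)=1$, and $\fa$ barred. For (ii), let $\fb$ sit immediately to the left of $\fa$ in the same row, so $|\fb| \le |\fa|$ by row semistrictness; applying Yamanouchi to the prefix of $L^u$ obtained after reading all of $\gm$'s column $c = c(\fa)$ yields $\gl_{|\fb|} + A_{|\fb|}(c) \ge \gl_{|\fa|} + A_{|\fa|}(c)$. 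Substituting this together with $\go(L^u_{\leq \fb})_{|\fb|} = \gl_{|\fb|} + A_{|\fb|}(c) + [\fb\text{ unbarred}]$ (which holds because column $c+1$ above $\fb$ contains only values smaller than $|\fb|$) gives $\gD(\fb) - \gD(\fa) \ge [\fb\text{ unbarred}] - 1$, proving both clauses of (ii).

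Property (iv) is the subtlest. Let $\fa'$ lie immediately below $\fa$ at $(c,r-1)$, so $|\fa'| > |\fa|$ by column strictness. Reading order adds only $\fa'$ between $L^u_{\leq \fa}$ and $L^u_{\leq \fa'}$, giving $\go(L^u_{\leq \fa'})_{|\fa'|} = \go(L^u_{\leq \fa})_{|\fa'|} + [\fa'\text{ unbarred}]$ and $\go(L^u_{\leq \fa'})_{|\fa|} = \go(L^u_{\leq \fa})_{|\fa|}$. The technical heart of the lemma is to observe that when $\fa'$ is unbarred, Yamanouchi at the prefix $L^u_{\leq \fa'}$ (now containing the freshly-read $\fa'$ of value $|\fa'|$) forces $\go(L^u_{\leq \fa})_{|\fa'|-1} \ge \go(L^u_{\leq \fa})_{|\fa'|} + 1$; chaining Yamanouchi at $L^u_{\leq \fa}$ between indices $|\fa|$ and $|\fa'|-1$ then yields $\go(L^u_{\leq \fa})_{|\fa|} \ge \go(L^u_{\leq \fa})_{|\fa'|} + 1$, which is the strict inequality needed. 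When $\fa'$ is barred, plain Yamanouchi at $L^u_{\leq \fa}$ already suffices. In either case $\gD(\fa') - \gD(\fa) \le -1$, which is exactly (iv).

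Finally, (iii) follows from (i) and (ii) by a discrete intermediate-value argument. With $\fa_1,\ldots,\fa_c$ the row-$r(\fa)$ entries of $\gm$ at columns $1,\ldots,c(\fa)$, we have $\gD(\fa_1) \ge 0$ from (i) and $\gD(\fa_c) \le 0$ by hypothesis; let $j$ be the smallest index with $\gD(\fa_j) \le 0$. If $j=1$, then $\gD(\fa_1) = 0$ and (i) forces $\fa_1$ barred; if $j \ge 2$, then $\gD(\fa_{j-1}) \ge 1$ while (ii) gives $\gD(\fa_j) \ge 0$, so $\gD(\fa_j) = 0$ and the equality clause of (ii) forces $\fa_j$ barred. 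The main obstacle I anticipate is pinning down the right prefix of $L^u$ at which to apply Yamanouchi in each case---the ``end of $\gm$'s column $c(\fa)$'' prefix for (ii) and the $L^u_{\leq \fa'}$ prefix for (iv)---and then verifying that column strictness kills all the incidental contributions from column $c(\fa)$ of $\gm$, so that the count formula above is as clean as claimed.
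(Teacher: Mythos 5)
Your proof is correct and follows essentially the same route as the paper: each part is reduced to the Yamanouchi condition at a suitable prefix of $L^u$ together with column-strictness bookkeeping (your closed form $\go(L^u_{\leq \fa})_{|\fa|}=\gl_{|\fa|}+A_{|\fa|}(c(\fa))$ is just an explicit packaging of the increment arguments the paper uses), and (iii) is deduced from (i) and (ii) by the same discrete intermediate-value argument. The only stylistic difference is in (iv), where your case split on whether $\fa'$ is barred can be avoided: applying Yamanouchi once at the prefix $L^u_{\leq \fa'}$, which satisfies $\go(L^u_{\leq \fa'})_{|\fa|}=\go(L^u_{\leq \fa})_{|\fa|}\geq\go(L^u_{\leq \fa'})_{|\fa'|}$, gives the conclusion directly.
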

\begin{proof}
(i) Since $r(\fa)\geq 1$, $\gD(\fa)\geq 0$. If $\gD(a)=0$, then
$r(\fa)=1$ and $\go(L^u_{\leq a})_{|a|}=0$. The latter requirement
implies that $a$ is barred.

\ni (ii) If entry $m$ lies one box left of $\fa$, then
$-c(m)=-c(\fa)-1$, $r(m)=r(\fa)$, and $\go(L^u_{\leq m})_{|m|}\geq
\go(L^u_{\leq\fa})_{|m|}\geq \go(L^u_{\leq\fa})_{|\fa|}$, where
the first inequality is an equality if and only if $m$ is barred.
The second inequality is a consequence of the fact that the
unbarred column word of $L$ is Yamanouchi.

\ni (iii) Let $m$ be rightmost entry in the same row as $\fa$. If
$\gD(m)=0$, then by (i), $m$ is barred, so letting $\fb=m$ we are
done. Otherwise $\gD(m)>0$. By (ii), as one moves left from $m$ to
$a$ along the row the two entries lie on, one must encounter some
barred $\fb$ for which $\gD(b)=0$.

\ni (iv)  If entry $m$ lies one box below $\fa$, then
$\go(L^u_{\leq\fa})_{|\fa|}=\go(L^u_{\leq m})_{|\fa|}\geq
\go(L^u_{\leq m})_{|m|}$, since the unbarred column word of $L$ is
Yamanouchi.
\end{proof}

\begin{proof}[Proof of  Lemmas \ref{l.tech_nonneg} and \ref{l.tech_pos_crit}]
Lemma \ref{l.tech_nonneg} is a special case of Lemma
\ref{l.tech.np}(iii). In Lemma \ref{l.tech_pos_crit}, implications
(iii) $\implies$ (i) $\implies$ (ii) are clear. We prove (ii)
$\implies$ (iii). Suppose that $a\in B$ is a barred entry such
that $\gD(\fa)<r(\fa)$. Let $m$ be the bottom entry in column
$c(\fa)$. By Lemma \ref{l.tech.np}(iv), $\gD(m)<r(m)$. Since
$r(m)=1$, $\gD(m)\leq 0$. By Lemma \ref{l.tech.np}(iii),
$\gD(b)=0$ for some barred $b$ on the bottom row of $B$.
\end{proof}

\section{Generalization of Stembridge's Proof}\label{s.proof_stembridge}

In this section we list the main steps in the proof of Theorem
\ref{t.lr_coeff}.  The bulk of the technical work, however, namely
the proofs of Lemmas \ref{l.lra} and \ref{l.bad_guys_vanish}, is
taken up in the three subsequent sections. The underlying logic
and structure of our arguments in this and the following three
sections follows Stembridge \cite{Stem1}, who works out similar
results for ordinary Schur functions.

For $k\in \bN$, define the polynomial
$(x_j\,|\,y)^k=(x_j-y_1)\cdots(x_j-y_k)$. For
$\gx=(\gx_1,\ldots,\gx_d)\in\bN^d$, define
$a_\gx(x\,|\,y)=\det[(x_j\,|\,y)^{\gx_i}]_{1\leq i,j\leq d}$.

\begin{lem}\label{l.lra}  $\displaystyle a_{\gl+\gr}\xy s_{\gm}\xy=\sum\limits_{B\in
\cB(\gm)}c_{\gl*B} a_{\gl+\gr+\go(B^u)}\xy$.
\end{lem}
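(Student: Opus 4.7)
The plan is to generalize Stembridge's concise proof of the ordinary Littlewood--Richardson rule to the factorial setting, with the main algebraic tool being the telescoping identity
\begin{equation*}
(x|y)^m(x-y_l)=(x|y)^{m+1}+(y_{m+1}-y_l)(x|y)^m,
\end{equation*}
which lets a single factor $(x-y_l)$ be absorbed into an $(x|y)^{\bullet}$-tower at the cost of a $y$-difference correction.

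First I would expand both factors on the LHS combinatorially: write $a_{\gl+\gr}\xy=\sum_{\pi\in S_d}\sgn(\pi)\prod_i(x_{\pi(i)}|y)^{(\gl+\gr)_i}$ and $s_\gm\xy=\sum_{R\in\cR(\gm)}\prod_{a\in R}(x_a-y_{f_R(a)})$, obtaining
\begin{equation*}
a_{\gl+\gr}\xy s_\gm\xy=\sum_{\pi,R}\sgn(\pi)\prod_{k=1}^d\Bigl[(x_k|y)^{(\gl+\gr)_{\pi^{-1}(k)}}\prod_{a\in R_k}(x_k-y_{f_R(a)})\Bigr],
\end{equation*}
where $R_k$ is the set of value-$k$ boxes of $R$. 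For each $(\pi,R)$ I apply the telescoping identity to each bracketed block, processing the entries of $R_k$ in column-word order: every box is either \emph{absorbed} (raising the tower's exponent by one) or contributes a \emph{correction} $y$-difference. These two options correspond to unbarred vs.\ barred entries in a reverse barred tableau $B\in\cB(\gm)$ with $\wt B=R$, so summing over $R$ and telescoping choices is equivalent to summing over all $B\in\cB(\gm)$. For $\pi=\mathrm{id}$, the resulting $(x|y)$-exponents are exactly $(\gl+\gr+\go(B^u))_k$, and the correction for each barred $\fa\in B$ of value $k$ is exactly $(y_{(\gl+\gr+1)_k+\go(B^u_{<\fa})_k}-y_{f_B(\fa)})$; by (\ref{e.star_nostar}) these multiply to give the $\pi=\mathrm{id}$ term of $c_{\gl*B}\,a_{\gl+\gr+\go(B^u)}\xy$.

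The principal obstacle is that for $\pi\neq\mathrm{id}$ both the tower exponent $(\gl+\gr)_{\pi^{-1}(k)}+\go(B^u)_k$ and the correction index $(\gl+\gr)_{\pi^{-1}(k)}+\go(B^u_{<\fa})_k+1$ mix a $\pi$-indexed quantity with a $k$-indexed one, so they do not immediately recombine into the alternant $a_{\gl+\gr+\go(B^u)}\xy$ on the RHS. To handle this, I would further expand each correction $(y_e-y_f)=y_e+(-y_f)$ via the reverse hatted tableau identity (\ref{e.lr_coeff_hatted_coeff}), so that both sides of the claimed equality become signed sums indexed by pairs $(\pi,H)$ with $H$ a reverse hatted filling. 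The technical heart of the proof is then a sign-reversing involution on the ``bad'' $(\pi,H)$ pairs---those in which the $\pi$-shift of indices fails to match the combinatorial placement of the hats---cancelling their contributions in pairs and leaving a signed sum that resummed over $\pi$ reassembles into the RHS. Constructing this involution by exchange moves on adjacent values in $H$, in the spirit of Stembridge's ordinary argument, is the substance of the subsequent sections, and the hatted-tableau bookkeeping of Section \ref{s.hatted_tableaux} is designed precisely to make these moves well defined.
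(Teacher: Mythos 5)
Your first half is essentially the paper's Lemma \ref{l.pra_interm}: the telescoping identity $(x|y)^m(x-y_l)=(x|y)^{m+1}+(y_{m+1}-y_l)(x|y)^m$ is exactly the algebraic step used there, the absorbed/correction dichotomy is the unbarred/barred dichotomy, and the bookkeeping of the correction index via $\go(B^u_{<\fa})$ is correct. Up to and including the $\pi=\mathrm{id}$ computation, your argument matches the paper's.

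The gap is in your treatment of $\pi\neq\mathrm{id}$. Lemma \ref{l.lra} involves \emph{no cancellation}: its right-hand side ranges over \emph{all} $B\in\cB(\gm)$, and every pair $(\pi,H)$ on the left must be accounted for, not cancelled. A sign-reversing involution on ``bad'' $(\pi,H)$ pairs is the mechanism of the \emph{next} lemma (Lemma \ref{l.bad_guys_vanish}, which removes the non-Yamanouchi terms), and you have transplanted it to the wrong place; as stated it cannot work here, since for $\pi\neq\mathrm{id}$ the alternant $a_{\gl+\gr+\go(B^u)}\xy$ on the right still contains all $d!$ permutation terms, so there is nothing to discard. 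The correct device is a weight-preserving \emph{reindexing}: after expanding into hatted tableaux via (\ref{e.lr_coeff_hatted_coeff}), one uses the involutions of Section \ref{s.hatted_tableaux} to define, for each $\gs\in S_d$, a bijection $H\mapsto\gs H$ of $\cH(\gm)$ satisfying $d_{\gs\gx,\gs H}=d_{\gx,H}$ and $\go((\gs H)^u)=\gs\go(H^u)$ (Lemma \ref{l.invol_properties}, (\ref{e.perm_properties})). Substituting $\gs H$ for $H$ in the inner sum turns the $\gs$-dependent coefficient $d_{\gs(\gl+\gr+1),H}$ into the $\gs$-independent $d_{\gl+\gr+1,H}$ and pushes $\gs$ entirely into the exponent, which becomes $\gs(\gl+\gr+\go(H^u))$; the sum over $\gs$ then reassembles directly into $a_{\gl+\gr+\go(H^u)}\xy$, and regrouping the $H$ with $\ol{H}=B$ gives $c_{\gl*B}$. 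You correctly identified that the hatted tableaux and the exchange involutions are the needed machinery, but their role here is to provide this content-permuting, weight-preserving bijection, not a cancellation.
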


\begin{lem}\label{l.bad_guys_vanish} $\displaystyle \sum c_{\gl*B}
a_{\gl+\gr+\go(B^u)}\xy=0$, where the sum is over all
$B\in\cB(\gm)$ such that the unbarred column word of $\gl*B$ is
not Yamanouchi.
\end{lem}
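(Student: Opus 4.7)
The plan is to reduce the statement to a signed sum over pairs of (hatted tableaux, permutations), and then cancel it using a sign-reversing involution, in direct analogy with Stembridge's proof for ordinary Schur functions.

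First, I would use (\ref{e.lr_coeff_h_b}) to replace each $c_{\gl*B}$ by a sum over hatted refinements:
\begin{equation*}
\sum_{\substack{B\in\cB(\gm)\\ (\gl*B)^u\text{ not Yam.}}} c_{\gl*B}\, a_{\gl+\gr+\go(B^u)}\xy = \sum_{\substack{H\in\cH(\gm)\\ (\gl*\ol{H})^u\text{ not Yam.}}} d_{\gl+\gr+1,H}\, a_{\gl+\gr+\go(H^u)}\xy,
\end{equation*}
using that hatting does not alter the underlying column word, so $\go(B^u)=\go(H^u)$ and the Yamanouchi condition only depends on $\ol{H}$. Next I would expand each alternant as $a_\gx\xy = \sum_{\gs\in S_d}\sgn(\gs)\prod_j(x_{\gs(j)}\,|\,y)^{\gx_j}$, so the whole sum becomes a signed sum over pairs $(H,\gs)$ of a monomial weighted by $\sgn(\gs)\cdot d_{\gl+\gr+1,H}$.

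The crux is then to exhibit a sign-reversing, weight-preserving involution $\Psi$ on the set of non-Yamanouchi pairs $(H,\gs)$ that cancels the contributions in pairs. Following Stembridge, the involution is to be built from the \emph{first violation} of the Yamanouchi condition: reading $H^u$ in column-word order, locate the first entry $a=k+1$ at which the running count of $(k+1)$'s first exceeds that of $k$'s, together with the unique partner $k$ forced by that violation. The local rule should either (a) swap rows $k$ and $k+1$ in $\gs$ (flipping $\sgn(\gs)$) while rearranging the relevant $k$'s and $(k+1)$'s in $H$ so that the monomial is unchanged, or (b) when the swap is blocked by the row-weak / column-strict conditions, toggle a left hat with a right hat on the offending entry, which flips the sign through the $+y_{e_{\gx,H}}$ versus $-y_{f_H}$ asymmetry in (\ref{e.hatted_coeff}) while again preserving the monomial. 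The machinery to make this precise is exactly what Section \ref{s.hatted_tableaux} sets up, so I would invoke it here.

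The main obstacle is the verification that $\Psi$ is well-defined on hatted tableaux: the local switch must preserve the weak-row/strict-column conditions on the underlying (un-hatted) values, respect the ``Yamanouchi bookkeeping'' used to determine which rows of $\gs$ to transpose, and exactly match the monomial on both sides after the hat/permutation adjustments. A delicate point is that the statistics $e_{\gx,H}(a)$ and $f_H(a)$ depend on $\go(H^u_{<a})$ and on $(c(a),r(a))$ respectively, so shifts in the counting after the swap have to be tracked carefully. Once $\Psi$ is verified to be an involution without fixed points on the non-Yamanouchi set, and that contributions at $(H,\gs)$ and $\Psi(H,\gs)$ sum to zero, the lemma follows immediately.
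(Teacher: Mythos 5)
Your skeleton matches the paper's: reduce to hatted tableaux via (\ref{e.lr_coeff_h_b}), then cancel the non-Yamanouchi terms with a sign-reversing involution keyed to the first violation, built from the involutions $s_i$ of Section \ref{s.hatted_tableaux}. (Expanding each alternant over $\gs\in S_d$ and working with pairs $(H,\gs)$ is harmless but unnecessary; the paper cancels at the level of the antisymmetric $a_\gx\xy$ directly.) However, the involution itself is the entire content of the lemma, and as described it has a genuine gap.

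First, your case (b) cannot work. Toggling a left hat to a right hat on an entry $\fa$ replaces the factor $+y_{e_{\gx,H}(\fa)}$ of $d_{\gx,H}$ in (\ref{e.hatted_coeff}) by $-y_{f_H(\fa)}$; this ``flips the sign while preserving the monomial'' only when $e_{\gx,H}(\fa)=f_H(\fa)$, i.e., only when the corresponding factor of $c_{\gx,B}$ already vanishes. Moreover, all $2^m$ hattings of a fixed $B$ are already present in the sum (they \emph{are} the monomial expansion of $c_{\gx,B}$), so they cannot be re-paired among themselves a second time. In fact no second mechanism is needed: there is no ``blocked'' case. The correct involution takes $j$ minimal with $\gl+\go(H^u_{\leq j})\notin\cP_d$ and $i$ minimal with $(\gl+\go(H^u_{\leq j}))_i<(\gl+\go(H^u_{\leq j}))_{i+1}$, and replaces $H_{>j}$ (the columns strictly left of the first violating column) by $s_i(H_{>j})$, leaving $H_{\leq j}$ untouched. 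What then has to be verified --- and what you explicitly leave open as ``the main obstacle'' --- is (i) that the glued tableau is still row-weak and column-strict (this uses the reverse shape in an essential way), and (ii) the identity $(\gl+\gr+1+\go(H^u_{\leq j}))_i=(\gl+\gr+1+\go(H^u_{\leq j}))_{i+1}$ forced by the minimality of $j$ and $i$, which is precisely what lets $\gs_i$ pass through the fixed prefix in the computation of $e_{\gl+\gr+1,H}(\fa)$ for $\fa\in(H_{>j})^l$ and yields $d_{\gl+\gr+1,H}=d_{\gl+\gr+1,H^*}$ via Lemma \ref{l.invol_properties}, while $\gs_i(\gl+\gr+\go(H^u))=\gl+\gr+\go((H^*)^u)$ makes the alternants negatives of each other. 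Without identifying this restricted involution and the identity in (ii), the cancellation is not established.
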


\ni The following four corollaries follow easily from these two
lemmas.

\begin{cor}\label{c.lr2}
$\displaystyle a_{\gl+\gr}\xy s_{\gm}\xy=\sum c_{\gl*B}
a_{\gl+\gr+\go(B^u)}\xy$, where the sum is over all $B\in\cB(\gm)$
such that the unbarred column word of $\gl*B$ is Yamanouchi.
\end{cor}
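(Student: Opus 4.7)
The plan is very direct: Corollary \ref{c.lr2} is essentially a subtraction of Lemma \ref{l.bad_guys_vanish} from Lemma \ref{l.lra}. First I would write down the identity of Lemma \ref{l.lra} and partition the summation index set $\cB(\gm)$ into two disjoint pieces, namely
\begin{equation*}
\cB(\gm) = \cB_{\text{Yam}}(\gm) \sqcup \cB_{\text{non-Yam}}(\gm),
\end{equation*}
according to whether or not the unbarred column word of $\gl*B$ is Yamanouchi. This splits the right-hand side of Lemma \ref{l.lra} as a sum over $\cB_{\text{Yam}}(\gm)$ plus a sum over $\cB_{\text{non-Yam}}(\gm)$.

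Next I would apply Lemma \ref{l.bad_guys_vanish}, which asserts precisely that the sum over $\cB_{\text{non-Yam}}(\gm)$ is zero. Subtracting that identity from Lemma \ref{l.lra} leaves
\begin{equation*}
a_{\gl+\gr}\xy\, s_\gm\xy = \sum_{B\in\cB_{\text{Yam}}(\gm)} c_{\gl*B}\, a_{\gl+\gr+\go(B^u)}\xy,
\end{equation*}
which is the statement of the corollary.

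There is no real obstacle here; the work has all been done in the two preceding lemmas, and the corollary is purely a bookkeeping step that isolates the Yamanouchi contributions on the right-hand side. The only thing to remark is that the two lemmas are perfectly complementary: Lemma \ref{l.lra} expands a product of an alternant and a factorial Schur function over all reverse barred tableaux of shape $\gm$, while Lemma \ref{l.bad_guys_vanish} shows that the ``bad'' terms (those violating Yamanouchi) cancel among themselves. Combining them gives an expansion indexed only by tableaux that will eventually correspond to equivariant Littlewood-Richardson skew tableaux via the bijection $B\mapsto \gl*B$ described earlier.
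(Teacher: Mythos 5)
Your proposal is correct and is exactly the argument the paper intends: the corollary is obtained by splitting the sum in Lemma \ref{l.lra} over $\cB(\gm)$ into Yamanouchi and non-Yamanouchi parts and cancelling the latter via Lemma \ref{l.bad_guys_vanish}. The paper presents this as an immediate consequence of the two lemmas, so your bookkeeping step matches its reasoning.
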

\ni Suppose that $B\in\cB(\gm)$ is such that the unbarred column
word of $\es*B$ is Yamanouchi. If $B$ has barred entries, then by
Propositions \ref{p.nonnegativity} and \ref{p.positivity_crit}.5,
$c_{\es*B}= 0$. If $B$ has no barred entries, then  $B$ must be
the unique reverse tableau of shape $\gm$ and  content $\gm$: $B$
contains a 1 at the top of each column, and its entries increase
by 1 per box as one moves down any column. Thus, by setting
$\gl=\es$ in Corollary \ref{c.lr2}, we arrive at a new proof of
the bialternant formula for the factorial Schur function
(\cite{Go-Gr}, \cite{Mac1}):
\begin{cor}\label{c.alternant}
$\displaystyle s_{\gm}\xy=a_{\gm+\gr}\xy/a_\gr\xy$.
\end{cor}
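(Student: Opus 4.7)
The plan is to specialize Corollary \ref{c.lr2} to $\gl=\es$. Since $\es+\gr=\gr$ and the unbarred column word of $\es*B$ coincides with $B^u$, the specialization reads
\begin{equation*}
a_\gr\xy\,s_\gm\xy=\sum_{B}c_{\es*B}\,a_{\gr+\go(B^u)}\xy,
\end{equation*}
where the sum is over $B\in\cB(\gm)$ with $B^u$ Yamanouchi. I will argue that this sum collapses to the single term $a_{\gm+\gr}\xy$, so that dividing by the nonzero polynomial $a_\gr\xy$ yields the claimed formula.

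Partition the summands according to whether $B$ contains any barred entries. If $B$ is entirely unbarred, then $B$ is an ordinary reverse tableau whose column word is Yamanouchi, and a column-by-column induction using strict increase down each column together with the Yamanouchi prefix inequalities forces $B$ to be the unique reverse tableau whose $c$-th column reads $1,2,\ldots,h_c$ from top to bottom, where $h_c$ is the height of column $c$. For this $B$, $\go(B^u)=\gm$ and $c_{\es*B}=1$ (the product in (\ref{e.c_L}) is empty), contributing exactly $a_{\gm+\gr}\xy$.

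If instead $B$ has at least one barred entry, the claim is $c_{\es*B}=0$. By Propositions \ref{p.nonnegativity} and \ref{p.positivity_crit}.5 it is enough to produce a barred $\fb\in B$ with $r(\fb)=1$ and $\go(B^u_{<\fb})_{|\fb|}<c(\fb)$, i.e.\ $\gD(\fb)\leq 0$, since such an $\fb$ delivers either a vanishing factor outright or one forced to vanish via Lemma \ref{l.tech_nonneg}, while nonnegativity keeps the remaining factors nonnegative. One produces such an $\fb$ by descending the leftmost column $c_0$ containing a bar (relative to the staircase produced by the preceding all-unbarred columns): strict column increase together with the Yamanouchi condition forces every entry of column $c_0$ below the first bar to also be barred, so in particular the bottom entry of column $c_0$ is barred, and a direct count of its value in $B^u_{<\fb}$ then yields $\gD(\fb)\leq 0$.

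The main obstacle is this last propagation argument. Stembridge's original nonequivariant proof never sees a barred entry, whereas here one must show that, with no $\gl$-contribution available to loosen the Yamanouchi slack, any single bar cascades down its column and produces a bottom-row bar forcing a zero factor in $c_{\es*B}$. The key input is Lemma \ref{l.tech.np}(iii)--(iv), which allows us to trace $\gD$ downward along a column and convert any barred-entry obstruction into a vanishing factor on the bottom row.
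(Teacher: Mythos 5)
Your proposal is correct and follows essentially the same route as the paper: specialize Corollary \ref{c.lr2} to $\gl=\es$, observe that the unique bar-free Yamanouchi $B$ is the staircase tableau of content $\gm$ with $c_{\es*B}=1$, and kill all barred terms via Propositions \ref{p.nonnegativity} and \ref{p.positivity_crit}. Your additional column-descent argument producing a barred bottom-row entry is not needed (and as written it silently uses row semistrictness as well as column strictness and the Yamanouchi condition): once $\gl=\es$, any barred entry $\fa$ already violates criterion 4 of Proposition \ref{p.positivity_crit}, since column strictness forces $\go(B^u_{<\fa})_{|\fa|}\leq c(\fa)-1$, which together with nonnegativity is exactly how the paper disposes of these terms.
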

\ni Dividing both sides of the equation in Corollary \ref{c.lr2}
by $a_\gr\xy$ and applying Corollary \ref{c.alternant} yields
\begin{cor}\label{c.lr_rule}
$s_\gl\xy s_{\gm}\xy=\sum c_{\gl*B} s_{\gl+\go(B^u)}\xy$, where
the sum is over all $B\in\cB(\gm)$ such that the unbarred column
word of $\gl*B$ is Yamanouchi.
\end{cor}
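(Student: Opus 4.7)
The plan is to obtain Corollary \ref{c.lr_rule} as an immediate division-by-$a_\gr\xy$ consequence of Corollary \ref{c.lr2}, using the bialternant formula of Corollary \ref{c.alternant} to recognize the resulting quotients as factorial Schur functions on both sides.

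Concretely, I start from the identity
\begin{equation*}
a_{\gl+\gr}\xy\, s_\gm\xy = \sum c_{\gl*B}\, a_{\gl+\gr+\go(B^u)}\xy
\end{equation*}
of Corollary \ref{c.lr2}, where the sum runs over those $B\in\cB(\gm)$ for which the unbarred column word of $\gl*B$ is Yamanouchi. Dividing both sides by $a_\gr\xy$, the left-hand side becomes $s_\gl\xy\, s_\gm\xy$ by Corollary \ref{c.alternant} (applied to $\gl$ in place of $\gm$). For the right-hand side I want to rewrite each $a_{\gl+\gr+\go(B^u)}\xy/a_\gr\xy$ as $s_{\gl+\go(B^u)}\xy$, which requires knowing that $\gl+\go(B^u)\in\cP_d$ so that Corollary \ref{c.alternant} applies directly.

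The one point to check is therefore that whenever $\gl*B$ has Yamanouchi unbarred column word, $\gl+\go(B^u)$ is actually a partition. This is the only (mild) subtlety. It follows because the unbarred content of $\gl*B$ equals $\gl+\go(B^u)$: the remark after the definition of $\cL\cR\lmn$ notes that in any Yamanouchi filling of $\gl*\gm$, row $i$ of $\gl$ consists of $\gl_i$ unbarred $i$'s, so $\gl$ contributes exactly $\gl_i$ to the $i$-th coordinate of the total unbarred content, while $B$ contributes $\go(B^u)_i$. Now any Yamanouchi word on $\{1,\ldots,d\}$ has partition content, because upon reading the entire word one has accumulated at least as many $k$'s as $(k+1)$'s, i.e.\ $(\gl+\go(B^u))_k\geq (\gl+\go(B^u))_{k+1}$ for all $k$. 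Hence $\gl+\go(B^u)\in\cP_d$.

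Applying Corollary \ref{c.alternant} termwise on the right and comparing with the simplified left side yields
\begin{equation*}
s_\gl\xy\, s_\gm\xy = \sum c_{\gl*B}\, s_{\gl+\go(B^u)}\xy
\end{equation*}
with the sum over the same indexing set as in Corollary \ref{c.lr2}, which is exactly the statement. There is no substantive obstacle beyond the content-is-a-partition check above; the computation is essentially Stembridge's move from the alternant identity to the Schur-function identity, lifted verbatim to the factorial setting because Corollary \ref{c.alternant} has the same shape as the classical Jacobi bialternant formula.
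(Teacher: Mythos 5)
Your proof is correct and is essentially the paper's own argument: divide the identity of Corollary \ref{c.lr2} by $a_\gr\xy$ and apply the bialternant formula of Corollary \ref{c.alternant} to both sides. The extra verification that $\gl+\go(B^u)$ is a partition (via the Yamanouchi condition) is a detail the paper leaves implicit, and you handle it correctly.
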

\ni Regrouping the terms in this summation:
\begin{equation*}
s_\gl\xy s_\gm\xy =
\sum_\gn\left(\sum_{B\in\cB(\gm)\atop{(\gl*B)^u\text{
Yamanouchi}\atop \gl+\go(B^u)=\gn}}c_{\gl*B}\right) s_\gn\xy
=\sum_\gn\left(\sum_{L\in\cL\cR\lmn}c_L\right) s_\gn\xy.
\end{equation*}
This proves Theorem \ref{t.lr_coeff}.

\begin{rem} Let $\gk\in\cP_d$, $\gk\leq\gm$, i.e., $\gk_i\leq \gm_i$,
$i=1,\ldots,d$. One can extend our analysis to factorial skew
Schur functions of the form $s_\gmk\xy$ (see \cite{Mac1}). One
replaces $\cB(\gm)$ with $\cB(\gmk)$, the set of all reverse
barred tableaux of shape $\gmk$. All above definitions extend
naturally.  For example, for $B\in\cB(\gmk)$, $c_{\gl*B}$ is
computed just as for $B\in\cB(\gm)$, but with all boxes of
$\gk\subset\gm$ assumed to be empty. All proofs are virtually
unchanged, modified only by formally replacing $\gm$ by $\gmk$. As
a generalization of Corollary \ref{c.lr_rule}, we obtain
$$s_\gl\xy s_{\gmk}\xy=\sum c_{\gl*B} s_{\gl+\go(B^u)}\xy,$$ where the
sum is over all $B\in\cB(\gmk)$ such that $(\gl*B)^u$ is
Yamnaouchi. This generalizes Zelevinsky's extension of the
Littlewood-Richardson rule (\cite{Stem1}, \cite{Ze}).
\end{rem}

\section{Involutions on Reverse Hatted Tableaux}\label{s.hatted_tableaux}

In his proof, Stembridge \cite{Stem1} utilizes involutions on
Young tableaux introduced by Bender and Knuth \cite{Be-Kn}. There
is an analogous set of involutions on $\cH(\gm)$ which satisfy
properties required for the proofs of Lemmas \ref{l.lra} and
\ref{l.bad_guys_vanish} (see Lemma \ref{l.invol_properties}). We
remark that we were unable to find a suitable set of involutions
on $\cB(\gm)$, and this is what initially led us to examine
$\cH(\gm)$. If the involutions on $\cH(\gm)$ are restricted to
$\cR(\gm)$, then the Bender-Knuth involutions are recovered.

\subsection{The Involutions $\mathbf{s_1,\ldots,s_{d-1}}$ of $\cH(\gm)$}

Let $H\in\cH(\gm)$, and let $i\in\{1,\ldots,d-1\}$ be fixed. Then
an entry $a$ of $H$ with value $i$ or $i+1$ is
\begin{itemize}
\item \textbf{free} if there is no entry of value $i+1$ or $i$
respectively in the same column;
\item \textbf{semi-free} if there is an entry of value $i+1$ or
$i$ respectively in the same column, and at least one of the two
is hatted; or
\item \textbf{locked} if there is an entry of value $i+1$ or $i$
respectively in the same column, and both entries are unhatted.
\end{itemize}
Note that any entry of value $i$ or $i+1$ must be exactly one of
these three types, and each hatted entry of value $i$ or $i+1$
must be either free or semi-free. In any row, the free entries are
consecutive. Semi-free entries come in pairs, one below the other,
as do locked entries.

To define the action of $s_i$ on $H\in\cH(\gm)$, we first consider
how it modifies the free entries of $H$ (see Example
\ref{ex.involution}):

\begin{itemize}

\item[1.]  Let $S$ be a maximal string of free entries with values
$i$ and $i+1$ on some row of $H$. Let $S^0$, $S^l$, and $S^r$
denote the unhatted, left-hatted, and right-hatted entries of $S$
respectively. Modify $S^{\circ}\cup S^{l}$, as follows:
\begin{enumerate}
\item[A] Change the value of each entry of value $i$ to $i+1$ and
each entry of value $i+1$ to $i$, without changing whether or not
it has a left hat.

\item[B] Swap the entries of value $i$ with those of value $i+1$:
remove all entries of value $i$; then move each entry of value
$i+1$, beginning with the rightmost one, into the rightmost
available empty box; then put the removed entries of value $i$
back into the empty boxes of $B$, preserving the relative order of
barred and unbarred entries.
\end{enumerate}
In this step, $S^{\circ}\cup S^{l}$ has been modified. No other
entries of $H$, in particular no entries of $S^r$, have been
modified, changed, or moved. Denote the modified string $S$ by
$S_1$.  A potential problem has been introduced: the values of the
entries of $S_1$ may not be weakly increasing as one moves from
left to right. In step 2 we correct for this.

\item[2.] Let $(S_1^r)_i$ and $(S_1^r)_{i+1}$ denote the entries
of $S_1^r$ of value $i$ and $i+1$ respectively.  Beginning with
the leftmost entry $\fa\in (S_1^r)_i$, let $\fb$ be the entry of
$S_1$ to the left of $\fa$. If $\fb$ has value $i+1$, then switch
the entries $\fb$ and $\fa$, and then change the left entry from
$\wh{i}$ to $\wh{i+1}$. Now move right to the next entry of
$(S_1^r)_{i}$, and repeat this procedure until it has been
performed on all entries of $(S_1^r)_{i}$. Next, beginning with
the rightmost entry $\fa\in (S_1^r)_{i+1}$, let $\fb$ be the entry
of $S_1$ to the right of $\fa$. If $\fb$ has value $i$, then
switch the entries $\fb$ and $\fa$, and then change the right
entry from $\wh{i+1}$ to $\wh{i}$. Now move left to the next entry
of $(S_1^r)_{i+1}$, and repeat this procedure until it has been
performed on all entries of $(S_1^r)_{i+1}$.

Upon completion, we denote by $S_2$ the resulting string obtained
by modifying $S_1$.  It is weakly increasing.
\end{itemize}

\noindent  We next consider how $s_i$ modifies the semi-free
entries of $H$:

\begin{itemize}
\item[3.] For a semi-free pair consisting of two entries lying in
the same column of $H$, each entry removes its hat (if it has one)
and places it on top of the other entry.
\end{itemize}
The reverse tableau $s_iH$ is obtained by applying steps 1 and 2
to each maximal string $S$ of free entries of $H$ (replacing $S$
by $S_2$) and then applying step 3 to each semi-free pair.

\begin{ex}\label{ex.involution}
We illustrate steps 1 and 2.  Suppose that $i=2$, and $S$ consists
of the following maximal string of consecutive free entries lying
along some row of $H$:
\begin{align*}
S=&2\ \hat{2}\ \check{2}\ 2\ 2\ \check{2}\ \hat{2}\ 2\ 3\
\check{3}\ \hat{3}\ \check{3}\\
S^0\cup S^l=&2\ \ph{\hat{2}}\ \check{2}\ 2\ 2\ \check{2}\
\ph{\hat{2}}\ 2\ 3\ \check{3}\ \ph{\hat{3}}\ \check{3}\\
&3\ \ph{\hat{2}}\ \check{3}\ 3\ 3\ \check{3}\
\ph{\hat{2}}\ 3\ 2\ \check{2}\ \ph{\hat{3}}\ \check{2}\\
&2\ \ph{\hat{2}}\ \check{2}\ \check{2}\ 3\ \check{3}\
\ph{\hat{2}}\ 3\ 3\ \check{3}\ \ph{\hat{3}}\ 3\\
S_1=&2\ \hat{2}\ \check{2}\ \check{2}\ 3\ \check{3}\ \hat{2}\ 3\
3\
\check{3}\ \hat{3}\ 3\\
S_2=&2\ \hat{2}\ \check{2}\ \check{2}\ 3\ \hat{3}\ \check{3}\ 3\
3\
\check{3}\ \hat{3}\ 3\\
\end{align*}
In line 2 we remove the entries of $S^r$ from the picture for
convenience, in order to focus attention on the operations
performed in step 1, which only affect $S^0\cup S^{l}$. In lines 3
and 4 the results of applying steps 1A and 1B successively to
$S^0\cup S^{l}$ are shown.  In line 5, the removed entries from
$S^r$ are replaced. In line 6, the result of applying step 2 to
$S_1$ is shown. Only two entries are changed in this step.
\end{ex}

This algorithm defines maps $b_l:H^l\to (s_iH)^l$ and $b_r:H^r\to
(s_iH)^r$, as follows. If $\fa\in H^l$ is free, then in step 1A,
the value of $a$ is either increased or decreased by 1; in step
1B, it is then moved to a different box; in step 2, this new entry
in this new box is moved at most one box and changed by at most
one in value, resulting in the entry we denote by $b_l(\fa)$. If
$\fa\in H^r$ is free, then $\fa$ is unchanged in step 1 and moved
at most one box and changed by at most one in value in step 2.
Denote the resulting entry by $b_r(\fa)$.  If $\fa\in H^l$ or
$\fa\in H^r$ is semi-free, then $b_l(\fa)$ or $b_r(\fa)$ is the
entry in $s_iH$ which it gives its hat to.

In Example \ref{ex.involution}, if $a$ is the rightmost entry of
$S$, which is a $\check{3}$, then $b_l(a)$ is the $\check{2}$
which is the fourth entry of $S_2$ from left. These two entries
are, of course, entries of $H$ and $s_iH$ respectively.

\begin{lem}
$s_i$ is an involution on $\cH(\gm)$, $i\in\{1,\ldots,d-1\}$.
\end{lem}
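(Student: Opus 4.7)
The statement requires us to verify two things: that $s_iH\in\cH(\gm)$ for every $H\in\cH(\gm)$, and that $s_i\circ s_i=\mathrm{id}$. Since $s_i$ acts locally---step 3 modifies each semi-free pair in isolation (a single column), steps 1--2 modify each maximal free string in isolation (a single row), and entries of value outside $\{i,i+1\}$ together with locked pairs are untouched---the analysis reduces to independent local checks on each modified block. The classification of each entry as free, semi-free, or locked depends only on values in its column outside the modified block, so this classification is preserved by $s_i$; consequently the second application of $s_i$ processes exactly the same blocks as the first.

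For semi-free pairs, step 3 merely exchanges hats: values and positions are fixed, so column strictness and row weak-increase hold trivially, and applying step 3 twice returns the original. For the action on a free string $S$ in a row, column strictness is automatic because the column of each free entry contains no other entry of value $i$ or $i+1$, and the remaining entries in that column (of value $<i$ above or $>i+1$ below) are compatible with either replacement value in $\{i,i+1\}$. Row weak-increase within the output $S_2$ is precisely the purpose of step 2, and compatibility with neighbors of $S$ outside the value range $\{i,i+1\}$ is automatic since $S_2$ keeps values in $\{i,i+1\}$.

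The crux is involutivity on free strings. Step 1 restricted to $S^\circ\cup S^l$ is a refinement of the classical Bender-Knuth swap on two adjacent values, preserving the relative order of left-hatted and unhatted entries within each value class; two applications recover the original multiset arrangement in those boxes. Step 2 consists of two mirror passes: the first iterates over right-hatted $i$'s leftmost first, moving each one step to the left and converting it to $\widehat{i+1}$ whenever the left neighbor has value $i+1$; the second iterates over right-hatted $(i+1)$'s rightmost first, moving each one step to the right and converting it to $\widehat i$ whenever the right neighbor has value $i$. These two local rules are inverse to each other, so each $\widehat{i+1}$ created by the first pass in one application of $s_i$ becomes an input to the second pass in the next application of $s_i$ and is converted back to $\widehat i$ at its original position, and similarly each $\widehat i$ created by the second pass is restored by a subsequent first-pass swap. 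The main obstacle is verifying this precisely: one must track how step 1's rearrangement of $S^\circ\cup S^l$ in the second application reconfigures the values surrounding each right-hatted entry, and show by careful bookkeeping that the atomic swaps of step 2 in the two applications pair up and cancel, so that every entry's position, value, and hat are restored.
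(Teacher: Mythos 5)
Your overall strategy is the same as the paper's (reduce to independent local checks on semi-free pairs and on maximal free strings, then verify well-definedness and $s_i^2=\mathrm{id}$ stringwise), but two essential steps are missing. First, in the well-definedness check you only verify row-compatibility of $S_2$ with neighbors whose values lie outside $\{i,i+1\}$. The dangerous case is a \emph{non-free} neighbor of value $i+1$ immediately to the left of the maximal free string $S$ (or a non-free $i$ immediately to its right): if, say, a locked $i+1$ sat just left of $S$, then after the swap the left end of $S_2$ could be an $i$ and row semistrictness would fail. The whole content of well-definedness is to rule such configurations out, and this is where the reverse shape of $\gm$ enters: because higher rows are shorter and right-justified, non-freeness of a value-$(i+1)$ entry propagates to value-$(i+1)$ entries on one side of it in its row (and dually for value-$i$ entries), so maximality of $S$ forces every value-$(i+1)$ entry in the row to lie weakly right of $S$ and every value-$i$ entry to lie weakly left of it. Your proposal never addresses this, so the claim $s_iH\in\cH(\gm)$ is not established.

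Second, the involutivity argument is only named, not carried out: you state that the two passes of step 2 are ``inverse to each other'' and that the atomic swaps across the two applications of $s_i$ ``pair up and cancel,'' but you yourself flag that this requires tracking how step 1 of the second application reconfigures the values around each right-hatted entry, and that bookkeeping is exactly the crux that remains undone. (Your intermediate assertion that applying step 1 twice ``recovers the original multiset arrangement in those boxes'' is also too strong as stated: since the right-hatted entries move during step 2, a second application of step 1 returns the same entries of $S^{\circ}\cup S^{l}$ only up to relative order, not necessarily to the same boxes.) The paper closes this gap by observing that the free boxes of $H$ and $s_iH$ coincide, so it suffices to check $s_i^2(S)=S$ string by string; that re-applying step 1 to $(s_iS)^{\circ}\cup(s_iS)^{l}$ retrieves the same entries in the same relative order; and that $b_r^2(\fa)=\fa$ for every right-hatted $\fa$, which together restore every position, value, and hat. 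As written, your proposal is an outline of this argument with its decisive verifications left open.
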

\begin{proof} We begin by showing that $s_iH\in\cH(\gm)$, i.e.,
$s_iH$ is row semistrict and column strict. The only nonobvious
condition is that if $S$ is any maximal string of free entries of
$H$ lying along some row, and $S_2$ the string that replaces it in
$s_iH$, then $s_iH$ weakly increases along the left and right
boundaries of $S_2$. To see this, note that if any entry of $H$ of
value $i+1$ is free, then so are all entries of value $i+1$ to the
right of it in the same row; and if any entry of $H$ of value $i$
is free, then so are all entries of value $i$ to the left of it in
the same row. Thus by the maximality of $S$, there are no entries
of $H$ of value $i$ in the same row and to the right of $S$, and
there are no entries of $H$ of value $i+1$ in the same row and to
the left of $S$. Hence changing values of $S$ from $i$ to $i+1$
and visa-versa to form $S_2$ does not affect the row
semistrictness of $H$ along its boundaries.

We next show that $s_i^2=\text{id}$. Since the free entries of $H$
lie in the same boxes as the free entries of $s_iH$, it suffices
to show that $s_i^2(S)=S$ for any maximal string $S$ of free
entries of $H$ (where $s_iS$ is defined to be $s_iH$ restricted to
$S$). If step 1 is applied to $(s_iS)^{\circ}\cup (s_iS)^l$, then
one sees that the same entries of $S^{\circ}\cup S^l$ are
retrieved, although possibly not in their same boxes. However the
relative order of the entries is the same. Now one checks that for
$\fa\in H^r$, $b_r^2(\fa)=\fa$.
\end{proof}

Let $\gs_i$ be the simple transposition of the permutation group
$S_d$ which exchanges $i$ and $i+1$.  The involution $s_i$
satisfies the following properties:
\begin{lem}\label{l.invol_properties} Let $H\in\cH(\gm)$, $\fa\in
H^l$, and $\fb\in H^r$.  Then\\
(i) $|b_l(\fa)|=\gs_i|\fa|$\\
(ii) $\go((s_iH)^u)=\gs_i\go(H^u)$.\\
(iii) $\go((s_iH)^u_{<b_l(\fa)})=\gs_i\go(H^u_{<\fa})$\\
(iv) $e_{\gs_i\gx,s_iH}(b_l(\fa))=e_{\gx,H}(\fa)$\\
(v) $f_{s_iH}(b_r(b))=f_H(b)$\\
(vi) $d_{\gs_i\gx,s_iH}=d_{\gx,H}$
\end{lem}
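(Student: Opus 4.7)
The plan is to verify the six items in order, since each builds on the previous. Throughout, the argument is a case analysis on whether the entry under consideration is free or semi-free with respect to the $s_i$-action.

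Item (i) follows from the construction of $b_l$. If $\fa \in H^l$ is free, step 1A replaces $|\fa|$ by $\gs_i|\fa|$ while preserving the hat type, and the subsequent shuffling in step 1B and step 2 does not alter values; if $\fa$ is semi-free, then by definition $b_l(\fa)$ is the column-partner of $\fa$, whose value is forced to be $\gs_i|\fa|$ by column-strictness. Item (ii) is a content-counting argument: entries of values outside $\{i,i+1\}$ are untouched by $s_i$; within each free string, step 1A swaps $i \leftrightarrow i+1$ on unbarred entries (and on left-hatted entries) while step 2 only reorganizes right-hatted entries without altering unbarred counts; each semi-free pair with both entries hatted contributes nothing to the unbarred count of values $i, i+1$, while each semi-free pair with exactly one hatted entry is transported by $s_i$ to a semi-free pair of the opposite parity (unbarred $i$ on top $\leftrightarrow$ unbarred $(i+1)$ on bottom), realizing the $\gs_i$-swap; and locked pairs contribute $+1$ to both $\go_i$ and $\go_{i+1}$ and are untouched.

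Item (iii) is the main technical step, and its proof requires a careful bookkeeping argument showing that the net effect of the step-1B rearrangement within a free string and the step-3 hat-exchange within a semi-free pair leaves the partial unbarred content of the reading word invariant under $\gs_i$. Given (i)--(iii), item (iv) is then immediate by unpacking the definitions:
$$
e_{\gs_i\gx, s_iH}(b_l(\fa))
= \bigl(\gs_i\gx + \go((s_iH)^u_{<b_l(\fa)})\bigr)_{|b_l(\fa)|}
= \bigl(\gs_i(\gx + \go(H^u_{<\fa}))\bigr)_{\gs_i|\fa|}
= \bigl(\gx + \go(H^u_{<\fa})\bigr)_{|\fa|}
= e_{\gx, H}(\fa).
$$

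Item (v) is a direct calculation on $\fb \in H^r$. If $\fb$ is free and left untouched by step 2, $f$ is trivially preserved; if $\fb$ moves by one box in its row and its value changes by $\pm 1$ (for instance $\hat{i}$ becomes $\hat{i+1}$ one column to the left), the shifts in $|\fb|'$ and $c(\fb)$ cancel in $f_H(\fb) = |\fb|'+c(\fb)-r(\fb)$. If $\fb$ is semi-free, then $b_r(\fb)$ sits in an adjacent row of the same column with opposite value, and the shifts in $r(\fb)$ and $|\fb|'$ again cancel. Finally, item (vi) will be a formal consequence: using the bijections $b_l : H^l \to (s_iH)^l$ and $b_r : H^r \to (s_iH)^r$, I will pair off the factors in the two products defining $d_{\gs_i\gx, s_iH}$ and $d_{\gx, H}$, and apply (iv) and (v) term by term. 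The hard part will be (iii), where the interplay between the reading order and the positional rearrangement of entries under $s_i$ has to be controlled entry by entry.
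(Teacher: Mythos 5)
Your proposal is correct and follows essentially the same route as the paper: (i)--(iii) are read off from the construction of $s_i$ by the same free/semi-free case analysis, (iv) is the identical formal computation from (i) and (iii), (v) is the same check that each of the possible one-box moves of $b_r(\fb)$ is compensated by the $\pm 1$ change in value so that $f=|\fb|'+c(\fb)-r(\fb)$ is preserved, and (vi) is obtained by pairing factors via $b_l$ and $b_r$. The one item you defer, (iii), is precisely the one the paper also leaves at the level of ``follows from the construction of $s_i$,'' so your write-up is no less complete than the source.
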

\begin{proof}
(i), (ii), and (iii) follow from the construction of $s_i$.

\ni (iv) By parts (iii) and (i),
\begin{align*}
e_{\gs_i\gx,s_iH}(b_l(\fa))&=(\gs_i\gx+\go((s_iH)^u_{<b_l(\fa)}))_{|b_l(\fa)|}\\
&=(\gs_i\gx+\gs_i\go(H^u_{<\fa}))_{|b_l(\fa)|}\\
&=(\gs_i\gx+\gs_i\go(H^u_{<\fa}))_{\gs_i|\fa|}\\
&=(\gs_i(\gx+\go(H^u_{<\fa})))_{\gs_i|\fa|}
=(\gx+\go(H^u_{<\fa}))_{|\fa|}
=e_{\gx,H}(\fa)
\end{align*}

\ni (v) Under $b_r$, the entry $\fb$ is either kept in place,
moved up, down, left, or right by one box. In these cases, its
value is either left unchanged, decreased, increased, increased,
or decreased by one respectively.  The result now follows from the
definition of $f_H$.

\ni (vi) This is a consequence of (i), (ii), and
(\ref{e.hatted_coeff}).
\end{proof}

Let $H\in\cH(\gm)$ and let $\gs\in S_d$. Choose some decomposition
of $\gs$ into simple transpositions:
$\gs=\gs_{i_1}\cdots\gs_{i_t}$. Define $\gs H:=s_{i_1}\cdots
s_{i_t}H$. Although $\gs H$ depends on the decomposition chosen
for $\gs$, by Lemma \ref{l.invol_properties}(ii) and (vi),
\begin{equation}\label{e.perm_properties}\go((\gs H)^u)=\gs\go(H^u)\qquad\text{and}\qquad
d_{\gs\gx,\gs H}=d_{\gx,H}.
\end{equation}
In particular, both $\go((\gs H)^u)$ and $d_{\gs\gx,\gs H}$ are
independent of the decomposition of $\gs$.

\section{Proof of Lemma \ref{l.lra}}\label{s.proof_lemma1}

Lemma \ref{l.lra} is a generalization of \cite[(1)]{Stem1}. In
proving \cite[(1)]{Stem1}, Stembridge uses the simple fact that if
$S$ is a tableau and $\gx=(\gx_1,\ldots,\gx_d)\in\bN^d$, then
$x^\gx x^S=x^{\gx+\go(S)}$. The generalization of this fact which
we will need in order to prove Lemma \ref{l.lra} is the following
lemma. Define $\xy^\gx=(x_1\,|\,y)^{\gx_1}\cdots
(x_d\,|\,y)^{\gx_d}$.
\begin{lem}\label{l.pra_interm} Let $R\in\cR_{sub}(\gm)$ and let
$\gx\in\bN^d$.  Then
$$(x \mid y)^{\gx}(x \mid y)^{R}=\sum_{B\in\cB_{sub}(\gm)\atop \widetilde{B}=R} c_{\gx+1,B}\cdot(x
\mid y)^{\gx+\go(B^u)}.$$
\end{lem}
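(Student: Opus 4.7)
The plan is to build the identity by processing the filled boxes of $R$ one at a time in the column-word order of $R^u$, using the trivial algebraic identity
\[
(x_k \mid y)^{m}(x_k - y_l) \;=\; (x_k \mid y)^{m+1} \;+\; (y_{m+1} - y_l)(x_k \mid y)^{m},
\]
which is just the rewriting $x_k - y_l = (x_k - y_{m+1}) + (y_{m+1} - y_l)$. Each application of this identity to a factor $(x_{|a|} - y_{f_R(a)})$ splits the corresponding summand into two, and the binary choice at each entry of $R$---first versus second summand---will correspond precisely to declaring the matching entry of $B$ unbarred or barred. Expanding all these choices over the $N$ filled boxes of $R$ will give a sum indexed by $B \in \cB_{sub}(\gm)$ with $\wt B = R$.

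I would proceed by induction on $N$, the number of filled boxes of $R$. The base case $N = 0$ is immediate: both sides equal $(x\mid y)^\gx$, since the right-hand sum has only the empty summand, for which $c_{\gx+1,\es} = 1$ and $\go(\es^u) = 0$. For the inductive step, list the entries as $a_1, \ldots, a_N$ in column-word order, let $R'$ be $R$ with $a_N$ deleted, apply the hypothesis to $R'$, and then multiply the result by $(x_{|a_N|} - y_{f_R(a_N)})$. Each summand of the expansion for $R'$ is indexed by some $B'$ with $\wt{B'} = R'$ and carries the factor $(x_{|a_N|}\mid y)^{m}$ with $m = \gx_{|a_N|} + \go({B'}^u)_{|a_N|}$; applying the key identity produces two summands. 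The first corresponds to extending $B'$ by an unbarred $|a_N|$ at $a_N$: the power of $(x_{|a_N|}\mid y)$ goes up to $m+1$, matching $\gx + \go(B^u)$, and $c_{\gx+1,B} = c_{\gx+1,B'}$. The second corresponds to extending by a barred $|a_N|$: the power stays at $m$, while a new factor $(y_{m+1} - y_{f_R(a_N)})$ appears; since $m+1 = (\gx + 1 + \go(B^u_{<a_N}))_{|a_N|} = e_{\gx+1,B}(a_N)$ and $f_R(a_N) = f_B(a_N)$ (row, column, and value of $a_N$ are unchanged), this is exactly the new factor contributed to $c_{\gx+1,B}$.

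The only real obstacle is the bookkeeping of indices, namely verifying that the accumulated power $m$ of $(x_{|a_N|}\mid y)$ just before processing $a_N$ equals $(\gx + \go(B^u_{<a_N}))_{|a_N|}$. This is immediate from the inductive setup: because $a_N$ is last in column-word order, $B^u_{<a_N}$ coincides with the unbarred column word of $B'$, and each prior unbarred entry of value $|a_N|$ contributes exactly one to the power of $(x_{|a_N|}\mid y)$ accumulated so far. Summing both branches over all $B'$ then reproduces the sum over all $B \in \cB_{sub}(\gm)$ with $\wt{B} = R$, completing the induction.
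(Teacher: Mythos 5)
Your proof is correct and follows essentially the same route as the paper's: induction on the number of filled boxes, peeling off one entry and splitting its factor via $x_k-y_l=(x_k-y_{m+1})+(y_{m+1}-y_l)$, with the two branches corresponding to the unbarred/barred choice at that box. The only (immaterial) difference is that you remove the globally last entry in column-word order, while the paper removes the last entry of a fixed value $k$ in that order; both choices make the index bookkeeping $m+1=e_{\gx+1,B}(a)+ \;$(nothing) work out.
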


\ni In fact, we only need this lemma for $R\in\cR(\gm)$. We prove
this result more generally for $R\in\cR_{sub}(\gm)$ only to allow
for induction on the number of entries of $R$ (and thus allow for
the possibility that some boxes of $R$ are empty). We remark that
$\cR_{\text{sub}}(\gm)$ and $\cB_{\text{sub}}(\gm)$ were
introduced in this paper solely to allow for induction in this
proof.
\begin{proof} By induction on the number of entries in $R$.
Let $\fa$ be an entry of $R$ with value $k$, such that $R$ has no
entry of value $k$ in any column to the left of $\fa$. Let $\ga$
be the box containing $\fa$. Let $R'=R\setminus \fa$ be the the
reverse subtableau which results from removing $\fa$ from $R$.

If $B\in\cB_{sub}(\gm)$ is such that $\widetilde{B}=R$, then the
entry of $B$ in box $\ga$, which we denote by $B_{\ga}$, must
either be $k$ or $\ol{k}$. Let $B'$ denote $B\setminus B_\ga$. The
following three sets are in bijection with one another:
\begin{align*}
\{B\in\cB_{sub}(\gm)\mid \widetilde{B}=R,
B_{\ga}=k\}&\longleftrightarrow
\{B\in\cB_{sub}(\gm)\mid\widetilde{B}=R,
B_{\ga}=\ol{k}\}\\
&\longleftrightarrow\{D\in\cB_{sub}(\gm)\mid \widetilde{D}=R'\}.
\end{align*}
The first bijection simply adds a bar to $B_\ga$, and the second
bijection removes $B_\ga$ from $B$, mapping $B$ to $B'$. For
brevity, we denote $e_{B,\gx}(B_{\ga})$ and $f_{B}(B_{\ga})$ by
just $e(B_{\ga})$ and $f(B_{\ga})$ respectively for the remainder
of this proof. If $B_{\ga}$ is unbarred, then
\begin{equation*}
c_{\gx+1,B}=c_{\gx+1,B'}\quad \text{and}\quad
(x\mid y)^{\gx+\go(B^u)}=(x\mid
y)^{\gx+\go((B')^u)}(x_d-y_{e(B_{\ga})+1}).
\end{equation*}
On the other hand, if ${B_{\ga}}$ is barred, then
\begin{equation*}
c_{\gx+1,B}=c_{\gx+1,B'}(y_{e(B_{\ga})+1}-y_{f(B_{\ga})})\quad
\text{and}\quad
(x\mid y)^{\gx+\go(B^u)}=(x\mid y)^{\gx+\go((B')^u)}.
\end{equation*}
Thus,
\begin{align*}
\sum_{B\in\cB_{sub}(\gm)\atop \widetilde{B}=R} c_{\gx+1,B}&(x \mid y)^{\gx+\go(B^u)}\\
&= \sum_{B\in\cB_{sub}(\gm)\atop{\widetilde{B}=R\atop
{B_{\ga}=k}}} c_{\gx+1,B}(x \mid y)^{\gx+\go(B^u)} +
\sum_{B\in\cB_{sub}(\gm)\atop{\widetilde{B}=R\atop
{B_{\ga}=\ol{k}}}} c_{\gx+1,B}(x \mid
y)^{\gx+\go(B^u)}\\
&= \sum_{B\in\cB_{sub}(\gm)\atop{\widetilde{B}=R\atop
{B_{\ga}=k}}} c_{\gx+1,B'}(x \mid
y)^{\gx+\go((B')^u)}(x_{B_{\ga}}-y_{e(B_{\ga})+1})\\
&\hspace*{5em} +\sum_{B\in\cB_{sub}(\gm)\atop{\widetilde{B}=R\atop
B_{\ga}=\ol{k}}} c_{\gx+1,B'}(y_{e(B_{\ga})+1}-y_{f(B_{\ga})})(x
\mid y)^{\gx+\go((B')^u)}
\\
&= \sum_{B\in\cB_{sub}(\gm)\atop\widetilde{B}=R}
\left(c_{\gx+1,B'}(x \mid
y)^{\gx+\go((B')^u)}(x_{{B_{\ga}}}-y_{e(B_{\ga})+1})\right.\\
&\hspace*{5em}\left. +\quad
c_{\gx+1,B'}(y_{e(B_{\ga})+1}-y_{f(B_{\ga})})(x \mid
y)^{\gx+\go((B')^u)}\right)\\
&= \sum_{B\in\cB_{sub}(\gm)\atop\widetilde{B}=R} c_{\gx+1,B'}(x
\mid
y)^{\gx+\go((B')^u)}(x_{B_{\ga}}-y_{f(B_{\ga})})\\
%
%
&= \sum_{D\in\cB_{sub}(\gm)\atop\widetilde{D}=R'}
\left(c_{\gx+1,D}(x \mid
y)^{\gx+\go(D^u)}\right)(x_{B_{\ga}}-y_{f(B_{\ga})})\\
&=(x \mid y)^{\gx}(x \mid y)^{R'}(x_{B_{\ga}}-y_{f(B_{\ga})})\\
&=(x \mid y)^{\gx}(x \mid y)^{R}.
\end{align*}
\end{proof}

\begin{proof}[Proof of Lemma \ref{l.lra}]
\begin{align*}
a_{\gl+\gr}\xy s_{\gm}\xy &\stackrel{\tiny (a)}{=}\sum_{\gs\in
S_d}\sum_{R\in
\cR(\gm)}\sgn(\gs)(x\mid y)^{\gs(\gl+\gr)}(x\mid y)^R\\
%
%
&\stackrel{\tiny (b)}{=}\sum_{\gs\in S_d}\sum_{R\in
\cR(\gm)}\sum_{B\in \cB(\gm)\atop \widetilde{B}=R}
c_{\gs(\gl+\gr+1),B}\sgn(\gs)(x\mid y)^{\gs(\gl+\gr)+\go(B^u)}\\
&\stackrel{\tiny (c)}{=}\sum_{\gs\in S_d}\sum_{R\in
\cR(\gm)}\sum_{B\in \cB(\gm)\atop \widetilde{B}=R}
\sum_{H\in\cH(\gm)\atop\ol{H}=B}d_{\gs(\gl+\gr+1),H}\sgn(\gs)(x\mid
y)^{\gs(\gl+\gr)+\go(H^u)}\\
&=\sum_{\gs\in S_d}\sum_{H\in
\cH(\gm)}d_{\gs(\gl+\gr+1),H}\sgn(\gs)(x\mid
y)^{\gs(\gl+\gr)+\go(H^u)}\\
&\stackrel{\tiny (d)}{=}\sum_{\gs\in S_d}\sum_{H\in
\cH(\gm)}d_{\gs(\gl+\gr+1),\gs H}\sgn(\gs)(x\mid
y)^{\gs(\gl+\gr)+\go((\gs H)^u)}\\
&\stackrel{\tiny (e)}{=}\sum_{\gs\in S_d}\sum_{H\in
\cH(\gm)}d_{\gl+\gr+1,H}\sgn(\gs)(x\mid
y)^{\gs(\gl+\gr+\go(H^u))}\\
&=\sum_{\gs\in S_d}\sum_{B\in \cB(\gm)}
\sum_{H\in\cH(\gm)\atop\ol{H}=B}d_{\gl+\gr+1,H}\sgn(\gs)(x\mid
y)^{\gs(\gl+\gr+\go(H^u))}\\
&\stackrel{\tiny (c)}{=}\sum_{\gs\in S_d}\sum_{B\in \cB(\gm)}
c_{\gl+\gr+1,B}\sgn(\gs)(x\mid
y)^{\gs(\gl+\gr+\go(B^u))}\\
&\stackrel{\tiny (a)}{=}\sum_{B\in \cB(\gm)}
c_{\gl+\gr+1,B}a_{\gl+\gr+\go(B^u)}\xy
\stackrel{\tiny (f)}{=}\sum_{B\in \cB(\gm)}
c_{\gl,B}a_{\gl+\gr+\go(B^u)}\xy.
\end{align*}
Equality (a) follows from the definition of $a_\gm$, noting that
$\gs(\gl+\gr)+1=\gs(\gl+\gr+1)$; (b) follows from Lemma
\ref{l.pra_interm}, setting $S=R$ and $\gx=\gs(\gl+\gr)$; (c) from
(\ref{e.lr_coeff_hatted_coeff}), with $\gx=\gs(\gl+\gr)$; (e) from
(\ref{e.perm_properties}); and (f) from (\ref{e.star_nostar}). For
(d), we use the fact that for a fixed $\gs$ and arbitrary
decomposition $\gs=\gs_{i_1}\cdots\gs_{i_t}$, since each $s_{i_j}$
is an involution on $\cH(\gm)$, as $H$ runs over all elements of
$\cH(\gm)$, so does $\gs H$.
\end{proof}

\section{Proof of Lemma \ref{l.bad_guys_vanish}}\label{s.proof_lemma2}

By (\ref{e.lr_coeff_h_b}), Lemma \ref{l.bad_guys_vanish} is
equivalent to the following lemma, whose statement and proof
generalize arguments in \cite{Stem1}. For $H\in\cH(\gm)$ and $j$ a
nonnegative integer, define $H_{<j}$ to be the sub-hatted tableau
of $H$ consisting of the portion of $H$ lying in columns to the
right of $j$, and $H^u_{<j}=(H_{<j})^u$ (and similarly for
$H_{\leq j}$, $H_{>j}$, etc.).
\begin{lem}\label{l.bad_guys_vanish_hatted} Let $\gl\in\cP_n$. Then
\begin{equation}\label{e.bad_guys_vanish_hatted}
\sum d_{\gl+\gr+1,H} a_{\gl+\gr+\go(H^u)}\xy=0,
\end{equation}
the sum being over all $H\in \cH(\gm)$ for which
$\gl+\go(H^u_{\leq j})\not\in \cP_d$ for some $j$.
\end{lem}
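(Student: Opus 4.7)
The plan is to prove Lemma \ref{l.bad_guys_vanish_hatted} by a sign-reversing involution on the bad summands, generalizing Stembridge's argument for ordinary Schur functions \cite{Stem1}. I would first expand the alternant via $a_{\gl+\gr+\go(H^u)}\xy=\sum_{\sigma\in S_d}\sgn(\sigma)(x\mid y)^{\sigma(\gl+\gr+\go(H^u))}$, rewriting the left-hand side of (\ref{e.bad_guys_vanish_hatted}) as the double sum
\[
\sum_{H\text{ bad}}\sum_{\sigma\in S_d}\sgn(\sigma)\,d_{\gl+\gr+1,H}\,(x\mid y)^{\sigma(\gl+\gr+\go(H^u))}.
\]
For each bad $H$ I would extract canonical critical data: let $j_0=j_0(H)$ be the smallest column index for which $\gl+\go(H^u_{\leq j_0})\not\in\cP_d$, and let $i=i(H)$ be the smallest index at which the failure occurs, i.e., $(\gl+\go(H^u_{\leq j_0}))_i<(\gl+\go(H^u_{\leq j_0}))_{i+1}$. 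The minimality of $(j_0,i)$ is what makes this data canonical and suitable for anchoring an involution.

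The involution I would construct is a modification of the map $(H,\sigma)\mapsto(s_{i(H)}H,\sigma\gs_{i(H)})$, where $s_i$ is the involution on $\cH(\gm)$ from Section \ref{s.hatted_tableaux}. The sign reverses since $\sgn(\sigma\gs_i)=-\sgn(\sigma)$, and by Lemma \ref{l.invol_properties}(ii) the alternant exponent transforms as $\sigma\gs_i(\gl+\gr+\go((s_iH)^u))=\sigma\gs_i(\gl+\gr)+\sigma\go(H^u)$. The $d$-coefficient mismatch -- Lemma \ref{l.invol_properties}(vi) gives $d_{\gs_i\gx,s_iH}=d_{\gx,H}$ rather than $d_{\gl+\gr+1,s_iH}=d_{\gl+\gr+1,H}$ -- must be absorbed either by a compensating reindexing of $\sigma$ in the alternant expansion or by restricting $s_i$ to act only on a sub-region of $H$ confined by $j_0$. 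Pointwise cancellation of paired summands should then follow from Lemma \ref{l.invol_properties}(iv), (v), (vi).

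The main obstacle is twofold. First, preserving the critical data under the involution (that is, $j_0(\iota H)=j_0(H)$ and $i(\iota H)=i(H)$, so that $\iota^2=\text{id}$) is delicate because $s_i$ can reposition free entries of values $i$ and $i+1$ between columns within a single row, possibly disturbing the critical column; this likely forces restricting $s_i$ to the columns $>j_0$, where the minimality of $j_0$ confines the failing entries, or requires a still more refined construction. Second, the mismatch between $\gl+\gr+1$ and $\gs_i(\gl+\gr+1)$ -- a feature absent from the classical Schur setting, where the analogous reference vector is invariant -- must be handled carefully in the reindexing, and this is the key new ingredient needed to extend Stembridge's argument to the factorial case. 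Once these points are settled, the cancellation follows mechanically from Lemma \ref{l.invol_properties} and the preceding bookkeeping.
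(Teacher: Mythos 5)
Your proposal correctly identifies the overall strategy the paper uses (a sign-reversing pairing of bad terms anchored at the minimal critical column $j$ and minimal row index $i$, with the swap $s_i$ applied only to the part of $H$ strictly to the right of column $j$), but it stops exactly where the real work lies, and the two obstacles you flag are left genuinely unresolved. The missing key fact is this: by minimality of $j$ one has $(\gl+\go(H^u_{\leq j-1}))_i=(\gl+\go(H^u_{\leq j-1}))_{i+1}$, and column $j$ must contain an unhatted $i+1$ but no unhatted $i$, whence
\begin{equation*}
(\gl+\gr+1+\go(H^u_{\leq j}))_i=(\gl+\gr+1+\go(H^u_{\leq j}))_{i+1},
\end{equation*}
i.e.\ the accumulated reference vector through column $j$ is $\gs_i$-invariant. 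This single equality is what dissolves the ``mismatch between $\gl+\gr+1$ and $\gs_i(\gl+\gr+1)$'' you worry about: for a left-hatted entry $a$ in columns $>j$, its index $e_{\gl+\gr+1,H}(a)$ splits as the contribution of $H^u_{\leq j}$ plus the contribution of the columns between $j$ and $c(a)$, the first piece is fixed by $\gs_i$ and the second transforms by Lemma \ref{l.invol_properties}(iii), so the $e$-indices (and, as in Lemma \ref{l.invol_properties}(v), the $f$-indices of right-hatted entries) are preserved exactly, giving $d_{\gl+\gr+1,H}=d_{\gl+\gr+1,H^*}$ with no reindexing of anything. In particular the expansion over $\sigma\in S_d$ you begin with is unnecessary: since $\gs_i$ fixes $\gl+\gr+\go(H^u_{\leq j})$ and sends $\go(H^u_{>j})$ to $\go((H^*)^u_{>j})$, one gets $\gs_i(\gl+\gr+\go(H^u))=\gl+\gr+\go((H^*)^u)$, and antisymmetry of the alternant alone yields $a_{\gl+\gr+\go(H^u)}\xy=-a_{\gl+\gr+\go((H^*)^u)}\xy$, so paired terms cancel and self-paired terms vanish.

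Two further points you defer also require proof rather than hope. First, that the truncated swap produces a legal reverse hatted tableau: the only possible semistandardness failure is an entry of value $i$ in column $j$ with an $i$ immediately to its left whose value gets raised to $i+1$; this cannot happen because the entry below that left neighbor would have to be an $i+1$, making the pair locked (the reverse shape is essential here). Second, that $H^*$ is again a Bad Guy with the same critical pair $(j,i)$ — immediate since $H^*_{\leq j}=H_{\leq j}$ — which is what makes $H\mapsto H^*$ an involution. As written, your proposal is a correct plan whose central cancellation step is asserted (``must be absorbed\ldots'', ``once these points are settled'') rather than established, so it does not yet constitute a proof.
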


\begin{proof} We call $H\in \cH(\gm)$ for which $\gl+\go(H^u_{\leq
j})\not\in \cP_d$ for some $j$ a {\it Bad Guy}. Let $H$ be a Bad
Guy, and let $j$ be minimal such that $\gl+\go(H^u_{\leq
j})\not\in \cP_d$. Having selected $j$, let $i$ be minimal such
that $(\gl+\go(H^u_{\leq j}))_{i}<(\gl+\go(H^u_{\leq j}))_{i+1}$.
Since $(\gl+\go(H^u_{\leq j-1}))_{i}\geq(\gl+\go(H^u_{\leq
j-1}))_{i+1}$ (by the minimality of $j$), we must have
$(\gl+\go(H^u_{\leq j-1}))_{i}=(\gl+\go(H^u_{\leq j-1}))_{i+1}$,
and column $j$ of $H$ must have an unhatted $i+1$ but not an
unhatted $i$. Thus
\begin{equation}\label{e.tech3}
(\gl+\gr+1+\go(H^u_{\leq j}))_{i}=(\gl+\gr+1+\go(H^u_{\leq
j}))_{i+1}.
\end{equation}

Define $H^*$ to be the reverse tableau of shape $\gm$ obtained
from $H$ by replacing $H_{>j}$ by $s_i(H_{>j})$. Notice first that
$H^*$ is still semistandard. Indeed, since $\gs_i$ applied to
$H_{>j}$ can only change the values of its entries from $i$ to
$i+1$ and visa-versa, the only possible violation of
semistandardness of $H^*$ would occur under the following
scenario: (a) $H$ has an entry $\fa$ of value $i$ in column $j$
(which has to be either an $\wh{i}$ or $\hat{i}$), (b) $H$ has an
entry $\fb$ of value $i$ immediately to the left of $\fa$, and (c)
$s_i$ applied to $H_{>j}$ changes the value of $\fb$ to $i+1$.
However, this scenario is impossible. If (a) and (b) both hold,
then since $H$ is semistandard, the entry of $H$ immediately below
$\fb$ must have value $i+1$ (we remark that the {\it reverse}
shape of the tableau is critical here). Therefore the entry in box
$\fb$ is not a free entry of $H_{>j}$, so $s_i$ does not change
its value, i.e., (c) is violated. Notice second that $H^*$ is
still a Bad Guy, since $H^*_{\leq j}=H_{\leq j}$. Thus $H\mapsto
H^*$ gives an involution on the set of Bad Guys of $H(\gm)$.

We define maps $b^*_l:H^l\to (H^*)^l$ and $b^*_r:H^r\to (H^*)^r$,
as follows. If $a\in (H_{\leq j})^l$, then define $b^*_l(a)=a$. If
$a\in (H_{> j})^l$, then during the construction of $H^*$, in the
process of applying $s_i$ to $H_{>j}$, $a$ is mapped to $b_l(a)\in
(H_{>j})^l$. This same element $b_l(a)$, regarded as an element of
$(H^*)^l$, is denoted by $b^*_l(a)$. The map $b^*_r$ is defined
analogously.

We wish to show that for $a\in H^l$,
\begin{equation}\label{e.tech1}
e_{\gl+\gr+1,H}(a)=e_{\gl+\gr+1,H^*}(b^*_l(a)),
\end{equation}
and for $a\in H^r$,
\begin{equation}\label{e.tech2}
f_H(a)=f_{H^*}(b^*_r(a)).
\end{equation}
For $a\in (H_{\leq j})^l$ or $a\in (H_{\leq j})^r$ , both
(\ref{e.tech1}) and (\ref{e.tech2}) are obvious.  The proof of
(\ref{e.tech2}) for $a\in (H_{> j})^r$ follows in much the same
manner as the proof of Lemma \ref{l.invol_properties}(v).

It remains to prove (\ref{e.tech1}) for $a\in (H_{>j})^l$. For
such $a$, by Lemma \ref{l.invol_properties}(iii),
\begin{equation}\label{e.tech5}
\gs_i\go(_{j<}H^u_{<c(a)})=\go(s_i(_{j<}H^u_{<c(b_l(a))}))=\go(_{j<}(H^*)^u_{<c(b^*_l(a))}),
\end{equation}
where $_{l<}H_{<m}:=H_{>l}\cap H_{<m}$. By (\ref{e.tech3}),
\begin{equation}\label{e.tech4}
\gs_i(\gl+\gr+1+\go(H^u_{\leq j} ))=\gl+\gr+1+\go(H^u_{\leq
j})=\gl+\gr+1+\go((H^*)^u_{\leq j}).
\end{equation}

\ni Thus
\begin{align*}
e_{\gl+\gr+1,H}(a)&=\left(\gl+\gr+1+\go(H^u_{<c(a)}))\right)_{|a|}\\
&=\left(\gs_i(\gl+\gr+1+\go(H^u_{<c(a)}))\right)_{\gs_i|a|}\\
&\stackrel{(a)}{=}\left(\gs_i(\gl+\gr+1+\go(H^u_{<c(a)}))\right)_{|b^*_l(a)|}\\
&=\left(\gs_i(\gl+\gr+1+\go(_{j<}H^u_{<c(a)})+\go(H^u_{\leq j}))\right)_{|b^*_l(a)|}\\
&=\left(\gs_i(\go(_{j<}H^u_{<c(a)}))+\gs_i(\gl+\gr+1+\go(H^u_{\leq j}))\right)_{|b^*_l(a)|}\\
&\stackrel{(b)}{=}\left(\go(_{j<}(H^*)^u_{<c(b^*_l(a))})+\gl+\gr+1+\go((H^*)^u_{\leq
j})\right)_{|b^*_l(a)|}\\
\end{align*}
\begin{align*}
&=\left(\gl+\gr+1+\go((H^*)^u_{<c(b^*_l(a))})\right)_{|b^*_l(a)|}\\
&=e_{\gl+\gr+1,H^*}(b^*_l(a)).
\end{align*}
Equality (a) follows from Lemma \ref{l.invol_properties}(i); (b)
follows from (\ref{e.tech5}) and (\ref{e.tech4}). This completes
the proofs of (\ref{e.tech1}) and (\ref{e.tech2}).

Now (\ref{e.tech1}) and (\ref{e.tech2}) imply
\begin{equation}\label{e.d_hs_equals_d_h}
\begin{split}
d_{\gl+\gr+1,H}&=\prod_{a\in
H^{l}}y_{e_{\gl+\gr+1,H}(a)}\prod_{a\in
H^{r}}(-y_{f_H(a)})\\
&=\prod_{a\in H^{l}}y_{e_{\gl+\gr+1,H^*}(b^*_l(a))}\prod_{a\in
H^{r}}(-y_{f_{H^*}(b^*_l(a))})\\
&=\prod_{a\in (H^*)^{l}}y_{e_{\gl+\gr+1,H^*}(a)}\prod_{a\in
(H^*)^{r}}(-y_{f_{H^*}(a)})
=d_{\gl+\gr+1,H^*}.
\end{split}
\end{equation}

\ni By $\gs_i\go(H^u_{>j} )=\go((H^*)^u_{>j})$ and
(\ref{e.tech4}), $\gs_i(\gl+\gr+\go(H^u))=(\gl+\gr+\go((H^*)^u))$;
thus
\begin{equation}\label{e.as_equal}a_{\gl+\gr+\go(H^u)}\xy=-a_{\gl+\gr+\go((H^*)^u)}\xy.
\end{equation}
By (\ref{e.d_hs_equals_d_h}) and (\ref{e.as_equal}), the
contributions to (\ref{e.bad_guys_vanish_hatted}) of two Bad Guys
paired under the involution $H\mapsto H^*$ are negatives, and thus
cancel.  If a Bad Guy is paired with itself under $H\mapsto H^*$,
then (\ref{e.as_equal}) implies that its contribution to
(\ref{e.bad_guys_vanish_hatted}) is 0.
\end{proof}

\section{Bijection with Knutson-Tao Puzzles}\label{s.bij_knutson-tao}

In this section we give a weight-preserving bijection between
positive equivariant Littlewood-Richardson skew tableaux and
Knutson-Tao puzzles; thus both combinatorial objects compute
identical expressions for the structure constants $c\lmn$ and
$C\lmn$, $\gl,\gm,\gn\in\cP_{d,n}$. We begin by reviewing the
construction of Knutson-Tao puzzles.

\subsection{Puzzles}

A \textbf{puzzle piece} is one of the eight figures shown in
Figure \ref{f.apic2_pp}, each of whose edges has length $1$ unit.
Each puzzle piece is either an equilateral triangles or a rhombus,
together with a fixed orientation, and a labelling of each edge
with either a 1 or 0. The rightmost puzzle piece in Figure
\ref{f.apic2_pp} is called an \textbf{equivariant puzzle piece};
we color it cyan (or light gray on a black and white printer).
\begin{figure}[!h]
\input{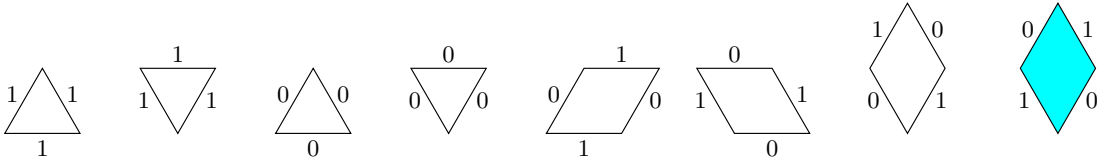}
\caption{\label{f.apic2_pp}The eight puzzle pieces}
\end{figure}

A \textbf{puzzle} $P$ is a partitioning of an equilateral triangle
of side length $n$ into puzzle pieces (see Figure
\ref{f.apic4_pex}). Implicit in this definition is that if two
puzzle pieces of $P$ share an edge, then both puzzle pieces must
have the same label on that edge.  The large equilateral triangle
forming the boundary of $P$ is called simply the \textbf{boundary}
of $P$ and denoted by $\bd P$. The northeast, northwest, and south
sides of the boundary are denoted by $\bd P_{\text{NE}}$, $\bd
P_{\text{NW}}$, and $\bd P_{\text{S}}$ respectively. One forms
three $n$-digit binary words by reading the labels along the three
sides of $\bd P$: the labels of $\bd P_{\text{NE}}$ are read from
top to bottom, the labels of $\bd P_{\text{NW}}$ from bottom to
top, and the labels of $\bd P_{\text{S}}$ from left to right. To
these three binary words we associate three partitions of
$\cP_{d,n}$ under the map $w\mapsto
(\eta_1,\ldots,\eta_d)\in\cP_{d,n}$, where $\eta_j$ is the number
of zeros of $w$ which lie to the right of the $j$-th one of $w$
from the left (for example, $0110001010\mapsto
(5,5,2,1)\in\cP_{4,10}$). Denote by $\cL\cP\lmnp$ the set of all
puzzles $P$ for which these three partitions are $\gl$, $\gm$, and
$\gn$, in that order.
\begin{figure}[!h]
\input{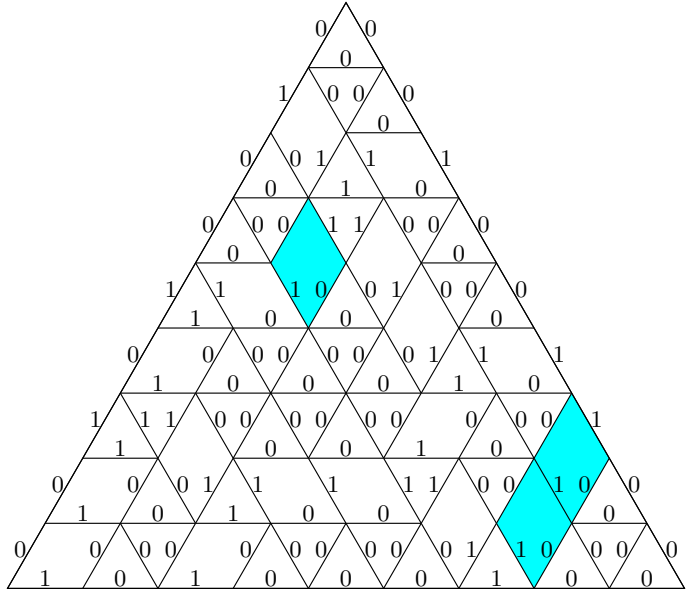}
\caption{\label{f.apic4_pex}A puzzle $P$, with $n=9$, $d=3$. The
$n$-digit binary words of the NE, NW, and S sides of the boundary
are $001001100$, $001010010$, and $101000100$ respectively. Thus
$P\in \cL\cP\lmnp$, where $\gl=(4,2,2)$, $\gm=(4,3,1)$, and
$\gn=(6,5,2)$.}
\end{figure}

For any equivariant puzzle piece of a puzzle $P$, draw two lines
from the center of the puzzle piece to $\bd P_{\text{S}}$: one
line $L_1$ parallel to $\bd P_{\text{NW}}$ and the other $L_2$
parallel to $\bd P_{\text{NE}}$ (see Figure \ref{f.apic7_pewt}).
The line segment $\bd P_{\text{S}}$ consists of $n$ edges of
puzzle pieces, which we number $1,2,\ldots,n$ from right to left.
The lines $L_1$ and $L_2$ cross $\bd P_{\text{S}}$ in the center
of two edges $e$ and $f$ respectively, where $e>f$. The
\textbf{factorial weight of the puzzle piece} is $y_e-y_f$, and
the \textbf{equivariant weight of the puzzle piece} is
$Y_{n+1-f}-Y_{n+1-e}$. Let $c_P$ denote the product of the
factorial weights of all the equivariant puzzle pieces of $P$ and
$C_P$ the product of the equivariant weights of all the
equivariant puzzle pieces of $P$. For example, in Figure
\ref{f.apic4_pex}, $c_P=(y_8-y_3)(y_3-y_2)(y_3-y_1)$ and
$C_P=(Y_7-Y_2)(Y_8-Y_7)(Y_9-Y_7)$.

\begin{figure}[!h]
\input{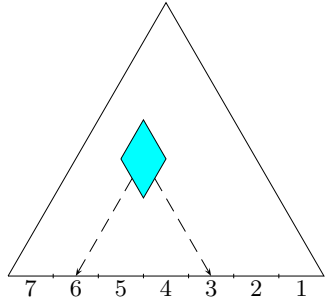}
\caption{\label{f.apic7_pewt} The equivariant puzzle piece has
factorial weight $y_6-y_3$ and equivariant weight $Y_5-Y_2$.}
\end{figure}
\vs

\begin{prop}\label{p.puzzles_tableaux_bij} There is a
weight preserving bijection $\gP:\cL\cP\lmnp\to\cL\cR\lmnp$.  By
weight-preserving, we mean that for $P\in\cL\cP\lmnp$, $c_P$ and
$c_{\gP(P)}$ are equal, and moreover are identical expressions;
and similarly for $C_P$ and $C_{\gP(P)}$.
\end{prop}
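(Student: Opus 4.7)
My plan is to construct $\gP$ explicitly by reading off a skew barred tableau from the $1$-edges and equivariant pieces of a puzzle, thereby generalizing Tao's non-equivariant ``proof without words'' \cite[Figure 11]{Va}. In the non-equivariant case one traces maximal paths of $1$-edges through $P$, each starting on $\bd P_{\text{NE}}$ and exiting on either $\bd P_{\text{S}}$ or $\bd P_{\text{NW}}$; the internal deflections of these paths record the entries of the ordinary LR tableau. In the equivariant setting, I would treat each equivariant rhombus as a ``crossing gadget'' at which two such paths temporarily swap; the crossing it introduces is exactly what will produce a barred entry of $\gP(P)$.

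Concretely, I would number the $1$-edges along $\bd P_{\text{NE}}$ from top to bottom by $1,\ldots,d$ and carry these labels along the paths as they travel through $P$. Each unit triangle or rhombus visited by a path of label $k$ corresponds to a box of $\gl * \gm$ filled with either $k$ or $\ol{k}$: an ordinary piece produces an unbarred entry, while a cell on which a $k$-labelled path passes through an equivariant rhombus produces a barred $\ol{k}$. The $\gl$ portion of the tableau is recovered from the part of the paths living in the NW corner of $P$, and the $\gm$ portion from the part in the lower strip; the partition $\gn$ then matches the unbarred content by counting which paths cross $\bd P_{\text{S}}$.

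With this construction in hand, the verification proceeds as follows. (i) Show that $\gP(P)$ has shape $\gl*\gm$ and unbarred content $\gn$, using conservation of $1$-edge labels across puzzle-piece boundaries. (ii) Verify that $\gP(P)$ is a skew barred tableau and that its unbarred column word is Yamanouchi, by translating local puzzle-piece compatibility into the weak-row/strict-column and Yamanouchi conditions. (iii) Verify positivity of $\gP(P)$ by showing that at the moment each barred entry is read by the column word, the geometry of paths forces $\go(L^u_{<\fa})_{|\fa|} > c(\fa) - r(\fa)$, giving condition 2 of Proposition \ref{p.positivity_crit}. (iv) Produce $\gP^{-1}$ by reversing the path-tracing, reconstructing each puzzle piece from a local window of the tableau; uniqueness of this local reconstruction gives injectivity, and the construction itself gives surjectivity.

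The main obstacle will be the factor-by-factor weight check. Given an equivariant rhombus whose diagonal rulings $L_1, L_2$ meet $\bd P_{\text{S}}$ at edges $e > f$, I must show that the corresponding barred entry $\fa \in \gP(P)|_\gm$ satisfies
\[
|\fa|' + \go(L^u_{<\fa})_{|\fa|} = e \quad\text{and}\quad |\fa|' + c(\fa) - r(\fa) = f,
\]
so that its contribution to $c_{\gP(P)}$ is exactly $y_e - y_f$. The interpretation that should make this transparent is that $f$ records the horizontal position at which the $|\fa|$-labelled path enters the rhombus, governed by the row and column of $\fa$ in $\gm$, while $e$ records where the path \emph{would} exit $\bd P_{\text{S}}$ absent any subsequent crossings, which is precisely $\go(L^u_{<\fa})_{|\fa|}$ shifted by $|\fa|'$. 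Once this identification is justified, most cleanly by an induction that sweeps the puzzle from right to left along $\bd P_{\text{S}}$ and tracks the running effect of each equivariant piece on the relevant path positions, equality of $c_P$ and $c_{\gP(P)}$ as formal expressions follows, and $C_P = C_{\gP(P)}$ is immediate by the specialization $y_k \mapsto -Y_{n+1-k}$ of Corollary \ref{c.c_C}.
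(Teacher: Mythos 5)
Your proposal follows essentially the same route as the paper: the paper's proof is precisely Tao's ``proof without words'' generalized to the equivariant setting, constructing $\gP$ by tracing the paths of $1$-triangles and rhombi from $\bd P_{\text{NE}}$ to $\bd P_{\text{S}}$, matching the column-strictness, Yamanouchi, and positivity conditions on $\gP(P)$ to corresponding geometric properties of the paths, and identifying each equivariant rhombus with a barred entry contributing the identical factor $y_e-y_f$. Your factor-by-factor weight check is in fact more explicit than the paper's, which simply asserts the correspondence from the generic figure.
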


\begin{proof}
The bijection $\gP$, illustrated in Figure \ref{f.apic1_p_t},
generalizes Tao's `proof without words' of the bijection between
puzzles and tableaux in the nonequivariant setting \cite[Figure
11]{Va}. The large triangle in the center of Figure
\ref{f.apic1_p_t} represents a generic puzzle $P$.  The
equivariant Littlewood-Richardson skew tableau $\gP(P)$ is the
skew barred tableau formed by placing the Young tableau on the
top-right of Figure \ref{f.apic1_p_t} above and to the right of
the reverse barred tableau on the top left of the figure.

We list three properties of any $L\in\cL\cR\lmnp$:
\begin{itemize}
\item[(a)] $L|_\gm$ is column strict;

\item[(b)] the unbarred column word of $L$ is Yamanouchi; and

\item[(c)] $c_L>0$.
\end{itemize}

Let $P\in\cL\cP\lmnp$. For $i\in\{1,\ldots,d\}$ (where $d=4$ in
Figure \ref{f.apic1_p_t}), there is a path $P_i$ consisting of
1-triangles and rhombi which begins on $\bd P_{\text{NE}}$, moves
only west or southwest, and ends on $\bd P_{\text{S}}$
(see Figure \ref{f.apic1_p_tb}). Each path $P_i$ has
\textbf{segments} $P_{i,j}$ consisting of the consecutive rhombi
lying to the right of an upward pointing 1-triangle and to the
left of either a downward pointing 1 triangle or $\bd
P_{\text{NE}}$. We list three properties of $P$:
\begin{itemize}
\item[(a)'] for $i=2,\ldots,d$ and all $j$, the distance from the
leftmost edge of $P_{i,j}$ to $\bd P_{\text{NE}}$ is greater than
or equal to the distance from the leftmost edge of $P_{i-1,j}$ to
to $\bd P_{\text{NE}}$;

\item[(b)'] the interiors of the $P_i$ do not touch; and

\item[(c)'] the interiors of all equivariant puzzle pieces lie
above $\bd P_{\text{S}}$.
\end{itemize}

Given any $P\in\cL\cP\lmnp$, Figure \ref{f.apic1_p_t} shows how to
construct a skew barred tableau $\gP(P)$. Properties (a)', (b)',
and (c)' of $P$ imply properties (a), (b), and (c) of $\gP(P)$
respectively. Conversely, given any $L\in\cL\cR\lmnp$, Figure
\ref{f.apic1_p_t} shows how to construct a puzzle $\gP^{-1}(L)$.
Properties (a), (b), and (c) of $L$ ensure that the puzzle
$\gP^{-1}(L)$ can be constructed, and imply that it satisfies
(a)', (b)', and (c)' respectively. Uniqueness is clear.

To each equivariant puzzle piece of $P$ there corresponds a barred
entry of $\gP(P)$, and they both determine the same factor
$y_i-y_j$ of $c_P$ and $c_{\gP(P)}$ respectively. Therefore $\gP$
is weight preserving.
\end{proof}

\ni Using Corollary \ref{c.lr_coeff} and Proposition
\ref{p.puzzles_tableaux_bij}, we obtain a new proof of the
following theorem, which is due to Knutson and Tao \cite{Kn-Ta}.

\begin{thm}[Knutson-Tao]\label{t.knutson_tao}
$\displaystyle c_{\gl,\gm}^{\gn}=\sum_{P\in \cL\cP\lmnp}c_P$ and
$\displaystyle C_{\gl,\gm}^{\gn}=\sum_{P\in \cL\cP\lmnp}C_P$.
\end{thm}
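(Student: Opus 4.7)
The plan is to derive both formulas as an immediate consequence of results already assembled. By Corollary \ref{c.lr_coeff}, we have the factorial identity
\begin{equation*}
c\lmn = \sum_{L\in\cL\cR\lmnp} c_L,
\end{equation*}
and by Corollary \ref{c.equiv_lr_coeff} we have the equivariant identity
\begin{equation*}
C\lmn = \sum_{L\in\cL\cR\lmnp} C_L.
\end{equation*}
So the task reduces to re-indexing each right-hand side by $\cL\cP\lmnp$ instead of $\cL\cR\lmnp$.

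For this, I would invoke Proposition \ref{p.puzzles_tableaux_bij}, which supplies a bijection $\gP:\cL\cP\lmnp\to\cL\cR\lmnp$ that is weight-preserving in the strong sense: for each $P\in\cL\cP\lmnp$, the polynomials $c_P$ and $c_{\gP(P)}$ are literally the same expression, as are $C_P$ and $C_{\gP(P)}$. Applying the change of variable $L=\gP(P)$ gives
\begin{equation*}
\sum_{L\in\cL\cR\lmnp} c_L \;=\; \sum_{P\in\cL\cP\lmnp} c_{\gP(P)} \;=\; \sum_{P\in\cL\cP\lmnp} c_P,
\end{equation*}
and the identical computation with $C$ in place of $c$ handles the equivariant version. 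Combining these with the two corollaries above yields the theorem.

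There is essentially no obstacle left at this stage: all the combinatorial and algebraic labor has been done in proving Theorem \ref{t.lr_coeff} (hence Corollary \ref{c.lr_coeff}), in passing to the equivariant specialization via Corollary \ref{c.c_C}, and in constructing the bijection $\gP$ with its matching-weight property. The final step is a formal reindexing of a finite sum.
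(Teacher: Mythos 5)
Your proposal is correct and matches the paper's argument exactly: the paper likewise obtains the theorem by combining Corollary \ref{c.lr_coeff} (and its equivariant analogue, Corollary \ref{c.equiv_lr_coeff}) with the weight-preserving bijection $\gP$ of Proposition \ref{p.puzzles_tableaux_bij}, so the only remaining step is the reindexing of the finite sum that you carry out.
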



\begin{figure}[!p]
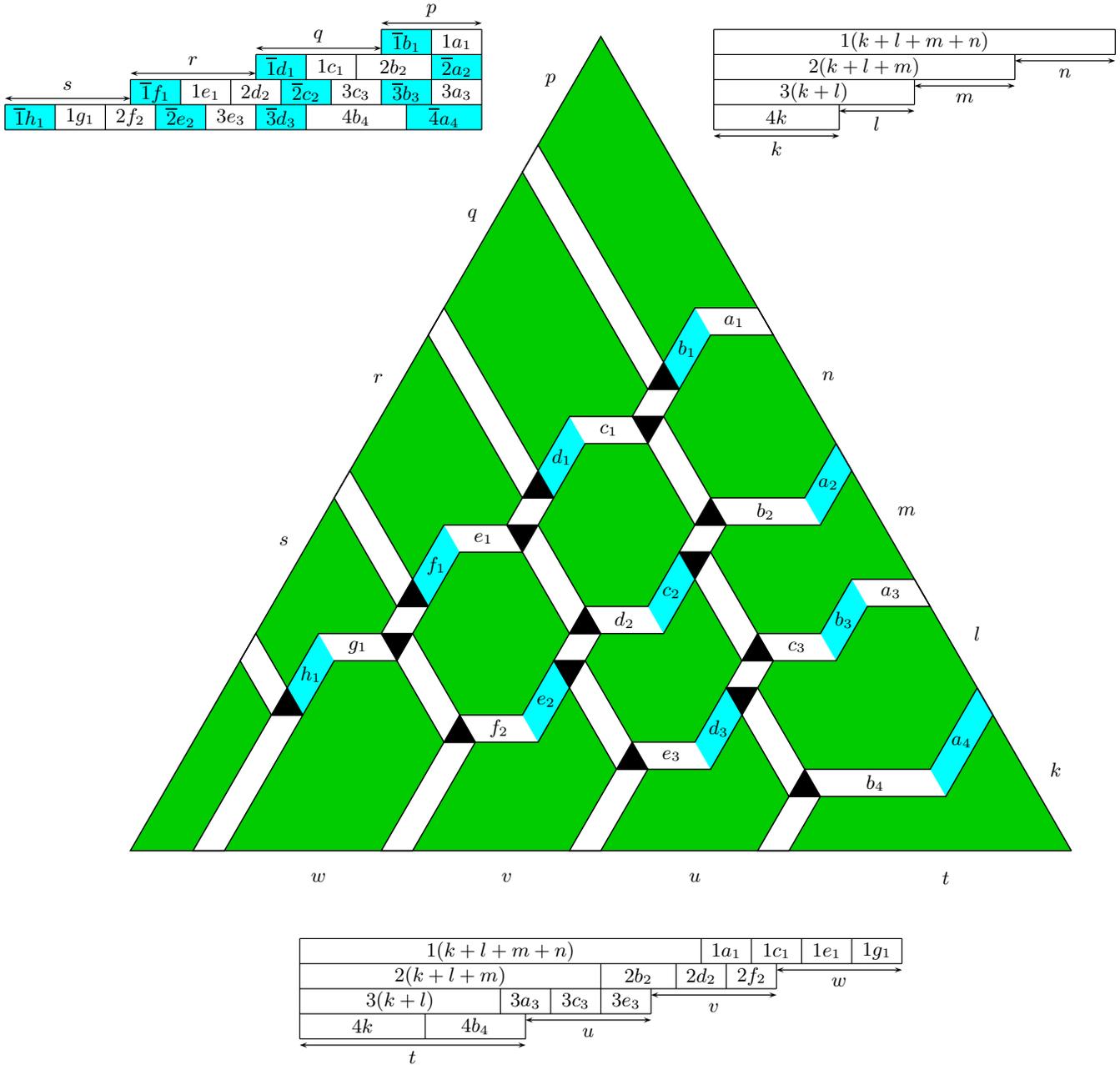

\include{apic1_p_t} \caption{\label{f.apic1_p_t} A generic puzzle
$P$ (center), and its associated positive equivariant
Littlewood-Richardson skew tableau $\gP(P)$ (top-right, top-left).
In $P$, black represents regions of 1 triangles, green (dark gray)
represents regions of 0 triangles, white represents regions of
non-equivariant rhombi, and cyan (light gray) represents regions
of equivariant rhombi.}
\end{figure}
\vs

\begin{figure}[!p]
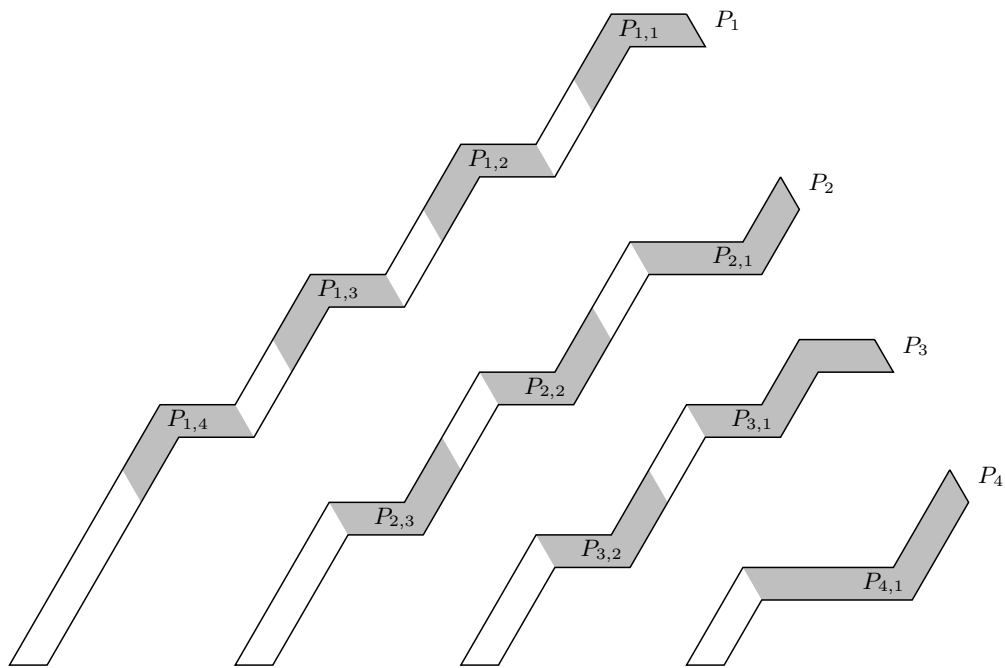

\include{apic1_p_tb} \caption{\label{f.apic1_p_tb} The paths
$P_i$, $i=1,\ldots,4$, of the puzzle $P$ of Figure
\ref{f.apic1_p_t}.  The segments $P_{i,j}$ of each path are
shaded. The segments may contain two types of puzzle pieces:
equivariant puzzle pieces and rhombi with horizontal 0-edges.}
\end{figure}
\vs

\begin{figure}[!ht]
\input{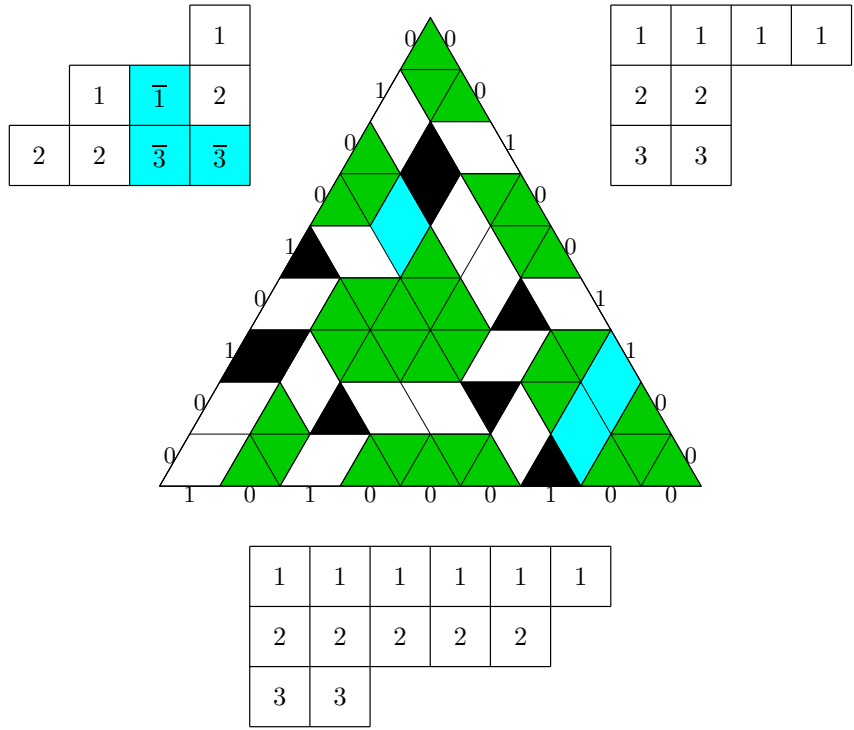}
\caption{\label{f.apic5_pexc} The puzzle $P$ of Figure
\ref{f.apic4_pex} is redrawn using the coloring scheme described
in Figure \ref{f.apic1_p_t}. All edge labels other than those on
the boundary of the puzzle are suppressed. $\gP(P)$ is also
shown.}
\end{figure}

\begin{figure}[!h]
$
\begin{array}{c@{\hspace{10em}}c}
\input{apic6a_plr}
&
\input{apic6b_plr}
\end{array}
$ \caption{\label{f.apic6_plr} All elements of $\cL\cP\lmnp$ and
$\cL\cR\lmnp$, for $d=2$, $n=4$, $\gl=(1,1)$, $\gm=(2,1)$, and
$\gn=(2,1)$.  We have
$c_{\gl,\gm}^{\gm}=(y_4-y_3)(y_2-y_1)+(y_3-y_1)(y_2-y_1)$ and
$C_{\gl,\gm}^{\gm}=(Y_2-Y_1)(Y_4-Y_3)+(Y_4-Y_2)(Y_4-Y_3)$.}
\end{figure}

\clearpage

\subsection{Trapezoid Puzzles}
We next extend $\gP$ to a bijection onto $\cL\cR\lmn$. To do so,
we increase the size of the domain of $\gP$ by defining
generalizations of puzzles, which we call {\it trapezoid puzzles}.
The extention of $\gP$, which we also denote by $\gP$, allows us
to view nonnegativity from the point of view of trapezoid puzzles
rather than equivariant Littlewood-Richardson skew tableaux.

Consider the isosceles trapezoid $T$ formed by placing an
equilateral triangle of side length $n$ on top of a rhombus of
side length $n$ (see Figure \ref{f.apic8_pn}). The boundary of
$T$, denoted by of $\bd T$, is divided into 5 parts: northeast,
northwest, east, west, and south (denoted by $\bd T_{\text{NE}}$,
$\bd T_{\text{NW}}$, $\bd T_{\text{E}}$, $\bd T_{\text{W}}$, and
$\bd T_{\text{S}}$). The northeast and northwest boundaries of $T$
are the northeast and northwest boundaries of the equilateral
triangle, and the east, west, and south boundaries of $T$ are the
east, west, and south boundaries of the rhombus. A
\textbf{trapezoid puzzle} is a partitioning of $T$ into puzzle
pieces in such a way that all labels of $\bd T_{\text{E}}$ and
$\bd T_{\text{W}}$ are 0's. Denote by $\cL\cP\lmn$ the set of all
trapezoid puzzles whose n-digit binary words read from $\bd
T_{\text{NE}}$, $\bd T_{\text{NW}}$, and $\bd T_{\text{S}}$
correspond to partitions $\gl$, $\gm$, and $\gn$ respectively.

Let $D$ denote the line segment forming the south border of the
triangle (and the north border of the rhombus). For any
equivariant puzzle piece of a trapezoid puzzle $P$, draw two lines
from the center of the puzzle piece to $D$: one line $L_1$
parallel to $\bd T_{\text{NW}}$ and the other $L_2$ parallel to
$\bd T_{\text{NE}}$. The lines $L_1$ and $L_2$ cross $D$ at $e-.5$
and $f-.5$ units from its right endpoint, respectively ($e$, $f$
are both integers). If the equivariant puzzle piece lies above
$D$, then $e>f$; if it lies below $D$, then $e<f$; if it is
bisected by $D$, then $e=f$. The \textbf{factorial weight of the
puzzle piece} is $y_e-y_f$, and the \textbf{equivariant weight of
the puzzle piece} is $Y_{n+1-f}-Y_{n+1-e}$. Let $c_P$ denote the
product of the factorial weights of all the equivariant puzzle
pieces of $P$ and $C_P$ the product of the equivariant weights of
all the equivariant puzzle pieces of $P$.

A puzzle can be viewed as a trapezoid puzzle all of whose
1-triangles lie above $D$. In this way $\cL\cP\lmnp$ may be viewed
as a subset of $\cL\cP\lmn$, and the inclusion is
weight-preserving. A diagram very similar to Figure
\ref{f.apic1_p_t}, but for trapezoid puzzles instead of puzzles,
can be used to prove

\begin{prop}\label{p.trap_puzzles_tableaux_bij}
The bijection $\gP:\cL\cP\lmnp\to\cL\cR\lmnp$ of Proposition
\ref{p.puzzles_tableaux_bij} extends to a weight preserving
bijection $\gP:\cL\cP\lmn\to\cL\cR\lmn$.
\end{prop}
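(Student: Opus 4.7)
The plan is to extend $\gP$ by performing the exact same construction as in the proof of Proposition \ref{p.puzzles_tableaux_bij}, but inside the trapezoid $T$ rather than the triangle. Given $P\in\cL\cP\lmn$, I would again define paths $P_1,\ldots,P_d$, each starting on $\bd T_{\text{NE}}$ and moving only west or southwest through $1$-triangles and rhombi. Because the labels along $\bd T_{\text{E}}$ and $\bd T_{\text{W}}$ are forced to be $0$ by the definition of a trapezoid puzzle, each path $P_i$ cannot exit through the east or west sides of the rhombus and must therefore terminate on $\bd T_{\text{S}}$, exactly as in the triangular case. The segments $P_{i,j}$, the reverse barred subtableau $\gP(P)|_\gm$, and the Young-tableau portion of $\gP(P)$ are then read off from the paths by the very recipe illustrated in Figure \ref{f.apic1_p_t}, with each equivariant rhombus producing a barred entry and each non-equivariant rhombus an unbarred one.

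I would then verify the bijection by checking that properties (a) and (b) from the proof of Proposition \ref{p.puzzles_tableaux_bij} — column strictness of $L|_\gm$ and the Yamanouchi condition on $L^u$ — remain equivalent to properties (a)' and (b)' of the paths, precisely as before, since both are local conditions that never interact with the side $D$ separating triangle from rhombus. The third property (c)', that every equivariant piece lie above $D$, is the only one distinguishing $\cL\cP\lmnp$ from $\cL\cP\lmn$; once it is dropped, equivariant rhombi are free to sit above, on, or below $D$. Correspondingly, on the tableau side, dropping the positivity condition $c_L>0$ enlarges $\cL\cR\lmnp$ to $\cL\cR\lmn$: a barred entry $\fa$ with $\go(L^u_{<\fa})_{|\fa|}=c(\fa)-r(\fa)$ (resp.\ $<$) corresponds to an equivariant piece bisected by $D$ (resp.\ lying strictly below $D$).

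For weight preservation, the definitions of $c_P$ and $C_P$ for trapezoid puzzles were formulated so that each equivariant piece contributes the same factor $y_e-y_f$ (respectively $Y_{n+1-f}-Y_{n+1-e}$) as the corresponding barred entry of $\gP(P)$, regardless of which side of $D$ the piece sits on. Because the analysis of where the parallels $L_1$ and $L_2$ meet $D$ is local to each equivariant rhombus, the positional check carried out inside the triangle in Proposition \ref{p.puzzles_tableaux_bij} goes through unchanged for trapezoid pieces; only the sign of $e-f$ may now differ.

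The main obstacle will be the diagrammatic bookkeeping: drawing a trapezoidal analog of Figure \ref{f.apic1_p_t} and verifying, row by row, that the portion of each path $P_i$ descending into the rhombic part of $T$ is precisely what is needed to encode the data of $L|_\gm$ when $c_L=0$ or when a factor $y_e-y_f$ of $c_L$ has $e\le f$. In particular one must rule out new path configurations inside the rhombus that could either yield an invalid skew barred tableau or produce the same tableau from two distinct trapezoid puzzles. Because $\bd T_{\text{E}}$ and $\bd T_{\text{W}}$ are entirely labelled by $0$, the local path-splitting analysis in the rhombus reduces to exactly the local moves already handled in the triangle, so I expect this extension to be a somewhat lengthy but essentially routine variant of the proof of Proposition \ref{p.puzzles_tableaux_bij}.
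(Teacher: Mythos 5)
Your proposal is correct and follows essentially the same route as the paper, which itself only sketches the argument by appealing to a trapezoidal analogue of Figure \ref{f.apic1_p_t} (imagining the paths ``stretched'' so that 1-triangles may be pushed below the line $D$); your identification of pieces above, on, or below $D$ with factors $y_e-y_f$ having $e>f$, $e=f$, or $e<f$ matches the paper's observations exactly. If anything, your write-up is more explicit than the paper's about why the all-zero labels on $\bd T_{\text{E}}$ and $\bd T_{\text{W}}$ force each path to terminate on $\bd T_{\text{S}}$.
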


One makes the following two observations: if $P\in
\cL\cP\lmn\setminus \cL\cP\lmnp$, i.e., if $P$ has a 1-triangle
lying below $D$, then (i) at least one equivariant puzzle piece
must be bisected by $D$, and (ii) stronger, at least one
equivariant puzzle piece corresponding to a bottom row element of
$\gP(P)$ must be bisected by $D$. Although we have not included a
diagram such as Figure \ref{f.apic1_p_t} for trapezoid puzzles,
these two statements can nevertheless be seen in Figure
\ref{f.apic1_p_t} itself, if one imagines placing the required
side length $n$ rhombus underneath the figure, and `stretching'
the `paths', expanding without breaking the `loops', so that the
1-triangles are pushed into the rhombus below. Statement (i)
implies that $c_P=C_P=0$. Combined with Proposition
\ref{p.trap_puzzles_tableaux_bij}, it gives a simpler proof of
Proposition \ref{p.nonnegativity}. Statement (ii) proves that
Lemma \ref{l.tech_pos_crit}(iii) implies Lemma
\ref{l.tech_pos_crit}(i).  That Lemma \ref{l.tech_pos_crit}(i)
implies Lemma \ref{l.tech_pos_crit}(ii) can also be seen easily by
considering trapezoid puzzles (or puzzles).

By Theorem \ref{t.knutson_tao} and Proposition
\ref{p.trap_puzzles_tableaux_bij}, we have

\begin{cor}
$\displaystyle c_{\gl,\gm}^{\gn}=\sum_{P\in \cL\cP\lmn}c_P$ and
$\displaystyle C_{\gl,\gm}^{\gn}=\sum_{P\in \cL\cP\lmn}C_P$.
\end{cor}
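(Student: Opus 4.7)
The plan is to obtain this corollary by transporting the formulas
$c\lmn=\sum_{L\in\cL\cR\lmn}c_L$ (Theorem \ref{t.lr_coeff}) and
$C\lmn=\sum_{L\in\cL\cR\lmn}C_L$ (Corollary \ref{c.equiv_lr_coeff})
across the weight-preserving bijection $\gP:\cL\cP\lmn\to\cL\cR\lmn$
furnished by Proposition \ref{p.trap_puzzles_tableaux_bij}.
Concretely, I would begin by invoking Theorem \ref{t.lr_coeff}, which
sums $c_L$ over all of $\cL\cR\lmn$ (not only the positive subset);
then rewrite each $L$ as $\gP(P)$ for the unique $P\in\cL\cP\lmn$
mapped to it, and apply the weight-preserving property
$c_{\gP(P)}=c_P$ term by term. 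The analogous argument, using
Corollary \ref{c.equiv_lr_coeff} and $C_{\gP(P)}=C_P$, handles the
equivariant statement.

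No further verification is needed for the $P\mapsto \gP(P)$ step once
we know $\gP$ is a bijection and each factor $y_i-y_j$ (respectively
$Y_j-Y_i$) read off from an equivariant puzzle piece agrees with the
corresponding factor read off from its associated barred entry. Thus
the main content has already been deposited in
Proposition \ref{p.trap_puzzles_tableaux_bij}.

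The only real subtlety is the sum range. Theorem \ref{t.knutson_tao}
stated the identities only over the positive set $\cL\cP\lmnp$, and
one might worry that extending to $\cL\cP\lmn$ introduces spurious
contributions. This is harmless by observation (i) following
Proposition \ref{p.trap_puzzles_tableaux_bij}: any
$P\in\cL\cP\lmn\setminus\cL\cP\lmnp$ has an equivariant puzzle piece
bisected by $D$, so the factor $y_e-y_f$ with $e=f$ forces $c_P=0$,
and likewise $C_P=0$. Equivalently, under $\gP$ such a $P$
corresponds to $L\in\cL\cR\lmn\setminus\cL\cR\lmnp$, for which
$c_L=C_L=0$ by Propositions \ref{p.nonnegativity} and
\ref{p.positivity_crit}. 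Either viewpoint shows that the extension of
the sum from $\cL\cP\lmnp$ to $\cL\cP\lmn$ adds only zero terms, so
both identities of the corollary follow at once.
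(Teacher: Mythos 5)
Your proposal is correct and matches the paper's (one-line) derivation in substance: the corollary is an immediate consequence of the weight-preserving bijection $\gP:\cL\cP\lmn\to\cL\cR\lmn$ of Proposition \ref{p.trap_puzzles_tableaux_bij} together with the previously established tableau formulas, with the passage from the positive sets to the full sets justified because the added terms vanish. Whether one phrases this as transporting Theorem \ref{t.lr_coeff} across $\gP$ (as you do) or as combining Theorem \ref{t.knutson_tao} with observation (i) (as the paper does) is an immaterial difference.
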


\vfill \eject

\begin{figure}[!ht]
\input{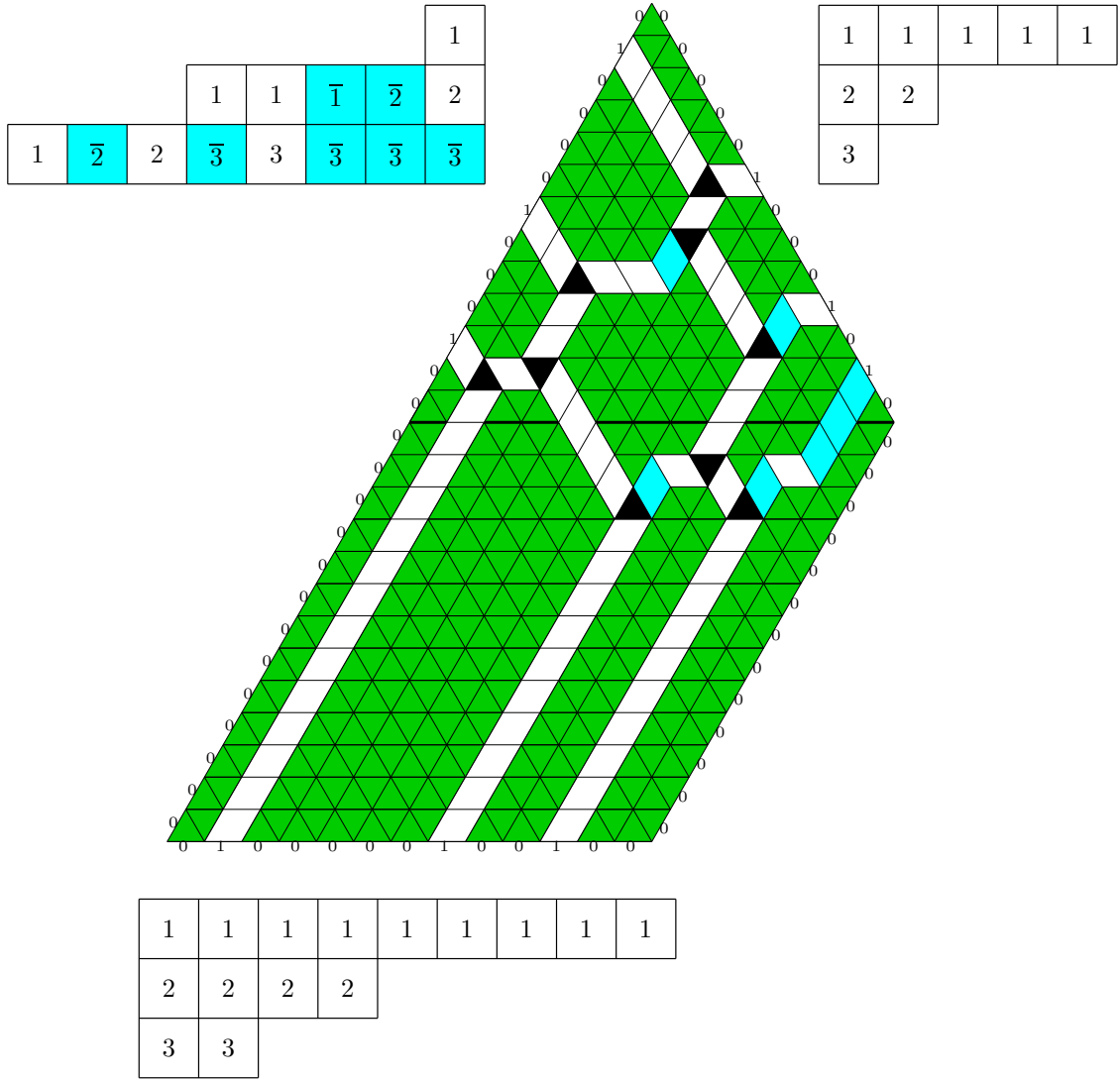}
\caption{\label{f.apic8_pn} A trapezoid puzzle $P$ (center), for
$d=3$, $n=13$, $\gl=(5,2,1)$, $\gm=(8,5,1)$, $\gn=(9,4,2)$; and
the corresponding equivariant Littlewood-Richardson skew tableau
$\gP(P)$ (top-right, top-left). The line $D$ separating the
triangle from the rhombus is darkened. The fact that 1-triangles
lie below $D$ implies that $c_P=C_P=0$. Indeed,
$c_{P}=(y_9-y_4)(y_5-y_2)(y_2-y_1)(y_2-y_2)(y_2-y_3)(y_3-y_5)(y_6-y_8)=0$,
and
$C_{P}=(Y_{10}-Y_5)(Y_{12}-Y_9)(Y_{13}-Y_{12})(Y_{12}-Y_{12})(Y_{11}-Y_{12})(Y_9-Y_{11})(Y_6-Y_8)=0$.}
\end{figure}

\eject

\providecommand{\bysame}{\leavevmode\hbox
to3em{\hrulefill}\thinspace}
\providecommand{\MR}{\relax\ifhmode\unskip\space\fi MR }
\providecommand{\MRhref}[2]{%
  \href{http://www.ams.org/mathscinet-getitem?mr=#1}{#2}
} \providecommand{\href}[2]{#2}


\end{document}